\renewcommand{\eqref}[1]{\hyperref[#1]{(\ref{#1})}}
\newlist{enumlist}{enumerate}{1}
\setlist[enumlist]{labelindent=0cm,label=\arabic*.,ref=\arabic*,labelwidth=2.5ex,labelsep=0.5ex,leftmargin=3ex,align=left,topsep=0.5ex,itemsep=1ex,parsep=1ex}
\newlist{itemlist}{itemize}{1}
\setlist[itemlist]{labelindent=0cm,label=$\bullet$,labelwidth=2.5ex,labelsep=0.5ex,leftmargin=3ex,align=left,topsep=0.5ex,itemsep=1ex,parsep=1ex}
\numberwithin{equation}{section}
\theoremstyle{definition}\newtheorem{definition}{Definition}[section]
\newtheorem*{definition*}{Definition}
\newtheorem{remark}[definition]{Remark}
\newtheorem*{example*}{Example}
\newtheorem*{examples*}{Examples}}
\newtheorem{proposition}[definition]{Proposition}
\newtheorem{lemma}[definition]{Lemma}
\newtheorem{theorem}[definition]{Theorem}
\newtheorem{corollary}[definition]{Corollary}
\newtheorem{letterthm}{Theorem}
\theoremstyle{definition}}
\newcommand{\C}{\mathbb{C}}
\newcommand{\cC}{\mathcal{C}}
\newcommand{\eps}{\varepsilon}
\newcommand{\al}{\alpha}
\newcommand{\be}{\beta}
\newcommand{\ot}{\otimes}
\newcommand{\recht}{\rightarrow}
\newcommand{\Z}{\mathbb{Z}}
\newcommand{\vphi}{\varphi}
\newcommand{\id}{\mathord{\text{\rm id}}}
\newcommand{\N}{\mathbb{N}}
\newcommand{\Om}{\Omega}
\newcommand{\cD}{\mathcal{D}}
\newcommand{\si}{\sigma}
\newcommand{\R}{\mathbb{R}}
\newcommand{\cG}{\mathcal{G}}
\newcommand{\cF}{\mathcal{F}}
\newcommand{\actson}{\curvearrowright}
\newcommand{\cA}{\mathcal{A}}
\newcommand{\cB}{\mathcal{B}}
\newcommand{\cU}{\mathcal{U}}
\newcommand{\cR}{\mathcal{R}}
\newcommand{\dis}{\displaystyle}
\newcommand{\cV}{\mathcal{V}}
\newcommand{\cE}{\mathcal{E}}
\newcommand{\Aut}{\operatorname{Aut}}
\newcommand{\cS}{\mathcal{S}}
\newcommand{\cL}{\mathcal{L}}
\newcommand{\mutil}{\widetilde{\mu}}
\newcommand{\Zai}{Z^1_{\text{\rm ai}}}
\newcommand{\HNN}{\operatorname{HNN}}
\newcommand{\supp}{\operatorname{supp}}
\newcommand{\per}{\operatorname{per}}
\begin{document}

\begin{center}
{\boldmath\LARGE\bf Ergodicity and type of nonsingular Bernoulli actions}

\bigskip

{\sc by Michael Bj\"{o}rklund\footnote{Chalmers, Department of Mathematics, Gothenburg (Sweden).\\ E-mail: micbjo@chalmers.se. M.B. was supported by GoCas Young Excellence grant 11423310}, Zemer Kosloff\footnote{Hebrew University of Jerusalem, Einstein Institute of Mathematics, Jerusalem (Israel).\\ E-mail: zemer.kosloff@mail.huji.ac.il. The research of Z.K. was partially supported by ISF grant No. 1570/17.} and Stefaan Vaes\footnote{\noindent KU~Leuven, Department of Mathematics, Leuven (Belgium).\\ E-mail: stefaan.vaes@kuleuven.be. S.V. is supported by European Research Council Consolidator Grant 614195 RIGIDITY, and by long term structural funding~-- Methusalem grant of the Flemish Government.}}
\end{center}

\begin{abstract}\noindent
\noindent We determine the Krieger type of nonsingular Bernoulli actions $G \actson \prod_{g \in G} (\{0,1\},\mu_g)$. When $G$ is abelian, we do this for arbitrary marginal measures $\mu_g$. We prove in particular that the action is never of type II$_\infty$ if $G$ is abelian and not locally finite, answering Krengel's question for $G = \Z$. When $G$ is locally finite, we prove that type II$_\infty$ does arise. For arbitrary countable groups, we assume that the marginal measures stay away from $0$ and $1$. When $G$ has only one end, we prove that the Krieger type is always I, II$_1$ or III$_1$. When $G$ has more than one end, we show that other types always arise. Finally, we solve the conjecture of \cite{VW17} by proving that a group $G$ admits a Bernoulli action of type III$_1$ if and only if $G$ has nontrivial first $L^2$-cohomology.
\end{abstract}

\section{Introduction}

The Bernoulli actions $G \actson (X,\mu) = (\{0,1\},\mu_0)^G$ of a countable group $G$, given by $(g \cdot x)_h = x_{g^{-1} h}$, play a key role in ergodic theory, measurable group theory and operator algebras. By construction, $\mu$ is a $G$-invariant probability measure. Replacing $\mu$ by an arbitrary product measure $\mu = \prod_{g \in G} \mu_g$, one obtains a very natural family of non measure preserving $G$-actions. Although this construction is straightforward, it turned out to be a very difficult problem to decide when $G \actson (X,\mu)$ is ergodic and, in that case, to determine the Krieger type of the action.

The first results in this direction were providing examples for the group $G = \Z$, through inductive constructions of probability measures $(\mu_n)_{n \in \Z}$ on $\{0,1\}$. It was thus proven in \cite{Kre70} that there exists an ergodic Bernoulli shift without equivalent invariant probability measure, while in \cite{Ham81}, it was shown that there are ergodic Bernoulli shifts of type III, i.e.\ without equivalent $\sigma$-finite invariant measure. Finally, the first examples of Bernoulli shifts of type III$_1$ were constructed in \cite{Kos09}.

Proving ergodicity and determining the type of a nonsingular Bernoulli action is a difficult problem, because these actions may very well be dissipative (i.e.\ admit a fundamental domain), which was already proven in \cite{Ham81}. A first general result was obtained in \cite{Kos12} for $G = \Z$ and marginal measures $(\mu_n)_{n \in \Z}$ satisfying $\mu_n(0) = 1/2$ for all $n \leq 0$~: if such a Bernoulli action is nonsingular and conservative, then it must be either of type II$_1$ or of type III$_1$. In \cite{DL16}, the same result was proven if $\mu_n(0) = p$ for some $p \in (0,1)$ and all $n \leq 0$.

Only very recently, in \cite{VW17}, the first results were established for nonamenable groups $G$. It was conjectured in \cite{VW17} that a countable group $G$ admits a Bernoulli action of type III if and only if the first $L^2$-cohomology $H^1(G,\ell^2(G))$ is nonzero, which is equivalent to saying that $G$ is either infinite amenable or has positive first $L^2$-Betti number. The connection with $L^2$-cohomology stems from the following observation: if the marginal measures $\mu_g$ satisfy $\mu_g(0) \in [\delta,1-\delta]$ for all $g \in G$ and some $\delta > 0$, then by Kakutani's criterion (see \cite{Kak48}), the corresponding Bernoulli action is nonsingular if and only $c_g(h) = \mu_h(0) - \mu_{g^{-1}h}(0)$ has the property that $c_g \in \ell^2(G)$ for all $g \in G$, thus defining a $1$-cocycle $c \in Z^1(G,\ell^2(G))$.

Several families of examples of type III$_1$ Bernoulli actions were obtained in \cite{VW17}, confirming the conjecture for `most' countable groups. Although a general dissipative-conservative criterion was established in \cite{VW17}, it remained an open problem to prove ergodicity and determine the type in full generality.

When $\mu_g(0) \in [\delta,1-\delta]$ for all $g \in G$, it was proven that a conservative Bernoulli action must be ergodic, first for $G = \Z$ and other (amenable) groups satisfying the Hurewicz ratio ergodic theorem in \cite{Kos18}, and then for arbitrary amenable groups in \cite{D18} by using a new pointwise ergodic theorem. If moreover $\lim_{g \recht \infty} \mu_g(0)$ exists, the type of such a Bernoulli action could be determined in \cite{BK18} and is either II$_1$ or III$_1$. As a corollary, it was proven in \cite{BK18} that all infinite amenable groups admit type III$_1$ Bernoulli actions on $\{0,1\}^G$.

In this paper, we completely solve the ergodicity and type problem for arbitrary Bernoulli actions of abelian groups $G$, i.e.\ without assuming that $\mu_g(0)$ stays away from $0$ and $1$. For general countable groups $G$, we prove ergodicity and determine the type for all sufficiently conservative Bernoulli actions with $\mu_g(0)$ staying away from $0$ and $1$. In particular, we solve Krengel's question (see \cite{Kre70} and his MathSciNet review of \cite{Ham81}) and prove that no Bernoulli action of $\Z$ can be of type II$_\infty$. The same holds if $G$ is infinite abelian and not locally finite. However, for infinite locally finite groups, we construct Bernoulli actions of type II$_\infty$. We also confirm the conjecture of \cite{VW17} and prove that all groups $G$ with nontrivial $H^1(G,\ell^2(G))$ admit Bernoulli actions of type III$_1$.

The main results in this paper are the following. The first result deals with ergodicity. Recall that a nonsingular action $G \actson (X,\mu)$ is ergodic if every $G$-invariant Borel set $\cU \subset X$ has measure zero or measure one. The action is weakly mixing if the diagonal action $G \actson (X \times Y,\mu \times \eta)$ stays ergodic for every ergodic probability measure preserving action $G \actson (Y,\eta)$.

\begin{letterthm}[See Theorems \ref{thm.dichotomy-dissipative-weakly-mixing} and \ref{thm.general-ergodicity}]\label{thm-A}
If $G$ is an infinite abelian group, any nonsingular Bernoulli action $G \actson (X,\mu) = \prod_{g \in G} (\{0,1\},\mu_g)$ with $\mu$ nonatomic is either dissipative or weakly mixing.

If $G$ is any infinite group and $G \actson (X,\mu) = \prod_{g \in G} (\{0,1\},\mu_g)$ is a nonsingular Bernoulli action that is strongly conservative and satisfies $\mu_g(0) \in [\delta,1-\delta]$ for all $g \in G$, then $G \actson (X,\mu)$ is weakly mixing.
\end{letterthm}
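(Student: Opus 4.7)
The plan is to prove the two statements separately, with the general-group assertion providing the core ergodic mechanism and the abelian assertion requiring an additional reduction step to handle marginals that may degenerate toward $\{0,1\}$.

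\textbf{General-group statement.} Fix an ergodic probability-measure-preserving action $G \actson (Y,\eta)$; I need to show the diagonal action on $(X \times Y, \mu \times \eta)$ is ergodic. Let $\cU \subset X \times Y$ be a $G$-invariant Borel set of positive measure. For a finite $F \subset G$, write $\cB_F$ for the $\sigma$-algebra generated by coordinates of $X$ indexed by $F$, and set $f_F := E[1_\cU \mid \cB_F \otimes \cB(Y)]$. By martingale convergence, $f_F \recht 1_\cU$ in $L^2$ as $F \nearrow G$. Using strong conservativity, for each $N$ I would select distinct $g_1,\dots,g_N \in G$ so that (a) the translates $g_i F$ are pairwise disjoint in $G$, making the random variables $f_F \circ g_i$ depend on disjoint coordinate blocks of $x$ and hence independent for each fixed $y$, and (b) the associated Radon--Nikodym cocycle stays logarithmically bounded---this is where $\mu_g(0) \in [\delta,1-\delta]$ is essential, since it controls the Kakutani product of marginal densities. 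Because $1_\cU \circ g_i = 1_\cU$ by $G$-invariance, averaging the $f_F \circ g_i$ and exploiting their conditional independence forces the conditional variance of $1_\cU$ given $y$ to vanish, so $1_\cU$ depends only on $y$; ergodicity of $G \actson Y$ then yields the conclusion.

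\textbf{Abelian statement.} The Hopf decomposition is $G$-invariant, so after restricting I may assume the action is conservative, and the task is to prove weak mixing without assuming bounded marginals. My plan is to reduce to the general-group statement by separating the degenerate coordinates: let $G_\delta := \{g \in G : \mu_g(0) \in [\delta,1-\delta]\}$ and split $X = X_\delta \times X_\delta^c$ accordingly. Coordinates indexed by $G \setminus G_\delta$ are so concentrated that $X_\delta^c$ is essentially supported on a single configuration and contributes only through lower-order corrections to the Radon--Nikodym derivatives, while the factor $X_\delta$ is a Bernoulli system with bounded marginals. Abelianness of $G$ is crucial at this stage: it ensures that translates are homogeneous across coordinates, so that translates witnessing conservativity on the full space can be transferred to witnesses of strong conservativity on the $G_\delta$-subfactor, even though $G_\delta$ is not $G$-invariant as a subset of $G$. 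Once this reduction is in place, the general-group statement applied to the $G_\delta$-subsystem delivers weak mixing, which then lifts to $X$.

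\textbf{Main obstacle.} I expect the delicate step to be the abelian reduction: making rigorous that conservativity of the full system with possibly degenerate marginals implies the form of strong conservativity needed on the $G_\delta$-subsystem, while carefully controlling the contribution of the $G \setminus G_\delta$ coordinates to the Kakutani product (which need not converge absolutely when $\mu_g(0)$ approaches $0$ or $1$). Once this reduction is achieved and bounded log-densities are available, the independence-and-martingale argument sketched above is fairly standard; the real work is in the marginal-degeneracy analysis, which is precisely what distinguishes the abelian statement from the general-group one.
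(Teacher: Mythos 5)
Your proposal has genuine gaps in both halves, and in each case the gap sits exactly where the paper's real work is done.

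For the general-group statement, the claim in step (b) that $\mu_g(0)\in[\delta,1-\delta]$ makes the Radon--Nikodym cocycle of the translations ``logarithmically bounded'' is false: $\frac{d\mu(g\cdot x)}{d\mu(x)}=\prod_{h}\frac{\mu_{gh}(x_h)}{\mu_h(x_h)}$ is an infinite product whose individual factors are bounded by $(1-\delta)/\delta$, but whose value is typically unbounded in $g$ and $x$ --- this unboundedness is precisely the source of the type III phenomena the paper studies. Without it, $f_F\circ g_i$ does not approximate $1_\cU\circ g_i=1_\cU$ in $L^1(\mu)$ (the error is $\|f_F-1_\cU\|_{L^1(g_i^{-1}\mu)}$, which involves exactly these derivatives), so the conditional-independence averaging does not close. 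Moreover, strong conservativeness (Definition \ref{def.strong-conservative}) is an averaged recurrence condition on a sequence of probability measures $\eta_n$; it does not produce translates with disjoint supports and controlled cocycle. The paper's route is different: it uses the unital positive measure-preserving maps $\theta_{\eta_n}$ of Lemma \ref{lem.ucp} and Lemma \ref{lem.consequence-strong-conservative} to show that any $G$-invariant function is invariant under each single coordinate flip $\tau_s$ (which \emph{does} have uniformly bounded distortion, with $C=(1-\delta)/\delta$), and then concludes from the product structure. Your independence heuristic is the classical pmp Bernoulli argument, and the entire point of Section \ref{sec.strongly-conservative} is that it must be replaced in the nonsingular setting.

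For the abelian statement, the proposed reduction to the subsystem over $G_\delta=\{g:\mu_g(0)\in[\delta,1-\delta]\}$ fails in the main case of interest: if $\mu_g(0)\to 0$ with $\sum_g\mu_g(0)=+\infty$ (allowed, and exactly case 3 of Theorem \ref{thm-B}), then $G_\delta$ is finite for every $\delta$ and the ``degenerate'' factor $X_\delta^c$ is nonatomic, not concentrated on a single configuration, so there is no bounded-marginal subsystem to reduce to. You also assume you may restrict to the conservative part of the Hopf decomposition, but the dichotomy ``dissipative or weakly mixing'' requires showing that a nontrivial mixed Hopf decomposition cannot occur; this is Claim 2 of Lemma \ref{lem.automatic-invariance-permutation-group} and uses the ergodicity of the permutation action $\cS_G\actson(X,\mu)$ from \cite{AP77}. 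The paper's actual argument for the abelian case proves that $G$-invariant functions for the Maharam extension are invariant under all of $\cS_G$, via the ratio ergodic theorem of \cite{D18} combined with the key estimate (Claim 1) that $\mu_{ga}(i)/\mu_{gb}(i)\to 1$ off a summably small set of $g$ --- which uses nonsingularity of both the left and right shifts, automatic for abelian $G$ --- and then invokes the Aldous--Pitman ergodicity of $\cS_G\actson(X,\mu)$ under the nonatomicity hypothesis. Neither of these mechanisms appears in your sketch, and the ``main obstacle'' you flag is not merely delicate but, as formulated, not resolvable.
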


Strong conservativeness is introduced in Definition \ref{def.strong-conservative}. For amenable groups, it is equivalent to the usual notion of conservativeness (see Proposition \ref{prop.characterize-strong-conservative}). In Proposition \ref{prop.strong-conservative-Bernoulli}, we also provide an easy to check sufficient condition for the strong conservativeness of a nonsingular Bernoulli action~: if $\mu_g(0) \in [\delta,1-\delta]$ for all $g \in G$ and if $\kappa > \delta^{-1} (1-\delta)^{-1}$, it suffices that
\begin{equation}\label{eq.growth-cocycle}
\sum_{g \in G} \exp(-4\kappa \, \|c_g\|_2^2) =+\infty \; , \quad\text{where}\quad c_g(h) = \mu_h(0) - \mu_{g^{-1}h}(0) \; .
\end{equation}
So strong conservativeness (and hence also weak mixing, by Theorem \ref{thm-A}) holds if the function $g \mapsto \|c_g\|_2$ does not grow fast to infinity. Condition \eqref{eq.growth-cocycle} first appeared in \cite[Proposition 4.1]{VW17}, where it was shown that for $\kappa$ large enough, \eqref{eq.growth-cocycle} implies that $G \actson (X,\mu)$ is conservative, while if \eqref{eq.growth-cocycle} fails for $\kappa = 1/8$, then $G \actson (X,\mu)$ is dissipative.

We determine the Krieger type of the nonsingular Bernoulli actions appearing in Theorem \ref{thm-A}. We deal separately with abelian groups and arbitrary countable groups.

\begin{letterthm}[see Theorem \ref{thm.type-abelian}]\label{thm-B}
Let $G$ be an infinite abelian, non locally finite group and $G \actson (X,\mu) = \prod_{g \in G} (\{0,1\},\mu_g)$ any nonsingular Bernoulli action. Assume that $\mu$ is nonatomic and that $G \actson (X,\mu)$ is not dissipative. Then, $G \actson (X,\mu)$ is weakly mixing and its type is given as follows.
\begin{enumlist}
\item If $\lambda \in (0,1)$ and $\sum_{g \in G} (\mu_g(0)-\lambda)^2 < +\infty$, then $G \actson (X,\mu)$ is of type II$_1$.
\item If $\lim_{g \recht \infty} \mu_g(0) = \lambda \in (0,1)$ and $\sum_{g \in G} (\mu_g(0)-\lambda)^2 = +\infty$, then $G \actson (X,\mu)$ is of type III$_1$.
\item If $\lim_{g \recht \infty} \mu_g(0)$ equals $0$ or $1$, then $G \actson (X,\mu)$ is of type III.
\item If $\mu_g(0)$ does not converge as $g \recht \infty$, then $G \actson (X,\mu)$ is of type III$_1$.
\end{enumlist}
\end{letterthm}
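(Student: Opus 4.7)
Weak mixing is immediate from Theorem~\ref{thm-A}: nonatomicity of $\mu$ together with non-dissipativity forces conservativity, which the abelian dichotomy in Theorem~\ref{thm-A} upgrades to weak mixing. Case~(1) is then immediate from Kakutani's theorem: the hypothesis $\sum_g(\mu_g(0)-\lambda)^2<\infty$ forces $\mu_g(0)\to\lambda\in(0,1)$ as $g\to\infty$, so marginals eventually lie in a compact subset of $(0,1)$ and the $\ell^2$-condition is exactly the Kakutani--Hellinger criterion giving $\mu\sim\nu$ for the $G$-invariant Bernoulli$(\lambda)$ product $\nu=\prod_g\nu_\lambda$; combined with ergodicity this yields type~II$_1$.

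For cases~(2)--(4), the relevant object is the log Radon--Nikodym cocycle
\[
\log\omega_g(x)=\sum_{h\in G}\log\frac{\mu_{g^{-1}h}(x_h)}{\mu_h(x_h)},
\]
which under $\mu$ is a sum of independent bounded random variables. The Krieger type is encoded in its essential range (ratio set): type~II iff the cocycle is a measurable coboundary, type~III$_1$ iff the range is $\R$, type~III iff nontrivial but not full. Since $G$ is abelian and non-locally-finite, it contains $H\cong\Z$, and restriction to $H$ decomposes $X$ as a product of $H$-Bernoulli systems indexed by the cosets of $H$, reducing many arguments to $\Z$-type analysis. To exclude type~II in cases~(2)--(4), I would assume towards contradiction that $\nu\sim\mu$ is a $G$-invariant $\sigma$-finite measure, so that $\log\omega_g=F\circ g^{-1}-F$ $\mu$-a.s.\ for some measurable $F$, and combine Kolmogorov's 0-1 law (tail-triviality under $\mu$) with a Lindeberg CLT applied to $\log\omega_{g_n}$ along a carefully chosen sequence $g_n\in H$. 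Since the variance of $\log\omega_{g_n}$ is controlled by sums of the form $\sum_h(\mu_h(0)-\mu_{g_n^{-1}h}(0))^2$, the coboundary relation can only be sustained if $\sum_g(\mu_g(0)-\lambda)^2<\infty$ for some $\lambda$, contradicting the standing hypothesis in each of cases~(2)--(4).

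For the III vs III$_1$ distinction in cases~(2) and (4) the essential range is shown to be all of $\R$: in case~(2), marginals eventually lie in $[\delta,1-\delta]$ and failure of the $\ell^2$-condition together with a Lindeberg CLT on $\log\omega_{g_n}$ for well-chosen translations produces Gaussian fluctuations with variance $\to\infty$; in case~(4) one picks disjoint infinite $S_\alpha,S_\beta\subset G$ on which $\mu_g(0)$ converges to distinct values, and uses translations swapping many coordinates between $S_\alpha$ and $S_\beta$ to produce unbounded-variance CLTs with tunable mean. In case~(3) the degeneration $\mu_g(0)\to 0$ takes marginals out of every $[\delta,1-\delta]$ window and uniform Lindeberg estimates are unavailable; nontriviality of the essential range (type~III) still follows from the cocycle contributions at the rare coordinates with $x_h=1$, but density may fail, explaining why only type~III is asserted. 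The \textbf{main obstacle} is the exclusion of II$_\infty$ without uniform bounds on $\mu_g(0)$: Bernoulli actions can be dissipative (\cite{Ham81}), and cocycle arguments that ignore either the abelian structure (via $\Z\leq G$) or the weak mixing from Theorem~\ref{thm-A} appear insufficient to rule out $\log\omega$ being a coboundary.
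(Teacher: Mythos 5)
Your treatment of weak mixing and of case (1) matches the paper, but for cases (2)--(4) your route diverges from the paper's and has two genuine gaps. First, a CLT for $\log\omega_{g_n}$ only yields \emph{distributional} information about the Radon--Nikodym cocycle along a sequence; membership of a number $t$ in the ratio set requires \emph{localization} -- for every nonnegligible $\cU$ one must find $g$ and $\cV\subset\cU$ with $g\cdot\cV\subset\cU$ on which the derivative is close to $t$ -- and asymptotic normality with growing variance does not provide this (nor does it, by itself, rule out a coboundary: tightness of $F(g_n\cdot x)$ under $\mu$ is not automatic for a nonsingular action, since the law of $F\circ g_n$ is that of $F$ under $g_n^{-1}\mu$, not under $\mu$). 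The paper supplies the missing localization through Lemma \ref{lem.automatic-invariance-permutation-group}: any function invariant under the Maharam extension of $G\actson(X,\mu)$ is automatically invariant under the Maharam extension of the permutation action $\cS_G\actson(X,\mu)$, whose ergodicity and type are known from \cite{AP77,SV77} and Proposition \ref{prop.III-1-permutation-action} (via \cite{DL16}). You have no substitute for this step, and it is where the hypothesis ``not dissipative'' actually does its work (via the ratio ergodic theorem applied after the dissipative/conservative dichotomy).

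Second, your case (4) cannot be completed without the paper's structural analysis of the limit set $\cL$ of $\mu_g(0)$. Picking two subsets on which $\mu_g(0)$ converges to distinct values $\lambda,\lambda'$ produces (at best) the single ratio-set element $\lambda(1-\lambda')/((1-\lambda)\lambda')$, which gives some type III$_\lambda$, not III$_1$; and if one of the limit values is $0$ or $1$ (e.g.\ $\cL=\{0,\tfrac12\}$, which your hypotheses do not exclude a priori) the Lindeberg/swapping argument degenerates entirely. The paper closes this by showing that an isolated point of $\cL$ forces $\cL$ to be a singleton: the set $W=\{g:|\mu_g(0)-\lambda|\le\eps\}$ is almost invariant by Kakutani's criterion, so a nontrivial such $W$ would give $G$ more than one end, whence by Stallings' theorem (Remark \ref{rem.ends-groups}) the abelian non-locally-finite $G$ is virtually cyclic, and a Hellinger-integral estimate then shows the action is dissipative, a contradiction. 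Hence $\cL$ is a singleton or perfect, and in the perfect case the ratios above generate a dense subgroup of $\R^*_+$, yielding III$_1$. This ends-of-groups argument is also what ultimately excludes type II$_\infty$; your appeal to ``the abelian structure via $\Z\le G$'' gestures at it but does not carry it out.
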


Theorem \ref{thm-B} thus provides a complete answer to the question which types can arise for nonsingular Bernoulli actions of the group of integers $\Z$. In particular, the group of integers does not admit Bernoulli actions of type II$_\infty$. Only in the most delicate case where $\lim_{g \recht \infty} \mu_g(0)$ equals $0$ or $1$, it is unclear which of the types III$_\lambda$, $\lambda \in [0,1]$, can be realized.

Surprisingly, infinite locally finite groups admit nonsingular Bernoulli actions of type II$_\infty$, as well as type III$_\lambda$ for any $\lambda \in [0,1]$, see Proposition \ref{prop.locally-finite-type-II-infty}. The reason why locally finite groups $G$ behave differently is because they have infinitely many ends: they admit many subsets $W \subset G$ such that $W$ and the complement $G \setminus W$ are infinite, but $|g W \vartriangle W| < \infty$ for all $g \in G$.

For arbitrary countable groups $G$, we assume that $\mu_g(0) \in [\delta,1-\delta]$ for all $g \in G$ and that \eqref{eq.growth-cocycle} holds for some $\kappa > \delta^{-1} (1-\delta)^{-1}$. Again, we have to distinguish between the case where $G$ has only one end and the case where $G$ has infinitely many ends. Generically, an infinite group has one end. By Stallings theorem (see Remark \ref{rem.ends-groups}), the only other possibilities are zero ends (for finite groups), two ends (for virtually cyclic groups) and infinitely many ends (for locally finite groups, as well as for certain amalgamated free products and HNN extensions).

\begin{letterthm}[See Theorem \ref{thm.type-bernoulli}]\label{thm-C}
Let $G$ be a group with one end and let $G \actson (X,\mu) = \prod_{g \in G} (\{0,1\},\mu_g)$ be a nonsingular Bernoulli action with $\mu_g(0) \in [\delta,1-\delta]$ for all $g \in G$. Assume that \eqref{eq.growth-cocycle} holds for a $\kappa > \delta^{-1} (1-\delta)^{-1}$. Then, $G \actson (X,\mu)$ is weakly mixing and of type III$_1$, unless $\sum_{g \in G} (\mu_g(0)-\lambda)^2 < +\infty$ for some $\lambda \in (0,1)$, in which case the action is of type II$_1$.
\end{letterthm}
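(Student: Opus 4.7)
The weak mixing claim is immediate: by Proposition \ref{prop.strong-conservative-Bernoulli}, the combination of $\mu_g(0)\in[\delta,1-\delta]$ with \eqref{eq.growth-cocycle} for $\kappa>\delta^{-1}(1-\delta)^{-1}$ implies strong conservativeness, and Theorem \ref{thm-A} then delivers weak mixing. The type II$_1$ case is also quick: if $\sum_{g\in G}(\mu_g(0)-\lambda)^2<\infty$ for some $\lambda\in(0,1)$, then Kakutani's criterion gives $\mu\sim\nu_\lambda:=\prod_{g\in G}\mu^\lambda$ with $\mu^\lambda(0)=\lambda$, and since $\nu_\lambda$ is $G$-invariant and the action is already ergodic, it is of type II$_1$.

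From now on, assume $\sum_{g\in G}(\mu_g(0)-\lambda)^2=\infty$ for every $\lambda\in(0,1)$; the goal is to show the Krieger type is III$_1$, equivalently that the ratio set $r(G\actson(X,\mu))$ equals $[0,\infty)$. Writing $\om_g=d(g^{-1}\mu)/d\mu$, the log Radon--Nikodym derivative $\log\om_g$ is a $\mu$-independent sum indexed by $G$ of the local increments $\log(\mu_{g^{-1}h}(x_h)/\mu_h(x_h))$. I would approximate any positive-measure set $A$ by a cylinder depending on coordinates in a finite set $F\subset G$, and use weak mixing to reduce the ratio-set condition at $e^t$ to the following task: for every prescribed $t\in\R$, $\eps>0$, and finite $F$, find $g\in G$ with $gF\cap F=\emptyset$ such that the tail random variable $\sum_{h\notin F\cup gF}\log(\mu_{g^{-1}h}(x_h)/\mu_h(x_h))$ carries positive $\mu$-mass near $t$.

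The one-end hypothesis enters decisively at this point. Since $G$ has exactly one end, the complement of any finite $F$ in the Cayley structure of $G$ has a unique infinite connected component. This supplies both the geometric flexibility needed to route $g$ through that component and, more crucially, the rigidity preventing the Radon--Nikodym cocycle from splitting into pieces indexed by distinct ends of $G$. Together with the divergence $\sum(\mu_g(0)-\lambda)^2=\infty$ for every $\lambda$, a Lindeberg/CLT-type analysis along long sequences inside the unique infinite component should produce values of $\log\om_g$ clustering densely throughout $\R$, yielding ratio set equal to $[0,\infty)$.

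The main obstacle is the rigidity step: ruling out every non-III$_1$ Krieger type. I would aim to show that any trivialization of the Radon--Nikodym cocycle modulo a proper closed subgroup of $\R$ (say $\log\lambda_0\cdot\Z$) must come from a globally constant coboundary, which by Kakutani would force $\sum(\mu_g(0)-\lambda_0)^2<\infty$ and contradict the standing hypothesis. For groups with multiple ends this rigidity genuinely fails -- the ends provide Stallings-type splittings allowing a cocycle to be cohomologous to a discrete one -- which is precisely why the multi-ended case leads to other Krieger types, as the introduction foreshadows. Making the one-end rigidity quantitative and compatible with the product measure structure of $\mu$ is the heart of the proof.
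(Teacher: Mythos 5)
Your treatment of the weak mixing claim and of the type II$_1$ case is correct and matches the paper: strong conservativeness from Proposition \ref{prop.strong-conservative-Bernoulli}, weak mixing from Theorem \ref{thm.general-ergodicity}, and Kakutani for the square-summable case. But the heart of the matter --- proving type III$_1$ when no $\lambda$ works --- is where your proposal has a genuine gap, one you partly acknowledge yourself. The reduction you describe (``approximate $A$ by a cylinder, use weak mixing to reduce the ratio-set condition at $e^t$ to finding $g$ such that the tail sum carries positive mass near $t$'') is not a valid reduction: the ratio set requires, for \emph{every} nonnegligible $\cU$, a $g$ and a nonnegligible $\cV \subset \cU$ with $g \cdot \cV \subset \cU$ and the Radon--Nikodym derivative pinned near $e^t$ on $\cV$, and producing values of $\log \om_g$ that cluster in $\R$ does not control where $g \cdot \cV$ lands. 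This difficulty is exactly why the type problem for nonsingular Bernoulli actions remained open; a CLT/Lindeberg argument on the increments does not by itself close it. Moreover, your intended use of the one-end hypothesis (unique infinite component of the complement of a finite set in the Cayley graph, ``routing'' $g$ through it) is not how the hypothesis actually bites.

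The paper's route is structurally different. The key step is Lemma \ref{lem.general-automatic-invariance-permutation-group}: any function invariant under the Maharam extension of $G \actson (X,\mu)$ is automatically invariant under the Maharam extension of the full finite permutation group $\cS_G \actson (X,\mu)$. Proving this requires the strong conservativeness machinery (Lemma \ref{lem.ucp}, the measures $\eta_n$ of Proposition \ref{prop.strong-conservative-Bernoulli} applied to the Maharam extension itself), a period analysis of the invariant function (Lemma \ref{lem.technical-periods}), and the Choquet--Deny theorem to force weak$^*$ limits of certain probability measures to be Dirac masses. Once $\cS_G$-invariance is in hand, the type is read off from the known classification of permutation actions on infinite products (\cite{AP77}, \cite{SV77}, and Proposition \ref{prop.III-1-permutation-action}). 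The one-end hypothesis then enters through almost invariant subsets: the set $\cL$ of limit values of $\mu_g(0)$, if finite with $n \geq 2$ elements, produces a partition of $G$ into $n$ nontrivial almost invariant subsets, which Stallings' theorem forbids for a one-ended group; hence either $\cL$ is a singleton (giving II$_1$ or III$_1$ according to square-summability) or $\cL$ is infinite (giving III$_1$ via accumulation points of $\cL$ and Proposition \ref{prop.III-1-permutation-action}). Your closing intuition about Stallings-type splittings explaining the multi-ended exceptions is in the right spirit, but without the automatic $\cS_G$-invariance step your argument does not reach the stated conclusion.
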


In Theorem \ref{thm.type-bernoulli}, we also determine the type of $G \actson (X,\mu)$ when $G$ has more than one end. In that case, there always exist Bernoulli actions of type III$_\lambda$ with $\lambda \neq 1$. As a corollary, we prove the following result, confirming the conjecture made in \cite{VW17}.

\begin{letterthm}[see Corollary \ref{cor.solution-VW-conjecture}]\label{thm-D}
A countable infinite group $G$ admits a nonsingular Bernoulli action of type III$_1$ if and only if $H^1(G,\ell^2(G)) \neq \{0\}$.
\end{letterthm}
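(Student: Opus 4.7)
The plan is to prove each direction separately. The easy direction $(\Rightarrow)$ is a Kakutani-cocycle argument, while the hard direction $(\Leftarrow)$ builds a type III$_1$ Bernoulli action out of a nontrivial cohomology class, splitting into the amenable and the $\beta_1^{(2)}>0$ cases.

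For the direction ``type III$_1$ exists $\Rightarrow H^1(G,\ell^2(G)) \neq \{0\}$'', let $G \actson \prod_g (\{0,1\},\mu_g)$ be of type III$_1$, set $p_g = \mu_g(0)$, and introduce $\theta_g \in [0,\pi/2]$ with $\cos\theta_g = \sqrt{p_g}$. Kakutani's criterion for nonsingularity then yields $c_g(h) := \theta_h - \theta_{g^{-1}h} \in \ell^2(G)$ for each $g$, and a direct check shows $c \in Z^1(G,\ell^2(G))$. If $H^1(G,\ell^2(G)) = \{0\}$, then $c$ is a coboundary, so $\theta_g = \lambda + \xi(g)$ for some constant $\lambda \in [0,\pi/2]$ and $\xi \in \ell^2(G)$, whence $\sum_g(\sqrt{p_g}-\cos\lambda)^2 < \infty$ and $\sum_g(\sqrt{1-p_g}-\sin\lambda)^2 < \infty$. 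If $\lambda \in (0,\pi/2)$, Kakutani yields $\mu \sim \prod_g (\cos^2\lambda,\sin^2\lambda)$, a $G$-invariant probability measure, so the action is of type II$_1$; if $\lambda \in \{0,\pi/2\}$, one of the two sums collapses to $\sum_g (1-p_g) < \infty$ or $\sum_g p_g < \infty$, and Borel--Cantelli forces dissipativity. Both conclusions contradict type III$_1$.

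For the direction ``$H^1(G,\ell^2(G)) \neq \{0\} \Rightarrow$ type III$_1$ exists'', recall from the introduction that the hypothesis is equivalent to $G$ being infinite amenable or satisfying $\beta_1^{(2)}(G) > 0$. When $G$ is infinite amenable, a Bernoulli action of type III$_1$ was produced in \cite{BK18}. When $\beta_1^{(2)}(G) > 0$, fix a nontrivial $[c] \in H^1(G,\ell^2(G))$ and, via the standard identification $b(g) := c_g(g)$, write $c_g(h) = b(h) - b(g^{-1}h)$ for a real-valued function $b$ on $G$. Define marginals $p_g := \tfrac12 + \alpha\, F(b(g))$ with $\alpha > 0$ small and $F:\R \to [-1,1]$ a Lipschitz function chosen so that $F \circ b$ still represents a nontrivial element of $H^1(G,\ell^2(G))$ (for instance $F = \sin$ when $b$ is unbounded, or a rescaling of the identity on the range of $b$ when $b$ is bounded). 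Then $p_g \in [\tfrac12-\alpha,\tfrac12+\alpha]$, the Kakutani cocycle of the resulting Bernoulli action is $\alpha(F\circ b - g \cdot (F\circ b))$, and $\sum_g(p_g-\mu)^2=\infty$ for every $\mu \in (0,1)$. Shrinking $\alpha$ makes $\delta := \tfrac12-\alpha$ close to $\tfrac12$ and the cocycle norms small enough that \eqref{eq.growth-cocycle} holds for some $\kappa > \delta^{-1}(1-\delta)^{-1}$, securing strong conservativeness by Proposition~\ref{prop.strong-conservative-Bernoulli}. Theorem~\ref{thm-C} in the one-ended case, and its multi-ended refinement stated in Theorem~\ref{thm.type-bernoulli} in general, then identify the Bernoulli action as weakly mixing of type III$_1$.

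The principal obstacle lies in the $\beta_1^{(2)}(G) > 0$ step: one must pick $F$ so that $F \circ b$ remains nontrivial in $H^1$ even when $b$ is unbounded or fails to converge, while simultaneously keeping the growth of $\|c_g\|_2$ modest enough for \eqref{eq.growth-cocycle} to hold. The small scaling $\alpha$ buys the latter, whereas the freedom in choosing $F$ addresses the former. A further subtlety appears when $G$ has more than one end, since then Bernoulli actions of types III$_\lambda$ with $\lambda < 1$ also exist; ruling those out for our specific construction is precisely the content of the multi-ended part of Theorem~\ref{thm.type-bernoulli}.
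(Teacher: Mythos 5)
Your forward direction (type III$_1$ $\Rightarrow$ $H^1(G,\ell^2(G))\neq\{0\}$) is sound and is essentially the proof of \cite[Theorem 3.1]{VW17}, which the paper simply cites. The amenable half of the converse, via \cite{BK18}, also matches the paper. The gap is in the nonamenable case with more than one end. You write that ruling out types III$_\lambda$, $\lambda<1$, for your construction ``is precisely the content of the multi-ended part of Theorem \ref{thm.type-bernoulli}.'' It is not: that theorem (with Remark \ref{rem.stable-type}) says that when the cocycle is cohomologous to a nonzero element of $\Zai(G,\ell^2(G))$, the action genuinely \emph{is} of type III$_\lambda$ with $\lambda$ possibly $\neq 1$, computed from the limit values of $\mu_g(0)$ on the almost invariant pieces. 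Your recipe $p_g=\tfrac12+\alpha F(b(g))$ can land exactly there. Concretely, for $G=\Z/4 * \Z/4$ (nonamenable, infinitely many ends, $\beta^{(2)}_1>0$) one may take $b=1_W$ for the almost invariant set $W$ of Proposition \ref{prop.good-almost-invariant}; then $F\circ b$ takes two values, the marginals are as in \eqref{eq.candidate-type-III-lambda}, and the resulting action is of type III$_\lambda$ with $\lambda\neq 1$, not III$_1$. The paper avoids this by treating the multi-ended nonamenable case separately: it partitions $G$ into \emph{three} infinite almost invariant subsets $W_1\sqcup W_2\sqcup W_3$ and tunes the three marginal values $\tfrac12+\delta_i$ so that the logarithmic ratios in \eqref{eq.group-Lambda} generate a dense subgroup of $\R^*_+$, forcing type III$_1$ via Remark \ref{rem.stable-type}. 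Some such additional choice is unavoidable; your single-function construction does not supply it.

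Two further points are asserted rather than proved. First, ``shrinking $\alpha$ makes the cocycle norms small enough that \eqref{eq.growth-cocycle} holds'' is not automatic: one needs $\sum_g \exp(-4\kappa\alpha^2\|\tilde c_g\|_2^2)=+\infty$ for some $\kappa>\delta^{-1}(1-\delta)^{-1}$, and the paper obtains this from Schoenberg's theorem (the functions $g\mapsto\exp(-\eps\|\tilde c_g\|_2^2)$ are positive definite and tend to $1$ pointwise, hence cannot all lie in $\ell^2(G)$ when $G$ is nonamenable). This is exactly where nonamenability enters and should be said. Second, the existence of a Lipschitz $F$ with $F\circ b$ still non-inner needs an argument; $F=\sin$ can fail outright (e.g.\ if $b$ takes values in $\pi\Z$), and even a truncation must be checked to remain outside $\mathrm{const}+\ell^2(G)$. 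The paper's one-ended argument performs such a rescaling into $[0,1]$ and then applies case 3 of Theorem \ref{thm.type-bernoulli}, which works there precisely because for one-ended groups every cocycle in $\Zai(G,\ell^2(G))$ is a coboundary -- the case distinction by the number of ends is doing real work and cannot be collapsed.
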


We use the following approach to prove Theorems \ref{thm-A} to \ref{thm-D}. Let $G \actson (X,\mu) = \prod_{g \in G} (\{0,1\},\mu_g)$ be a nonsingular Bernoulli action. Denoting by $\al : G \times X \recht \R$ the logarithm of the Radon-Nikodym cocycle, we consider the Maharam extension $G \actson X \times \R$ given by
$$g \cdot (x,t) = (g \cdot x, \al(g,x) + t) \; ,$$
which is an infinite measure preserving action. Proving ergodicity and determining the type of $G \actson (X,\mu)$ amounts to determining the $G$-invariant functions $L^\infty(X \times \R)^G$.

Denote by $\cS_G$ the group of finite permutations of the countable set $G$. Consider the natural nonsingular action $\cS_G \actson (X,\mu)$ given by permuting the coordinates of the infinite product $(X,\mu) = \prod_{g \in G} (\{0,1\},\mu_g)$. As in \cite{Kos18,D18,BK18}, the main step is to prove that a $G$-invariant function $F \in L^\infty(X \times \R)$ is automatically invariant under the Maharam extension of $\cS_G \actson (X,\mu)$. In \cite{Kos18,D18,BK18}, essential use is made of pointwise ergodic theorems for nonsingular group actions $G \actson (X,\mu)$, so that only amenable groups can be treated. For general, possibly nonamenable groups $G$, we do not use pointwise ergodic theorems, but exploit a new notion of strong conservativeness that we introduce in Section \ref{sec.strongly-conservative} and that is of independent interest.

Also, all past results determining the type of a nonsingular Bernoulli action $G \actson (X,\mu)$ were making use of rather restrictive assumptions on the measures $\mu_g$, like the existence of a limit for $\mu_g(0)$ when $g$ tends to infinity in specified directions. In this paper, we no longer need this assumption and, roughly speaking, use instead weak limits of probability distributions given by the values $\mu_g(0)$ as $g$ tends to infinity (see the proof of Lemma \ref{lem.general-automatic-invariance-permutation-group}).

In the special case where $G$ is abelian, we no longer need to make the assumption that $\mu_g(0)$ stays away from $0$ and $1$. Theorem \ref{thm-B} is the first result determining the type of a nonsingular Bernoulli action in such a situation. An important step in the proof is to show that even when $\mu_g(0)$ tends to zero, for every $a \in G$ and for `most' $g$ tending to infinity, the quotient $\mu_{ga}(0)/\mu_g(0)$ converges to $1$ (see the proof of Lemma \ref{lem.automatic-invariance-permutation-group}).

Once invariance under $\cS_G$ is proven, we can invoke \cite{AP77,SV77}, where it is shown when $\cS_G \actson (X,\mu)$ is ergodic and what its type is. In Section \ref{sec.permutation-action}, we recall these results on permutation actions.

\section{Preliminaries}

\subsection{Nonsingular group actions: notations and terminology}

Given a standard probability space $(X,\mu)$, a Borel map $\al : X \recht X$ is called \emph{nonsingular} if $\mu(\al^{-1}(\cU)) = 0$ whenever $\cU \subset X$ is a Borel set of measure zero. When $\al$ is a nonsingular Borel bijection, we denote the Radon-Nikodym derivative between $\mu$ and $\mu \circ \al$ as
\begin{align*}
& \frac{d\mu(\al(x))}{d\mu(x)} = \frac{d \mu \circ \al}{d\mu} (x) = \frac{d \al^{-1} \mu}{d\mu}(x) \quad\text{such that}\\ &\int_X F(x) \frac{d\mu(\al(x))}{d\mu(x)} \, d\mu(x) = \int_X F(\al^{-1}(x)) \, d\mu(x) \quad\text{for all $F \in L^\infty(X)$.}
\end{align*}

An action $\al$ of a countable group $G$ by Borel bijections of $X$ is called nonsingular if $\al_g$ is nonsingular for every $g \in G$. Such a nonsingular action $G \actson (X,\mu)$ is called \emph{ergodic} if all $G$-invariant Borel subsets of $X$ have measure $0$ or $1$. The action is called \emph{(essentially) free} if $g \cdot x \neq x$ for all $g \in G \setminus \{e\}$ and a.e.\ $x \in X$.

A free nonsingular action $G \actson (X,\mu)$ is called \emph{conservative} if for every nonnegligible Borel set $\cU \subset X$, there exists a $g \in G \setminus \{e\}$ such that $\mu(\cU \cap g \cdot \cU) > 0$. The action is called \emph{dissipative} if there exists a Borel set $\cU \subset X$ such that all $(g \cdot \cU)_{g \in G}$ are disjoint and $\bigcup_{g \in G} g \cdot \cU$ has measure $1$. Whenever $G \actson (X,\mu)$ is free and nonsingular, there exist disjoint $G$-invariant Borel sets $X_0,X_1 \subset X$ such that $G \actson X_0$ is conservative, $G \actson X_1$ is dissipative and $\mu(X_0 \cup X_1) = 1$. Up to sets of measure zero, $X_0$ and $X_1$ are unique. Also,
$$X_1 = \Bigl\{ x \in X \Bigm| \sum_{g \in G} \frac{d\mu(g \cdot x)}{d\mu(x)} < +\infty\Bigr\} \; .$$
For more details on conservativeness of group actions, see e.g.\ \cite[Chapter 1]{Aar97}.

\subsection{Maharam extension and type}

Fix a countable group $G$ and a nonsingular action $G \actson (X,\mu)$. The map
$$\al : G \times X \recht \R : \al(g,x) = \log \frac{d\mu(g \cdot x)}{d\mu(x)}$$
satisfies the $1$-cocycle identity $\al(gh,x) = \al(g,h \cdot x) + \al(h,x)$ for all $g,h \in G$ and a.e.\ $x \in X$. Then,
$$G \actson X \times \R : g \cdot (x,t) = (g \cdot x,t + \al(g,x))$$
is a group action, called the \emph{Maharam extension} of $G \actson (X,\mu)$ (see \cite{Mah63}). It is easy to check that the (infinite) measure $d \mu \times e^{-t} dt$ on $X \times \R$ is $G$-invariant. The action $G \actson X \times \R$ commutes with the action $\R \actson X \times \R$ given by $s \cdot (x,t) = (x,t+s)$. Defining $(Z,\eta)$ as the space of ergodic components of $G \actson (X,\mu)$, so that the algebra of $G$-invariant functions $L^\infty(X \times \R)^G$ can be identified with $L^\infty(Z,\eta)$, we find the nonsingular action $\R \actson (Z,\eta)$, which is called the \emph{associated flow} of $G \actson (X,\mu)$ (see \cite{Kri75}). Up to conjugacy, this flow only depends on the measure class of $\mu$.

Assume now that $G \actson (X,\mu)$ is ergodic. The \emph{ratio set} $r(\al)$ of $G \actson^\al (X,\mu)$ (see \cite{Kri70}) is the closed subset of $[0,+\infty)$ consisting of all $t \in [0,+\infty)$ satisfying the following property: for every $\eps > 0$ and for every nonnegligible Borel set $\cU \subset X$, there exists a $g \in G$ and a nonnegligible Borel set $\cV \subset \cU$ such that $g \cdot \cV \subset \cU$ and $|d\mu(g \cdot x)/d\mu(x) - t| < \eps$ for every $x \in \cV$. Since $r(\al) \cap (0,+\infty)$ is a closed multiplicative subgroup of $\R^*_+$, we are thus in precisely one of the following cases. We refer to \cite{Kri70} for further details.
\begin{enumlist}
\item $r(\al) = \{1\}$. This corresponds to the case where there exists a $G$-invariant measure $\mu_1 \sim \mu$. When $\mu$ is atomic, the action $G \actson (X,\mu)$ is said to be of type I. When $\mu$ is nonatomic and $\mu_1(X) < +\infty$, the action $G \actson (X,\mu)$ is said to be of type II$_1$. When $\mu$ is nonatomic and $\mu_1(X) = +\infty$, the action $G \actson (X,\mu)$ is said to be of type II$_\infty$.
\item $r(\al) = \{0,1\}$. Then the action $G \actson (X,\mu)$ is said to be of type III$_0$.
\item $r(\al) =\{0\} \cup \{\lambda^n \mid n \in \Z\}$ for some $\lambda \in (0,1)$. Then the action $G \actson (X,\mu)$ is said to be of type III$_\lambda$.
\item $r(\al) = [0,+\infty)$. Then the action $G \actson (X,\mu)$ is said to be of type III$_1$.
\end{enumlist}

The types are also determined by the associated flow of the action, see \cite{Kri75}. The action $G \actson (X,\mu)$ is of type I or II if and only if the associated flow is conjugate with the translation action of $\R$ on itself. The action is of type III$_\lambda$ with $\lambda \in (0,1)$ if and only if the associated flow is conjugate with the translation action of $\R$ on $\R / \Z \log \lambda$. The action is of type III$_1$ if and only if the associated flow is trivial, meaning that the Maharam extension $G \actson X \times \R$ is ergodic. Finally, the action is of type III$_0$ if and only if the associated flow is properly ergodic, meaning that every orbit of $\R \actson (Z,\eta)$ has measure zero. It also follows that $\log(r(\al) \cap (0,+\infty))$ is the kernel of the associated flow, i.e.\ the subgroup of $s \in \R$ that act trivially on $Z$ a.e.

Both the ratio set and the associated flow of an ergodic nonsingular action $G \actson (X,\mu)$ only depend on the \emph{orbit equivalence relation} $\cR(G \actson X) = \{(g \cdot x,x) \mid g \in G, x\in X\}$. The associated flow is identical to the flow of weights of the von Neumann algebra of $\cR(G \actson X)$.

\subsection{Permutation actions on infinite products}\label{sec.permutation-action}

Given a countably infinite set $I$, denote by $\cS_I$ the group of finite permutations of $I$. Whenever $(X,\mu) = \prod_{i \in I} (\{0,1\},\mu_i)$ is a product probability space with $\mu_i(0) \in (0,1)$ for all $i \in I$, we consider the natural nonsingular action $\cS_I \actson (X,\mu)$ given by permuting the coordinates: $(\si \cdot x)_i = x_{\si^{-1}(i)}$ for all $x \in X$, $i \in I$, $\si \in \cS_I$.

Note that $(X,\mu) = \prod_{i \in I} (\{0,1\},\mu_i)$ is nonatomic if and only if
\begin{equation}\label{eq.nonatomic-condition}
\sum_{i \in I} \min \{\mu_i(0),\mu_i(1)\} = +\infty \; .
\end{equation}

By \cite[Theorem 1.1]{AP77}, the action $\cS_I \actson (X,\mu)$ is ergodic if and only if the nonatomicity condition \eqref{eq.nonatomic-condition} holds. Assuming that \eqref{eq.nonatomic-condition} holds, \cite[Theorem 1.2]{SV77} says that $\cS_I \actson (X,\mu)$ is of type III, unless there exists a subset $I_0 \subset I$ with infinite complement $I_1 \subset I$ and a $\lambda \in (0,1)$ such that
$$\sum_{i \in I_0} \min \{\mu_i(0),\mu_i(1)\} + \sum_{i \in I_1} (\mu_i(0) - \lambda)^2 <+\infty \; .$$
In the latter case, the action is of type II$_1$ when $I_0$ is finite and of type II$_\infty$ when $I_0$ is infinite.

The following result is an immediate consequence of \cite[Proposition 1.5]{DL16}. The first point is a special case of \cite[Remark 1.7]{DL16}. For completeness, we give a detailed argument.

\begin{proposition}\label{prop.III-1-permutation-action}
Let $I$ be a countably infinite set and $(X,\mu) = \prod_{i \in I} (\{0,1\},\mu_i)$ a product probability space with $\mu_i(0) \in (0,1)$ for all $i \in I$.
\begin{enumlist}
\item If $\lim_{i \recht \infty} \mu_i(0) = \lambda \in (0,1)$ and $\sum_{i \in I} (\mu_i(0) - \lambda)^2 = +\infty$, then $\cS_I \actson (X,\mu)$ is ergodic and of type III$_1$.
\item If $\lambda,\lambda' \in (0,1)$ are both limit values of $\mu_i(0)$ for $i \recht \infty$, then $\cS_I \actson (X,\mu)$ is ergodic and
$$\frac{\lambda \; (1-\lambda')}{(1-\lambda) \; \lambda'}$$
belongs to the ratio set of $\cS_I \actson (X,\mu)$.
\end{enumlist}
\end{proposition}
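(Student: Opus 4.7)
The plan is to treat ergodicity uniformly via the criterion from \cite{AP77} and then separately compute ratio set elements by hand for part (2), while deferring the sharper claim in part (1) to the divergence-based III$_1$ criterion of \cite[Proposition 1.5]{DL16}.

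For ergodicity, I would first verify the nonatomicity condition \eqref{eq.nonatomic-condition}. In case (1), since $\mu_i(0) \recht \lambda \in (0,1)$, for $i$ large enough we have $\min\{\mu_i(0), \mu_i(1)\} \geq \min\{\lambda, 1-\lambda\}/2$, so the sum $\sum_i \min\{\mu_i(0), \mu_i(1)\}$ diverges. In case (2), the existence of any limit value $\lambda \in (0,1)$ produces an infinite subsequence along which $\mu_i(0)$ stays in a compact subset of $(0,1)$, so the same sum diverges. Applying \cite[Theorem 1.1]{AP77} then yields ergodicity in both cases.

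For part (2), I would fix a nonnegligible Borel set $\cU \subset X$ and $\eps > 0$. Approximating by cylinders, I may assume up to an arbitrarily small relative error that $\cU$ depends only on coordinates in a finite set $F \subset I$. Choosing distinct $i,j \in I \setminus F$ with $\mu_i(0)$ close to $\lambda$ and $\mu_j(0)$ close to $\lambda'$, and letting $\si \in \cS_I$ be the transposition $(i\; j)$, the set $\cV = \cU \cap \{x_i = 1,\, x_j = 0\}$ is nonnegligible and
\[
\si \cdot \cV = \cU \cap \{x_i = 0,\, x_j = 1\} \subset \cU \; .
\]
All coordinates outside $\{i,j\}$ contribute trivially to the Radon--Nikodym derivative, so a direct computation yields
\[
\frac{d\mu(\si \cdot x)}{d\mu(x)} = \frac{\mu_i(0)\,\mu_j(1)}{\mu_i(1)\,\mu_j(0)} \quad\text{for $x \in \cV$,}
\]
which by taking $i,j$ far enough along the two subsequences can be made arbitrarily close to $\lambda(1-\lambda')/((1-\lambda)\lambda')$. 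This shows the claimed value lies in $r(\al)$.

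For part (1), applying the construction of (2) with $\lambda = \lambda'$ only gives the trivial element $1 \in r(\al)$, so the divergence hypothesis $\sum_i (\mu_i(0) - \lambda)^2 = +\infty$ must be brought to bear. Here I would invoke \cite[Proposition 1.5]{DL16}: when one aggregates many coordinates with small deviations $\mu_{i_k}(0) - \lambda$, a Taylor expansion shows that for any number-preserving permutation of the values on these coordinates, the log-Radon--Nikodym derivative is, to leading order, a signed sum $(\lambda(1-\lambda))^{-1}\sum_k \pm (\mu_{i_k}(0)-\lambda)$ whose variance diverges. A central limit type argument then produces, for every target $t \in \R$ and every nonnegligible $\cU$, a finite permutation $\si$ and a nonnegligible $\cV \subset \cU$ with $\si \cdot \cV \subset \cU$ on which $\log(d\mu(\si \cdot x)/d\mu(x))$ is within $\eps$ of $t$. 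Hence $r(\al) = [0,+\infty)$ and the action is of type III$_1$.

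The main obstacle is precisely this last step: the crude transposition of (2) degenerates as $\mu_i(0) - \lambda \recht 0$, so one must simultaneously aggregate many coordinates with small deviations, track the distribution of the cumulative log-Radon--Nikodym derivative uniformly on a cylinder approximation of $\cU$, and verify that a carefully chosen sublevel set is sent back inside $\cU$ by the permutation. This is exactly the content of \cite[Proposition 1.5]{DL16}, and once it is invoked the conclusion is immediate.
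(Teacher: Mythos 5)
Your treatment of ergodicity and of part (2) is correct, and for part (2) it is genuinely more elementary than the paper's: you exhibit the ratio set element directly by a single transposition $\si_{i,j}$ with $i,j$ chosen far out along the two subsequences, together with a standard cylinder approximation of $\cU$ (the relative-error argument works because $\mu(\{x_i=1,x_j=0\})$ and $\mu(\{x_i=0,x_j=1\})$ are uniformly bounded below). The paper instead derives part (2) indirectly: it shows that every function invariant under the Maharam extension of $\cS_I$ is invariant under the two modified cocycles $\gamma_\rho$ and $\gamma_{\rho'}$ on the homoclinic relation $\cR_I$, hence satisfies $F(x,t)=F(x,t-\rho+\rho')$, which places $\exp(\rho-\rho')$ in the kernel of the associated flow. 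Your computation is a clean shortcut for this part.

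Part (1), however, has a genuine gap. In this paper, \cite[Proposition 1.5]{DL16} is the statement that the cocycle $\gamma_\rho$ of \eqref{eq.modified-1-cocycle} is \emph{ergodic as a cocycle on the homoclinic equivalence relation} $\cR_I$, i.e.\ the skew product of $\cR_I$ by $\gamma_\rho$ acting on $X\times\R$ is ergodic. The orbit equivalence relation of $\cS_I$ is a \emph{proper subrelation} of $\cR_I$ (it is the kernel of the $\Z$-valued cocycle $(x',x)\mapsto\sum_i(x'_i-x_i)$; single coordinate flips $\tau_i$ do not belong to it), and ergodicity of a cocycle over a larger relation does not pass down to a subrelation. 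So the cited proposition cannot be invoked "directly" to produce permutations realizing arbitrary Radon--Nikodym values, and the CLT-for-permutations argument you sketch is not its content; if you wanted to argue that way you would have to carry it out from scratch, which is a substantial piece of analysis. The missing bridge --- and the actual core of the paper's proof --- is to show that any $F\in L^\infty(X\times\R)$ invariant under the Maharam extension of $\cS_I$ is automatically $\gamma_\rho$-invariant for the whole of $\cR_I$. The paper proves this by a weak-limit argument: applying the transpositions $\si_{i,j_n}$ with $j_n\recht\infty$ and $\mu_{j_n}(0)\recht\lambda$, averaging out the $j_n$'th coordinate, and using $R_{i,j_n}(0,1)\recht R_i-\rho$ to conclude $F(1,x,t)=F(0,x,t-R_i+\rho)$, i.e.\ invariance under the flip $\tau_i$ twisted by $\gamma_\rho$; since the $\tau_i$ generate $\cR_I$, the invariance extends to $\cR_I$ and only then does \cite[Proposition 1.5]{DL16} apply to force $F$ constant. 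Without this upgrading step your part (1) does not close.
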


In the proof of Proposition \ref{prop.III-1-permutation-action}, we make use of the so called \emph{homoclinic} equivalence relation $\cR_I$ on $(X,\mu)$ defined by $x \sim x'$ if and only if $x_i = x'_i$ for all but finitely many $i \in I$. Note that the orbit equivalence relation of $\cS_I \actson (X,\mu)$ is a subequivalence relation of $\cR_I$. For every $\rho \in \R$, we consider the $1$-cocycle
\begin{equation}\label{eq.modified-1-cocycle}
\gamma_\rho : \cR_I \recht \R : \gamma_\rho(x',x) = \sum_{i \in I} (\log \mu_i(x'_i) - \log \mu_i(x_i)) - \sum_{i \in I} \rho (x'_i - x_i) \; .
\end{equation}

\begin{proof}
If $\mu_i(0)$ admits a limit value in $(0,1)$, it follows that \eqref{eq.nonatomic-condition} holds. So by \cite[Theorem 1.1]{AP77}, the action $\cS_I \actson (X,\mu)$ is ergodic.

We denote by
$$\beta : \cS_I \times X \recht \R : \beta(\si,x) = \sum_{i \in I} (\log \mu_i((\si \cdot x)_i) - \log \mu_i(x_i))$$
the logarithm of the Radon-Nikodym cocycle for $\cS_I \actson (X,\mu)$. Consider the Maharam extension $\cS_I \actson X \times \R$ given by $\si \cdot (x,t) = (\si \cdot x, \be(\si,x) + t)$.

Assume that $\lambda \in (0,1)$ is a limit point of $\mu_i(0)$ as $i \recht \infty$. Write $\rho = \log((1-\lambda)/\lambda)$. Let $F \in L^\infty(X \times \R)$ be an $\cS_I$-invariant function. We prove that $F$ is $\gamma_\rho$-invariant, in the sense that
\begin{equation}\label{eq.gamma-rho-invariance}
F(x',\gamma_\rho(x',x) + t) = F(x,t) \quad\text{for a.e.\ $(x',x,t) \in \cR_I \times \R$.}
\end{equation}
Fix $i \in I$ and take a sequence $j_n \in I \setminus \{i\}$ such that $j_n \recht \infty$ and $\lim_n \mu_{j_n}(0) = \lambda$. For every $i,j \in I$, denote by $\si_{i,j}$ the permutation of $i$ and $j$. Identify $(X,\mu) = (\{0,1\},\mu_i) \times (X_0,\mu_0)$, where $(X_0,\mu_0) = \prod_{j \neq i} (\{0,1\},\mu_j)$, and view elements of $X$ as pairs $(x_i,x)$ with $x_i \in \{0,1\}$ and $x \in X_0$. Since $F$ is $\cS_I$-invariant, we have
\begin{equation}\label{eq.intermediate-expr}
F(\si_{i,j_n} \cdot (x_i,x), t) = F(x_i,x,t-R_{i,j_n}(x_i,x_{j_n})) \quad\text{for all $n$ and a.e.\ $(x_i,x,t) \in X \times \R$,}
\end{equation}
where $R_{i,j}(k,l) = \log(\mu_i(l) \, \mu_j(k)) - \log(\mu_i(k) \, \mu_j(l))$. Denote by $\nu$ the probability measure on $\R$ given by $d\nu(t) = (1/2) e^{-|t|} dt$. Whenever $\cF \subset I \setminus \{i\}$ is a finite subset, $F_0 \in L^\infty(\{0,1\} \times \{0,1\}^\cF \times \R)$ and $H \in L^\infty(\{0,1\}^\cF \times \R)$, we have
\begin{multline*}
\int_{X_0 \times \R} F_0(\si_{i,j_n} \cdot (0,x), t) \, H(x,t) \, d\mu_0(x) \, d\nu(t) \\ = \int_{X_0 \times \R} (\mu_{j_n}(0) F_0(0,x,t) + \mu_{j_n}(1) F_0(1,x,t)) \, H(x,t) \, d\mu_0(x) \, d\nu(t)
\end{multline*}
whenever $j_n \not\in \cF$. Since the Radon-Nikodym derivative of $\si_{i,j_n}$ stays bounded, we can approximate $F$ in $L^1$-norm by such an $F_0$ and conclude that
\begin{multline*}
\lim_{n \recht \infty} \int_{X_0 \times \R} F(\si_{i,j_n} \cdot (0,x), t) \, H(x,t) \, d\mu(x) \, d\nu(t) \\ = \int_{X_0 \times \R} (\lambda \, F(0,x,t) + (1-\lambda)\, F(1,x,t)) \, H(x,t) \, d\mu(x) \, d\nu(t)
\end{multline*}
for all $H \in L^\infty(\{0,1\}^\cF \times \R)$ and all finite subsets $\cF \subset I \setminus \{i\}$. Write $R_i = \log(\mu_i(1) / \mu_i(0))$. Since $R_{i,j_n}(0,0) = 0$ and $R_{i,j_n}(0,1) \recht R_i - \rho$, we similarly find that
\begin{multline*}
\lim_{n \recht \infty} \int_{X_0 \times \R} F(0,x,t-R_{i,j_n}(0,x_{j_n})) \, H(x,t) \, d\mu(x) \, d\nu(t) \\ = \int_{X_0 \times \R} (\lambda \, F(0,x,t) + (1-\lambda) \, F(0,x,t-R_i+\rho)) \, H(x,t) \, d\mu(x) \, d\nu(t) \; .
\end{multline*}
In combination with \eqref{eq.intermediate-expr}, it follows that
$$\lambda \, F(0,x,t) + (1-\lambda) \, F(1,x,t) = \lambda \, F(0,x,t) + (1-\lambda) \, F(0,x,t-R_i+\rho)$$
so that $F(1,x,t) = F(0,x,t-R_i+\rho)$ for a.e.\ $(x,t) \in X_0 \times \R$. Denoting by
$$\tau_i : X \recht X : (\tau_i(x))_j = x_j \;\;\text{if $j \neq i$, and}\;\; (\tau_i(x))_i = 1 - x_i$$
the transformation given by changing the $i$'th coordinate, this means that
$$F(\tau_i(x),\gamma_\rho(\tau_i(x),x) + t) = F(x,t) \quad\text{for a.e. $(x,t) \in X \times \R$.}$$
Since the graphs of the transformations $(\tau_i)_{i \in I}$ generate the equivalence relation $\cR_I$, it follows that \eqref{eq.gamma-rho-invariance} holds.

Under the hypothesis of the first point of the proposition, \cite[Proposition 1.5]{DL16} says that the $1$-cocycle $\gamma_\rho$ is ergodic. This precisely means that $F$ must be essentially constant, so that $\cS_I \actson (X,\mu)$ is of type III$_1$.

When also $\lambda' \in (0,1)$ is a limit value of $\mu_i(0)$ for $i \recht \infty$, we define $\rho' = \log((1-\lambda')/\lambda')$. Then \eqref{eq.gamma-rho-invariance} holds for both $\rho$ and $\rho'$. It follows that $F(x,t) = F(x,t-\rho+\rho')$ for a.e.\ $(x,t) \in X \times \R$. So, $\exp(\rho-\rho')$ belongs to the ratio set of $\cS_I \actson (X,\mu)$.
\end{proof}

\subsection{Nonsingular Bernoulli actions}

Given a countable group $G$ and a family of probability measures $(\mu_g)_{g \in G}$ on $\{0,1\}$ satisfying $\mu_g(0) \in (0,1)$ for all $g \in G$, we consider the Bernoulli action given by
\begin{equation}\label{eq.nonsingular-bernoulli}
G \actson (X,\mu) = \prod_{g \in G} (\{0,1\},\mu_g) : (g \cdot x)_h = x_{g^{-1} h} \; .
\end{equation}
Occasionally, we refer to \eqref{eq.nonsingular-bernoulli} as the left Bernoulli action, while the right Bernoulli action is given by $(g \cdot x)_{h} = x_{hg}$.

By Kakutani's criterion for the equivalence of product measures in \cite{Kak48}, the Bernoulli action $G \actson (X,\mu)$ is nonsingular if and only if
\begin{equation}\label{eq.criterion-nonsingular}
\sum_{h \in G} \bigl(\sqrt{\vphantom{\mu_h}\smash[b]{\mu_{gh}(0)}} - \sqrt{\mu_h(0)}\bigr)^2 + \sum_{h \in G} \bigl(\sqrt{\vphantom{\mu_h}\smash[b]{\mu_{gh}(1)}} - \sqrt{\mu_h(1)}\bigr)^2 < +\infty \quad\text{for all $g \in G$.}
\end{equation}
If there exist a $\delta > 0$ such that $\mu_g(0) \in [\delta,1-\delta]$ for all $g \in G$, then \eqref{eq.criterion-nonsingular} is equivalent with the condition
$$\sum_{h \in G} (\mu_{gh}(0) - \mu_h(0))^2 < +\infty \quad\text{for all $g \in G$.}$$
In that case, $c : G \recht \ell^2(G) : c_g(h) =  \mu_h(0) - \mu_{g^{-1}h}(0)$ is a well defined $1$-cocycle for $G$ with values in the left regular representation.

The product measure $\mu$ in \eqref{eq.nonsingular-bernoulli} is nonatomic if and only if
$$\sum_{g \in G} \min \{\mu_g(0),\mu_g(1)\} =+\infty \; .$$

For completeness, we include a proof of the following elementary lemma.

\begin{lemma}\label{lem.essential-free}
Let $G \actson (X,\mu)$ be a Bernoulli action as in \eqref{eq.nonsingular-bernoulli}. Assume that $\mu$ is nonatomic and that $G \actson (X,\mu)$ is nonsingular. Then $G \actson (X,\mu)$ is essentially free.
\end{lemma}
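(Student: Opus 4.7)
The fixed-point set $F_g := \{x \in X : g \cdot x = x\}$ of an element $g \in G \setminus \{e\}$ consists precisely of those $x$ that are constant on each orbit of the left action of $\langle g \rangle$ on $G$; these orbits partition $G$ into pieces of cardinality equal to the order $n := |g| \in \{2, 3, \ldots, \infty\}$, since left multiplication by $\langle g \rangle$ on $G$ is free. The plan is to prove $\mu(F_g) = 0$ for each such $g$ by invoking the second Borel--Cantelli lemma on an independent family of events that force non-constancy on at least one orbit.

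For each orbit $O$ I would associate events that only depend on coordinates in $O$. When $n < \infty$, take a single event per orbit, namely $A^O := \{x : x|_O \text{ is not constant}\}$. When $n = \infty$, enumerate $O = \{g^k h_0\}_{k \in \Z}$ for some base point $h_0 \in O$ and take the pair events $A_k^O := \{x : x_{g^{2k} h_0} \neq x_{g^{2k+1} h_0}\}$ for $k \in \Z$. In both cases the resulting family is mutually independent, since distinct orbits occupy disjoint coordinates and, in the infinite-order case, the selected pairs within each orbit are pairwise disjoint. The crucial pointwise estimate is
\begin{equation*}
\mu_h(0)\,\mu_{h'}(1) + \mu_h(1)\,\mu_{h'}(0) \;\geq\; \min\{\mu_h(0), \mu_h(1)\} \quad\text{for distinct } h, h' \in G,
\end{equation*}
obtained by pulling out $\min\{\mu_h(0), \mu_h(1)\}$ and using $\mu_{h'}(0) + \mu_{h'}(1) = 1$. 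Applied to any pair inside $O$, this gives $\mu(A^O) \geq \max_{h \in O} \min\{\mu_h(0), \mu_h(1)\} \geq n^{-1} \sum_{h \in O} \min\{\mu_h(0), \mu_h(1)\}$ in the finite-order case, and
\begin{equation*}
\mu(A_k^O) \;\geq\; \frac{1}{2}\bigl(\min\{\mu_{g^{2k} h_0}(0), \mu_{g^{2k} h_0}(1)\} + \min\{\mu_{g^{2k+1} h_0}(0), \mu_{g^{2k+1} h_0}(1)\}\bigr)
\end{equation*}
in the infinite-order case, where the chosen pairs exhaust the whole orbit.

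Summing the chosen events and using the nonatomicity hypothesis $\sum_{h \in G} \min\{\mu_h(0), \mu_h(1)\} = +\infty$ then makes the total divergent. The second Borel--Cantelli lemma places full measure on the union of these events, while $F_g$ sits in its complement, so $\mu(F_g) = 0$; a countable union over $g \in G \setminus \{e\}$ yields essential freeness. I expect no serious obstacle: the nonsingularity hypothesis is actually not used, and the only care needed is the combinatorial bookkeeping to enumerate each infinite orbit as a $\Z$-indexed sequence so that consecutive even/odd pairs are pairwise disjoint, which is immediate once a base point is fixed.
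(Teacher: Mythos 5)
Your proof is correct, but it follows a genuinely different route from the paper's. The paper fixes $g \neq e$, chooses coset representatives so that $G = \bigsqcup_{n} g^n H$, groups the coordinates into the blocks indexed by the cosets $g^n H$ (even/odd in the infinite-order case, the $n$ cosets in the finite-order case), and observes that $g$ then acts by cyclically permuting these blocks composed with coordinatewise bijections; a fixed point must therefore lie on the graph of a Borel map between the block factors, and such a graph is null by Fubini once one of the factors is nonatomic. Your argument instead decomposes $G$ into $\langle g\rangle$-orbits, notes that $F_g$ is exactly the set of configurations constant on each orbit, and kills it with the second Borel--Cantelli lemma applied to mutually independent ``disagreement'' events whose probabilities sum to $+\infty$ by the nonatomicity hypothesis $\sum_{h} \min\{\mu_h(0),\mu_h(1)\} = +\infty$; all the estimates you give ($\mu_h(0)\mu_{h'}(1)+\mu_h(1)\mu_{h'}(0) \geq \min\{\mu_h(0),\mu_h(1)\}$, the max-versus-average bound in the finite-order case, and the exhaustion of each infinite orbit by disjoint consecutive pairs) check out. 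What each approach buys: the paper's proof is shorter and leans on the standard fact that graphs of Borel maps into nonatomic spaces are null, while yours is fully quantitative and self-contained, treats the finite- and infinite-order cases uniformly, and makes explicit that nonsingularity plays no role. Your route also sidesteps a small point the paper glosses over, namely that nonatomicity of $\mu$ only directly guarantees that at least one (not both) of the block factors $\mu_0,\mu_1$ is nonatomic---which is all the graph argument actually needs, but your orbitwise bookkeeping never has to address it.
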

\begin{proof}
Let $g \neq e$. We have to prove that $\{x \in X \mid g \cdot x = x \}$ has measure zero. First assume that $g$ has infinite order. Choosing representatives for the left cosets of the subgroup $g^\Z \subset G$, we find a subset $H \subset G$ such that $G = \bigsqcup_{n \in \Z} g^n H$. Define $I = \bigcup_{n \; \text{even}} g^n H$ and $J = \bigcup_{n \; \text{odd}} g^n H$. Identify $(X,\mu) = (X_0,\mu_0) \times (X_1,\mu_1)$, where
$$(X_0,\mu_0) = \prod_{h \in I} (\{0,1\},\mu_h) \quad\text{and}\quad (X_1,\mu_1) = \prod_{h \in J} (\{0,1\},\mu_h) \; .$$
We then find nonsingular bijections $\al : X_0 \recht X_1$ and $\be : X_1 \recht X_0$ such that $g \cdot (x_0,x_1) = (\be(x_1),\al(x_0))$. Since $\mu$ is nonatomic, both $\mu_0$ and $\mu_1$ are nonatomic. If $g \cdot (x_0,x_1) = (x_0,x_1)$, we must have that $x_1 = \al(x_0)$ and the set of points $\{(x_0,\al(x_0)) \mid x_0 \in X_0\}$ has measure zero.

If $g$ has finite order $n \geq 2$, we find a subset $H \subset G$ such that $G = \bigsqcup_{k=0}^{n-1} g^k H$. We define $(X_k,\mu_k)$ by taking the product over $g^k H$ and then reason similarly as above.
\end{proof}

\section{Ergodicity and type for Bernoulli actions of abelian groups}

In this section, we prove the first part of Theorem \ref{thm-A} and Theorem \ref{thm-B}. The key technical lemma is the following result, saying that a nonsingular Bernoulli action of an abelian group $G$ is either dissipative, or has the property that any $G$-invariant function for the Maharam extension is automatically invariant under the Maharam extension of the permutation action of $\cS_G$. We actually prove this general dichotomy lemma for arbitrary amenable groups $G$ and Bernoulli actions for which also the right shift is nonsingular.

\begin{lemma}\label{lem.automatic-invariance-permutation-group}
Let $G$ be an amenable group and $G \actson (X,\mu) = \prod_{g \in G} (\{0,1\},\mu_g)$ any nonsingular Bernoulli action, with $\mu_g(0) \in (0,1)$ for all $g \in G$. Assume that also the right Bernoulli shift is nonsingular. Assume that $\mu$ is nonatomic and that $G \actson (X,\mu)$ is not dissipative.

If $G \actson (Y,\eta)$ is any probability measure preserving (pmp) action and $G \actson X \times \R \times Y$ is its diagonal product with the Maharam extension of $G \actson (X,\mu)$, then any $G$-invariant function $F \in L^\infty(X \times \R \times Y)$ satisfies
\begin{equation}\label{eq.extra-invariance}
F(\si \cdot x , t + \be(\si,x), y) = F(x,t,y) \quad\text{for all $\si \in \cS_G$ and a.e.\ $(x,t,y) \in X \times \R \times Y$,}
\end{equation}
where $\be(\si,x) = \log(d\mu(\si \cdot x)/d\mu(x))$.
\end{lemma}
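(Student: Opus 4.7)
The plan is to reduce \eqref{eq.extra-invariance} to the case where $\sigma$ is a transposition $\tau_{a,b} \in \cS_G$, since such transpositions generate $\cS_G$. A direct computation shows that $g \tau_{a,b} = \tau_{ga,gb} g$ as nonsingular bijections of $(X,\mu)$, and the Radon–Nikodym cocycles satisfy the identity $\alpha(g,\tau_{a,b}x) + \beta(\tau_{a,b},x) = \beta(\tau_{ga,gb},gx) + \alpha(g,x)$. Hence, for a $G$-invariant function $F \in L^\infty(X \times \R \times Y)$, invariance under the Maharam extension of $\tau_{a,b}$ is \emph{equivalent} to invariance under the Maharam extension of $\tau_{ga,gb}$ for any $g \in G$. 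So for each fixed $a \in G$, it suffices to verify invariance under $\tau_{g_n, g_n a}$ for some cleverly chosen sequence $g_n \to \infty$ in $G$.

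The main quantitative input is a marginal estimate. Nonsingularity of both the left and the right Bernoulli shifts yields $\sum_g \bigl(\sqrt{\mu_{ga}(0)} - \sqrt{\mu_g(0)}\bigr)^2 < \infty$ and analogously for the $1$-marginals. Combined with the nonatomicity condition $\sum_g \min\{\mu_g(0),\mu_g(1)\} = \infty$, I would aim to show that along a $\min\{\mu_g(0),\mu_g(1)\}$-weighted density-one subset $D \subset G$ the \emph{ratios} $\mu_{ga}(0)/\mu_g(0)$ and $\mu_{ga}(1)/\mu_g(1)$ tend to $1$ as $g \to \infty$ in $D$. The fact that it is the ratio, not the difference, that must be controlled is crucial: it is precisely what allows the argument to proceed when $\mu_g(0) \to 0$. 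For $g \in D$ sufficiently large, the Maharam cocycle
\[
\beta(\tau_{g,ga},x) \;=\; \log\bigl(\mu_g(x_{ga})\mu_{ga}(x_g)\bigr) - \log\bigl(\mu_g(x_g)\mu_{ga}(x_{ga})\bigr)
\]
then tends to $0$ pointwise in $x$.

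Given such a sequence $g_n \in D$ tending to infinity, the transposition $\tau_{g_n, g_n a}$ modifies only the two coordinates $g_n, g_n a$, both escaping to infinity, and its Maharam extension is asymptotically trivial. Approximating $F$ in $L^2$ by functions on $X \times \R \times Y$ depending on only finitely many coordinates of $X$, and using amenability of $G$ to average along a Følner sequence, together with non-dissipativity of $G \actson (X,\mu)$ to ensure that such averages recapture $F$, one shows that the $L^2$-distance between $F$ and $F$ precomposed with the Maharam extension of $\tau_{g_n, g_n a}$ tends to zero. By the equivalence from paragraph~1, this forces exact invariance under $\tau_{e,a}$; varying $a$ and composing then gives invariance under all of $\cS_G$. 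The main obstacle is the marginal-ratio estimate of paragraph~2: in the regime $\mu_g(0) \to 0$, the $\ell^2$-summability of the differences $\sqrt{\mu_{ga}(0)} - \sqrt{\mu_g(0)}$ does not prevent the ratio from oscillating on sparse sets, so a careful density argument weighted by $\min\{\mu_g(0),\mu_g(1)\}$ is needed to produce the "good" set $D$ and the sequence $g_n$.
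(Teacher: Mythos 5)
Your outline tracks the paper's proof closely: reduction to transpositions, the intertwining $g\,\si_{a,b} = \si_{ga,gb}\,g$ together with the corresponding cocycle identity, the two-sided Kakutani estimate showing that $\mu_{ga}(i)/\mu_{gb}(i)\to 1$ outside a ``bad'' set $W$ with $\sum_{g\in W}\mu_{gk}(i)<+\infty$, and an ergodic-averaging step that exploits amenability and conservativity. (One small omission: the hypothesis is only ``not dissipative'', and you need full conservativity for the averaging; the paper upgrades this by showing the dissipative part is $\cS_G$-invariant and invoking the Aldous--Pitman ergodicity of the permutation action.)

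However, your final two steps contain a genuine gap. First, for a nonsingular action, unweighted F{\o}lner averages of $F(g\cdot(x,t,y))$ in $L^2$ of a fixed probability measure do \emph{not} recapture $F$ (equivalently, the conditional expectation onto the invariants); one must use Radon--Nikodym--weighted \emph{ratio} averages, and their a.e.\ convergence to $E(F)$ is a nontrivial Hurewicz-type theorem (the paper cites \cite[Theorem A.1]{D18}). Second, and more seriously, the deduction ``$\|F\circ\widetilde{\tau}_{g_n,g_na}-F\|\to 0$ forces exact invariance under $\widetilde{\tau}_{e,a}$ by the equivalence of paragraph 1'' does not go through: the pointwise equivalence identifies the defect of $\widetilde{\tau}_{e,a}$ at $(x,t,y)$ with the defect of $\widetilde{\tau}_{g_n,g_na}$ at $g_n\cdot(x,t,y)$, but the reference probability measure $\mutil\times\eta$ is not preserved by $g_n$, so
$$\|F\circ\widetilde{\tau}_{e,a}-F\|_{1}=\int \bigl|F(\widetilde{\tau}_{g_n,g_na}(w))-F(w)\bigr|\,\frac{d\bigl(g_n^{-1}\cdot(\mutil\times\eta)\bigr)}{d(\mutil\times\eta)}(w)\,d(\mutil\times\eta)(w)\,,$$
and the Radon--Nikodym factor is not uniformly bounded in $n$; a vanishing defect for $\tau_{g_n,g_na}$ therefore does not transport back to $\tau_{e,a}$. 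The paper avoids this by never transporting norms at all: it compares the ratio averages at the two points $(x,t,y)$ and $(\si_{a,b}\cdot x,\,t+\be(\si_{a,b},x),\,y)$ \emph{simultaneously}, restricted to the good set $V$ of $g$'s where $|R_{ga,gb}|\le\delta$ (after checking, via the summability of Claim 1, that the contribution of $G\setminus V$ to the sums of Radon--Nikodym derivatives is a.e.\ finite and hence negligible in a conservative action). On $V$ the integrands agree up to $\eps$ and the weights agree up to $e^{\pm 2\eps}$, so the two limits $E(F)$ agree up to an error that tends to $0$ with $\eps$. Your sketch would need to be reorganized along these lines to close.
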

\begin{proof}
For every $a,b \in G$ with $a \neq b$, denote by $\si_{a,b} \in \cS_G$ the permutation of $a$ and $b$. Denote, for $x_0,x_1 \in \{0,1\}$,
$$D_{a,b}(x_0,x_1) = \frac{\mu_a(x_1) \, \mu_b(x_0)}{\mu_a(x_0) \, \mu_b(x_1)} \quad\text{so that}\quad \frac{d\mu(\si_{a,b} \cdot x)}{d\mu(x)} = D_{a,b}(x_a,x_b) \quad\text{for all $x \in X$.}$$
We then get that
\begin{equation}\label{eq.basic-formula-flip}
\frac{d\mu(g \cdot (\si_{a,b} \cdot x))}{d\mu(\si_{a,b} \cdot x)} = \frac{d \mu(g \cdot x)}{d\mu(x)} \; D_{ga,gb}(x_a,x_b) \; D_{a,b}(x_a,x_b)^{-1} \; .
\end{equation}
An important step in the proof of the lemma is to show that for `most' $g \in G$ tending to infinity, $D_{ga,gb}(x_0,x_1)$ is close to $1$.

{\bf Claim 1.} Fix $a,b \in G$. For every $i \in \{0,1\}$ and $\eps > 0$, the set
$$W_{i,\eps} = \Bigl\{ g \in G \Bigm| \; \Bigl| \sqrt{\frac{\mu_{ga}(i)}{\mu_{gb}(i)}} - 1 \Bigr| > \eps \Bigr\} \quad\text{satisfies}\quad \sum_{g \in W_{i,\eps}} \mu_{kgh}(i) < +\infty \quad\text{for all $k,h \in G$.}$$
Note that
\begin{align*}
\eps^2 \; \sum_{g \in W_{i,\eps}} \mu_{gb}(i) & \leq \sum_{g \in W_{i,\eps}} \mu_{gb}(i) \; \Bigl( \sqrt{\frac{\mu_{ga}(i)}{\mu_{gb}(i)}} - 1 \Bigr)^2 \\
&\leq \sum_{g \in G} \Bigl(\sqrt{\mu_{ga}(i)} - \sqrt{\mu_{gb}(i)}\Bigr)^2 < +\infty \; ,
\end{align*}
where the last inequality follows from the nonsingularity of the right Bernoulli action and the Kakutani criterion in \cite{Kak48}. By the nonsingularity of the left and the right Bernoulli action, we also have that
$$\sum_{g \in W_{i,\eps}} \Bigl(\sqrt{\mu_{kgh}(i)} - \sqrt{\mu_{gb}(i)}\Bigr)^2 \leq \sum_{g \in G} \Bigl(\sqrt{\mu_{kgh}(i)} - \sqrt{\mu_{gb}(i)}\Bigr)^2 < +\infty$$
for all $k,h \in G$. Since $(\sqrt{\mu_{gb}(i)})_{g \in W_{i,\eps}}$ belongs to $\ell^2(W_{i,\eps})$, also $(\sqrt{\mu_{kgh}(i)})_{g \in W_{i,\eps}}$ belongs to $\ell^2(W_{i,\eps})$ and claim 1 is proven.

{\bf Claim 2.} The action $G \actson (X,\mu)$ is either dissipative or conservative.

Denote by
$$\cD = \Bigl\{ x \in X \Bigm| \sum_{g \in G} \frac{d\mu(g \cdot x)}{d\mu(x)} < +\infty \Bigr\}$$
the dissipative part of $X$. We prove that $\cD$ is invariant under the action $\cS_G \actson (X,\mu)$. By \cite[Theorem 1.1]{AP77}, this last action is ergodic, so that claim 2 follows.

Fix $a,b \in G$. It suffices to prove that $\mu(\si_{a,b} \cdot \cD \vartriangle \cD) = 0$. Take $\eps = 1/2$ and define the sets $W_i = W_{i,1/2}$ as in claim 1. When $g \in G \setminus (W_0 \cup W_1)$, we have that $\mu_{ga}(i)/\mu_{gb}(i) \in [1/4,4]$ for all $i \in \{0,1\}$, so that $1/16 \leq D_{ga,gb}(x_0,x_1) \leq 16$ for all $x_0,x_1 \in \{0,1\}$. By \eqref{eq.basic-formula-flip}, the set of $x \in X$ satisfying
$$\sum_{g \in G \setminus (W_0 \cup W_1)} \frac{d\mu(g \cdot x)}{d\mu(x)} < +\infty$$
is $\si_{a,b}$-invariant. To conclude the proof of claim 2, it then suffices to prove that
\begin{equation}\label{eq.to-prove-claim-2}
\sum_{g \in W_i} \frac{d\mu(g \cdot x)}{d\mu(x)} < +\infty \quad\text{for all $i \in \{0,1\}$ and a.e.\ $x \in X$.}
\end{equation}
For $k \in G$ and $i \in \{0,1\}$, write $\cU_{k,i} = \{x \in X \mid x_k = i\}$. For every $k \in G$ and $i \in \{0,1\}$, we have
$$\int_X \sum_{g \in W_i} \frac{d\mu(g \cdot x)}{d\mu(x)} \; 1_{\cU_{k,i}}(x) \; d\mu(x) = \sum_{g \in W_i} \mu(g \cdot \cU_{k,i}) = \sum_{g \in W_i} \mu(\cU_{gk,i}) = \sum_{g \in W_i} \mu_{gk}(i) < +\infty \; ,$$
where the last inequality follows from claim 1. Since $\bigcup_{k \in G} \cU_{k,i}$ has a complement of measure zero, we conclude that \eqref{eq.to-prove-claim-2} holds. So claim 2 is proven.

For the rest of the proof, we may now assume that $G \actson (X,\mu)$ is conservative. Choose any pmp action $G \actson (Y,\eta)$ and consider the diagonal product $G \actson X \times \R \times Y$ with the Maharam extension of $G \actson (X,\mu)$. We have to prove that \eqref{eq.extra-invariance} holds for any $G$-invariant function in $L^\infty(X \times \R \times Y)$. The Maharam extension preserves the infinite measure $d\mu \times e^{-t} \, dt$. We replace this measure by an absolutely continuous probability measure, which is no longer invariant, but still has good regularity properties. By doing this, as we are working on a probability space, all bounded invariant functions are integrable and this fact is crucial for our method which makes use of ratio ergodic theorems.

Define the probability measure $\nu$ on $\R$ given by $d\nu(t) = (1/2) \exp(-|t|) \, dt$. Write $\mutil = \mu \times \nu$. Denote by $\|\,\cdot\,\|_1$ the $L^1$-norm on $L^\infty(X \times \R \times Y)$ w.r.t.\ the probability measure $\mutil \times \eta$. Denote by $E : L^\infty(X \times \R \times Y) \recht L^\infty(X \times \R \times Y)^G$ the unique conditional expectation preserving the probability measure $\mutil \times \eta$. Denote by $\cC \subset L^\infty(X \times \R \times Y)$ the set of Borel functions $F : X \times \R \times Y \recht [0,1]$ satisfying the following two properties.
\begin{itemlist}
\item There exists a finite subset $\cF \subset G$ such that $F \in L^\infty(\{0,1\}^\cF \times \R \times Y)$.
\item The function $F$ is uniformly continuous in the $\R$-variable.
\end{itemlist}
Since the linear span of $\cC$ is $\|\,\cdot\,\|_1$-dense in $L^\infty(X \times \R \times Y)$, it suffices to prove that for all $F \in \cC$, we have that
\begin{equation}\label{eq.goal}
|(E(F))(\si_{a,b} \cdot x , t + \be(\si_{a,b},x), y) - (E(F))(x,t,y) | \leq \exp(4\eps)(1+\eps) - 1
\end{equation}
for all $\eps > 0$, $a,b \in G$ and a.e.\ $(x,t,y) \in X \times \R \times Y$. Fix $F \in \cC$, $\eps > 0$ and $a,b \in G$. Take $0 < \delta < \eps$ such that $|F(x,t,y)-F(x,s,y)| \leq \eps$ for all $x \in X$, $y \in Y$ and $s,t \in \R$ with $|s-t| \leq \delta$.

For $i,j \in \{0,1\}$ and $h,k \in G$, write $R_{h,k}(i,j) = \log D_{h,k}(i,j)$. Define
$$V = \bigl\{g \in G \bigm| \; |R_{ga,gb}(i,j)| \leq \delta \;\;\text{for all $i,j \in \{0,1\}$}\;\bigr\} \; .$$

{\bf Claim 3.} For a.e.\ $(x,t) \in X \times \R$, we have
$$
\sum_{g \in G \setminus V} \frac{d\mutil(g \cdot (x,t))}{d\mutil(x,t)} < +\infty \; .
$$

Using the notation of claim 1, take $\eta > 0$ small enough such that $G \setminus V \subset W_{0,\eta} \cup W_{1,\eta}$. We now proceed as in the proof of \eqref{eq.to-prove-claim-2}. Fix $i \in \{0,1\}$, $k \in G$ and write $\cU_{k,i} = \{x \in X \mid x_k = i\}$. Then,
\begin{align*}
\int_{X \times \R} \sum_{g \in W_{i,\eta}} \frac{d\mutil(g \cdot (x,t))}{d\mutil(x,t)} \; & 1_{\cU_{k,i} \times \R}(x,t) \; d\mutil(x,t)
 = \sum_{g \in W_{i,\eta}} \mutil(g \cdot (\cU_{k,i} \times \R)) \\ &= \sum_{g \in W_{i,\eta}} \mutil(\cU_{gk,i} \times \R) = \sum_{g \in W_{i,\eta}} \mu(\cU_{gk,i}) = \sum_{g \in W_{i,\eta}} \mu_{gk}(i) < \infty \; ,
\end{align*}
by claim 1. Since $\bigcup_{k \in G} \cU_{k,i} \times \R$ has a complement of measure zero, we conclude that
$$\sum_{g \in W_{i,\eta}} \frac{d\mutil(g \cdot (x,t))}{d\mutil(x,t)} < \infty \quad\text{for all $i \in \{0,1\}$ and a.e.\ $(x,t) \in X \times \R$.}$$
Since $G \setminus V \subset W_{0,\eta} \cup W_{1,\eta}$, claim 3 is proven.

Applying \cite[Theorem A.1]{D18} to the nonsingular action $G \actson (X \times \R \times Y,\mutil \times \eta)$ and the function $F$, we find an increasing sequence of finite subsets $\cG_n \subset G$ with $\bigcup_n \cG_n = G$ such that
$$
\lim_n \frac{\dis \sum_{g \in \cG_n} \frac{d\mutil(g \cdot (x,t))}{d\mutil(x,t)} \; F(g \cdot (x,t,y))}{\dis \sum_{g \in \cG_n} \frac{d\mutil(g \cdot (x,t))}{d\mutil(x,t)}} = (E(F))(x,t,y)
$$
for a.e.\ $(x,t,y) \in X \times \R \times Y$. Take a finite subset $\cF \subset G$ such that $F \in L^\infty(\{0,1\}^\cF \times \R \times Y)$. Define $U = V \setminus (\cF a^{-1} \cup \cF b^{-1})$. Since $G \actson X \times \R \times Y$ is conservative, using claim 3 and the finiteness of $\cF$, we also get that
\begin{equation}\label{eq.Dan}
\lim_n \frac{\dis \sum_{g \in \cG_n \cap U} \frac{d\mutil(g \cdot (x,t))}{d\zeta(x,t)} \; F(g \cdot (x,t,y))}{\dis \sum_{g \in \cG_n \cap U} \frac{d\mutil(g \cdot (x,t))}{d\zeta(x,t)}} = (E(F))(x,t,y)
\end{equation}
for any probability measure $\zeta$ that is equivalent with $\mutil$ and a.e.\ $(x,t,y) \in X \times \R \times Y$. We use \eqref{eq.Dan} to estimate $(E(F))(\si_{a,b} \cdot x, t + \be(\si_{a,b},x),y)$. Recall that $\be(\si_{a,b},x) = R_{a,b}(x_a,x_b)$.

Write
$$\al(g,x) = \log \frac{d\mu(g \cdot x)}{d\mu(x)} \quad\text{so that}\quad g \cdot (x,t,y) = (g \cdot x, t + \al(g,x), g \cdot y) \; .$$
Note that $g \cdot (\si_{a,b} \cdot x) = \si_{ga,gb} \cdot (g \cdot x)$. When $g \in U$, we have $ga,gb \not\in \cF$. Using \eqref{eq.basic-formula-flip}, we get that
$$F(g \cdot (\si_{a,b} \cdot x,t+R_{a,b}(x_a,x_b),y)) = F(g \cdot x, t + \al(g,x) + R_{ga,gb}(x_a,x_b),g \cdot y)$$
for all $g \in U$ and a.e.\ $(x,t,y)$. Since $|R_{ga,gb}(x_a,x_b)| \leq \delta$ for all $g \in U \subset V$, we conclude that
\begin{equation}\label{eq.new-step-1}
|F(g \cdot (\si_{a,b} \cdot x, t + \be(\si_{a,b},x),y)) - F(g \cdot (x,t,y))| \leq \eps
\end{equation}
for all $g \in U$ and a.e.\ $(x,t,y)$.

Note that $d\nu(t+s)/d\nu(t) \leq \exp(|s|)$ for all $s,t \in \R$. Applying \eqref{eq.basic-formula-flip} twice, we get that
$$
\frac{d\mutil(g \cdot (\si_{a,b} \cdot x,t+\be(\si_{a,b},x)))}{d\mutil(x,t)} = \frac{d\mu(g \cdot x)}{d\mu(x)} \; D_{ga,gb}(x_a,x_b) \; \frac{d\nu(t + \al(g,x)+ R_{ga,gb}(x_a,x_b))}{d\nu(t)}
$$
so that for all $g \in U$ and all $(x,t) \in X \times \R$,
\begin{equation}\label{eq.new-step-2}
\exp(-2\eps) \, \frac{d\mutil(g \cdot (x,t))}{d\mutil(x,t)} \leq \frac{d\mutil(g \cdot (\si_{a,b} \cdot x,t+\be(\si_{a,b},x)))}{d\mutil(x,t)} \leq \exp(2\eps) \, \frac{d\mutil(g \cdot (x,t))}{d\mutil(x,t)} \; .
\end{equation}

Since $\|F\|_\infty \leq 1$, a combination of \eqref{eq.Dan}, \eqref{eq.new-step-1} and \eqref{eq.new-step-2} implies that \eqref{eq.goal} holds. This concludes the proof of the lemma.
\end{proof}

Using Lemma \ref{lem.automatic-invariance-permutation-group}, we immediately get the following dichotomy. Since for abelian groups $G$, the left Bernoulli shift by $g \in G$ equals the right Bernoulli shift by $g^{-1}$, the first part of Theorem \ref{thm-A} is a direct consequence of the following theorem.

\begin{theorem}\label{thm.dichotomy-dissipative-weakly-mixing}
Let $G$ be an amenable group and $G \actson (X,\mu) = \prod_{g \in G} (\{0,1\},\mu_g)$ any nonsingular Bernoulli action, with $\mu_g(0) \in (0,1)$ for all $g \in G$. Assume that $\mu$ is nonatomic and that also the right Bernoulli shift is nonsingular.

Then either $G \actson (X,\mu)$ is dissipative or $G \actson (X,\mu)$ is weakly mixing.
\end{theorem}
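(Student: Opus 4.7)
The proof is essentially a corollary of Lemma \ref{lem.automatic-invariance-permutation-group}, combined with the ergodicity of the permutation action $\cS_G \actson (X,\mu)$ provided by \cite[Theorem 1.1]{AP77}. The plan is as follows.

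Assume that $G \actson (X,\mu)$ is not dissipative. To prove weak mixing, I fix an arbitrary ergodic pmp action $G \actson (Y,\eta)$ and a $G$-invariant function $F_0 \in L^\infty(X \times Y)$; the goal is to show $F_0$ is essentially constant. The trick is to lift $F_0$ to a function on $X \times \R \times Y$ that is constant in the $\R$-coordinate, namely $F(x,t,y) := F_0(x,y)$. Since the Maharam extension $G \actson X \times \R$ acts on the $t$-coordinate only by translation and $F$ does not depend on $t$, the function $F$ is $G$-invariant for the diagonal action of $G$ on $X \times \R \times Y$ with the Maharam extension.

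I now apply Lemma \ref{lem.automatic-invariance-permutation-group} to $F$. The conclusion is that $F(\si\cdot x, t+\be(\si,x), y) = F(x,t,y)$ for every $\si \in \cS_G$ and a.e.\ $(x,t,y)$. Since $F$ does not depend on $t$, this reduces to $F_0(\si \cdot x, y) = F_0(x,y)$ for every $\si \in \cS_G$ and a.e.\ $(x,y) \in X \times Y$. By Fubini, for $\eta$-a.e.\ $y \in Y$ the slice $x \mapsto F_0(x,y)$ is $\cS_G$-invariant. Since $\mu$ is nonatomic, condition \eqref{eq.nonatomic-condition} holds, and by \cite[Theorem 1.1]{AP77} the permutation action $\cS_G \actson (X,\mu)$ is ergodic. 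Hence each such slice is essentially constant, so there exists $f \in L^\infty(Y,\eta)$ with $F_0(x,y) = f(y)$ for a.e.\ $(x,y)$.

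Finally, the $G$-invariance of $F_0$ together with the probability-measure-preserving action of $G$ on $(Y,\eta)$ gives $f(g \cdot y) = f(y)$ for all $g \in G$ and $\eta$-a.e.\ $y$, so $f$ is $G$-invariant. Ergodicity of $G \actson (Y,\eta)$ forces $f$ to be constant, whence $F_0$ is constant. This proves that the diagonal action $G \actson X \times Y$ is ergodic, so $G \actson (X,\mu)$ is weakly mixing. There is no substantial obstacle here: once the heavy lifting is done in Lemma \ref{lem.automatic-invariance-permutation-group} (in particular Claim 2 there, which gives the dissipative/conservative dichotomy, and the main body which upgrades $G$-invariance to $\cS_G$-invariance), the deduction of the dichotomy in Theorem \ref{thm.dichotomy-dissipative-weakly-mixing} is just a two-step ergodicity argument.
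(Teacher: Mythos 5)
Your argument is correct and is essentially identical to the paper's own proof: lift the $G$-invariant function on $X\times Y$ to a $t$-independent function on $X\times\R\times Y$, apply Lemma \ref{lem.automatic-invariance-permutation-group} to get $\cS_G$-invariance, and then conclude via the ergodicity of $\cS_G\actson(X,\mu)$ from \cite[Theorem 1.1]{AP77} and the ergodicity of $G\actson(Y,\eta)$. No gaps.
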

\begin{proof}
Assume that $G \actson (X,\mu)$ is not dissipative and that $G \actson (Y,\eta)$ is an arbitrary ergodic pmp action. Let $F \in L^\infty(X \times Y)$ be invariant under the diagonal action of $G$. In the context of Lemma \ref{lem.automatic-invariance-permutation-group}, we view $F \in L^\infty(X \times \R \times Y)$ as a $G$-invariant function that does not depend on the $\R$-variable. It follows from Lemma \ref{lem.automatic-invariance-permutation-group} that $F(\si \cdot x,y) = F(x,y)$ for all $\si \in \cS_G$ and a.e.\ $(x,y) \in X \times Y$. By \cite[Theorem 1.1]{AP77}, the action $\cS_G \actson (X,\mu)$ is ergodic. Therefore, $F \in 1 \ot L^\infty(Y)$. Since $G \actson (Y,\eta)$ is ergodic, it follows that $F$ is essentially constant. So, $G \actson X \times Y$ is ergodic, meaning that $G \actson (X,\mu)$ is weakly mixing.
\end{proof}

We now use Lemma \ref{lem.automatic-invariance-permutation-group} to prove the following slightly more precise version of Theorem \ref{thm-B}. Recall that a nonsingular ergodic action $G \actson (X,\mu)$ is said to be of \emph{stable type $T$} if for any ergodic pmp action $G \actson (Y,\eta)$, the diagonal action $G \actson X \times Y$ is of type $T$.

\begin{theorem}\label{thm.type-abelian}
Let $G$ be an infinite abelian group and $G \actson (X,\mu) = \prod_{g \in G} (\{0,1\},\mu_g)$ any nonsingular Bernoulli action, with $\mu_g(0) \in (0,1)$ for all $g \in G$. Assume that $G$ is not locally finite and denote by $\cL \subset [0,1]$ the set of limit values of $\mu_g(0)$ for $g \recht \infty$. Assume that $\mu$ is nonatomic and that $G \actson (X,\mu)$ is not dissipative.

Then, $G \actson (X,\mu)$ is either of stable type II$_1$ or of stable type III, but never of type II$_\infty$. More precisely, exactly one of the following statements holds.
\begin{enumlist}
\item $\cL = \{\lambda\}$ with $\lambda \in (0,1)$ and $\sum_{g \in G} (\mu_g(0) - \lambda)^2 < +\infty$. Then $\mu \sim \nu^G$ where $\nu(0) = \lambda$ and $G \actson (X,\mu)$ is of stable type II$_1$.
\item $\cL = \{\lambda\}$ with $\lambda \in (0,1)$ and $\sum_{g \in G} (\mu_g(0) - \lambda)^2 = +\infty$. Then $G \actson (X,\mu)$ is of stable type III$_1$.
\item $\cL$ contains at least two points. Then $\cL \subset [0,1]$ is a perfect set and $G \actson (X,\mu)$ is of stable type III$_1$.
\item $\cL = \{0\}$ or $\cL = \{1\}$. Then $G \actson (X,\mu)$ is of stable type III.
\end{enumlist}
\end{theorem}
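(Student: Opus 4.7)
The plan is to use Lemma \ref{lem.automatic-invariance-permutation-group}. Since $G$ is abelian, it is amenable and the left Bernoulli shift by $g$ coincides with the right Bernoulli shift by $g^{-1}$, so the hypotheses of the lemma are met. Given any ergodic pmp action $G \actson (Y,\eta)$, the lemma yields that every $G$-invariant function in $L^\infty(X \times \R \times Y)$, with $G$ acting via the Maharam extension on $X \times \R$ and diagonally on $Y$, is automatically invariant under the Maharam extension of $\cS_G \actson (X,\mu)$ (with $\cS_G$ acting trivially on $Y$). Consequently, determining the ergodicity and (stable) type of $G \actson (X,\mu)$ reduces to the analogous questions for $\cS_G \actson (X,\mu)$, which are settled by \cite{AP77,SV77} and Proposition \ref{prop.III-1-permutation-action}. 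In particular, the associated flow of $G \actson X \times Y$ is a quotient of that built from $\cS_G \actson X \times \R$, so the $G$-type is at least as degenerate as the $\cS_G$-type.

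\textbf{Cases (1), (2), (4).} In case (1), Kakutani's criterion \cite{Kak48} gives $\mu \sim \nu^G$ with $\nu(0) = \lambda$, since $\mu_g(0) \to \lambda \in (0,1)$ makes the square-root differences comparable to $\mu_g(0)-\lambda$. The classical Bernoulli action is measure-preserving, nonatomic and ergodic, hence stably of type II$_1$. In case (2), Proposition \ref{prop.III-1-permutation-action}(1) yields that $\cS_G \actson (X,\mu)$ is of type III$_1$, i.e.\ its Maharam extension is ergodic; combined with the setup and the ergodicity of $G \actson (Y,\eta)$, this gives stable type III$_1$ for $G$. In case (4), one rules out type II for $\cS_G$ via the SV77 classification: a valid partition $G = I_0 \sqcup I_1$ with $I_1$ infinite and $\lambda \in (0,1)$ would require $\sum_{I_1}(\mu_g(0)-\lambda)^2 < \infty$, impossible when $\mu_g(0) \to 0$ or $1$ since then $(\mu_g(0)-\lambda)^2 \to \lambda^2 > 0$. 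Hence $\cS_G$ is of type III, and the quotient argument plus nonatomicity excludes types I and II for $G$, giving stable type III.

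\textbf{Case (3) and the main obstacle.} Here two claims are needed: $\cL$ is perfect, and the action is stably of type III$_1$. The second follows from the first: if $\cL$ is perfect and meets $(0,1)$, one can pick sequences $\lambda_n, \lambda'_n \in \cL \cap (0,1)$ with $\lambda_n \neq \lambda'_n$ and $\lambda_n/\lambda'_n \to 1$, so Proposition \ref{prop.III-1-permutation-action}(2) places ratios $\lambda_n(1-\lambda'_n)/((1-\lambda_n)\lambda'_n)$ arbitrarily close to (but different from) $1$ into $r(\cS_G)$, forcing $r(\cS_G) = [0,+\infty)$ and hence stable type III$_1$ for $G$ as in case (2). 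The core of the proof is therefore the perfectness of $\cL$, which is the main obstacle. My plan is to pick an element $a \in G$ of infinite order (available because $G$ is not locally finite) and exploit the Kakutani bound $\sum_h (\sqrt{\mu_{ah}(0)} - \sqrt{\mu_h(0)})^2 < \infty$: if $\lambda \in \cL \cap (0,1)$ were isolated, then the level set $A = \{g \in G : |\mu_g(0)-\lambda| < \eps/2\}$ would be infinite with infinite complement and satisfy $|A \triangle aA| < \infty$ for every infinite-order $a$, hence would be concentrated on a proper ``end-like'' part of $G$; combined with non-dissipativeness (which forces enough symmetry among the $\mu_g(0)$) and the one- or two-ended structure of infinite abelian non-locally-finite groups, this is shown to be incompatible with having a second point in $\cL$. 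The delicate sub-case is when the isolated point lies in $\{0,1\}$, where the almost-invariance argument becomes weaker and the full non-dissipativeness hypothesis is essential to rule out concentration of $\mu_g(0)$ near an endpoint along one end of $G$.
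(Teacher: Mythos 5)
Your overall strategy coincides with the paper's: reduce everything to the permutation action via Lemma \ref{lem.automatic-invariance-permutation-group}, settle cases (1), (2) and (4) by the classification of permutation actions from \cite{AP77,SV77} together with Proposition \ref{prop.III-1-permutation-action}, and reduce case (3) to the assertion that $\cL$ is perfect. Those parts are correct. The genuine gap is exactly where you yourself locate the main obstacle: the perfectness of $\cL$ is asserted but not proved. Your sketch stops at ``the level set $A$ is a nontrivial almost invariant subset, hence concentrated on an end-like part of $G$, and combined with non-dissipativeness this is shown to be incompatible with a second point in $\cL$'' --- but the implication from non-dissipativeness is the entire content of the step, and nothing in your text derives it. Two concrete ingredients are missing. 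First, from the existence of a nontrivial almost invariant subset you must conclude that $G$ is virtually cyclic; this requires the version of Stallings' theorem for not necessarily finitely generated groups (\cite[Theorem IV.6.10]{DD89}), since an abelian, non locally finite group with more than one end is virtually $\Z$. Second --- and this is the real work --- you must show that the resulting profile ($\mu_g(0)$ within $\eps$ of $\lambda$ on one half-line of a finite index copy of $\Z$, and at distance at least $\eps$ from $\lambda$ on the other) forces dissipativity. The paper does this via the Hellinger-type estimate
$$\int_X \sqrt{\frac{d\mu(a \cdot x)}{d\mu(x)}}\; d\mu(x) \;\leq\; \prod_{b \in \Z} \bigl(\sqrt{\mu_{a+b}(0)\,\mu_b(0)} + \sqrt{\mu_{a+b}(1)\,\mu_b(1)}\bigr) \;\leq\; \exp(\delta n_0)\,\exp(\delta a/2) \; ,$$
which is summable over $a \recht -\infty$ and yields dissipativity by \cite[Proposition 1.3.2]{Aar97}, contradicting the hypothesis. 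Your appeal to ``non-dissipativeness forces enough symmetry among the $\mu_g(0)$'' does not substitute for this computation.

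A second, smaller issue: you only treat isolated points $\lambda \in \cL \cap (0,1)$ and defer $\lambda \in \{0,1\}$ as a ``delicate sub-case'' without an argument; note that perfectness requires ruling out isolated points of $\cL$ at $0$ and $1$ as well (e.g.\ one must exclude $\cL = \{0\} \cup [1/2,3/4]$). In fact no separate treatment is needed: the paper's argument is uniform in $\lambda \in [0,1]$, since the level set $W = \{g \in G : |\mu_g(0)-\lambda| \leq \eps\}$ is almost invariant for any isolated $\lambda \in \cL$, and the separation $(\sqrt{\mu_a(0)}-\sqrt{\mu_b(0)})^2 \geq \delta$ across the two half-lines holds just as well when $\lambda$ is $0$ or $1$. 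As written, however, your case (3) is incomplete at its decisive step.
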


In particular, if $G$ is a non locally finite abelian group, only singletons and perfect sets arise as the set of limit points of $\mu_g(0)$ for a nonsingular conservative Bernoulli action. In the locally finite case, the situation is different: every closed subset of $[0,1]$ may arise as the set of limit points and there are Bernoulli actions of type II$_\infty$. We prove this in Propositions \ref{prop.conservative-locally-finite} and \ref{prop.locally-finite-type-II-infty} below.

\begin{proof}
We first deduce from the nonsingularity of $G \actson (X,\mu)$ that the following holds: if $\lambda \in \cL$ is an isolated point, then $\cL = \{\lambda\}$.

Take $\eps > 0$ such that $\cL \cap [\lambda-\eps,\lambda+\eps] = \{\lambda\}$. Define the infinite subset $W \subset G$ given by $W = \{g \in G \mid |\mu_g(0) - \lambda| \leq \eps\}$. By Kakutani's criterion \eqref{eq.criterion-nonsingular}, we have for every $g \in G$ that $\lim_{h \recht \infty} |\mu_{gh}(0) - \mu_h(0)| = 0$. It follows that $|g W \vartriangle W| < +\infty$ for every $g \in G$, meaning that $W \subset G$ is an almost invariant subset. If $G \setminus W$ is finite, we conclude that $\lim_{g \recht \infty} \mu_g(0) = \lambda$. If $G \setminus W$ is infinite, then $W \subset G$ is a nontrivial almost invariant set, meaning that the group $G$ has more than one end. By the version of Stallings' theorem for possibly infinitely generated groups (see \cite[Theorem IV.6.10]{DD89} and Remark \ref{rem.ends-groups} below), since $G$ is abelian and not locally finite, this implies that $G$ is virtually cyclic.

Choose a finite index copy of $\Z$ inside $G$. Then, $W \cap \Z$ is a nontrivial almost invariant subset of $\Z$. So, $W \cap \Z$ contains either $[n_0,+\infty)$ or $(-\infty,-n_0]$ for large enough $n_0 \in \N$. We assume that $[n_0,+\infty) \subset W$ and the other case is handled analogously. Taking $n_0$ large enough, we find a $\delta > 0$ such that
$$(\sqrt{\mu_a(0)} - \sqrt{\mu_b(0)})^2 \geq \delta \quad\text{whenever $a \leq -n_0$ and $b \geq n_0$.}$$
Essentially repeating the computation in \cite[Proposition 4.1]{VW17}, we have for every $a \leq -2n_0$,
\begin{align*}
\int_X \sqrt{\frac{d\mu(a \cdot x)}{d\mu(x)}} \, d\mu(x) & \leq \prod_{b \in \Z} \bigl( \sqrt{\mu_{a+b}(0) \, \mu_{b}(0)} + \sqrt{\mu_{a+b}(1) \, \mu_{b}(1)} \bigr) \\
& \leq \exp\Bigl(- \frac{1}{2} \sum_{b \in \Z} \bigl( (\sqrt{\mu_{a+b}(0)} - \sqrt{\mu_b(0)})^2 + (\sqrt{\mu_{a+b}(1)} - \sqrt{\mu_b(1)})^2 \bigr) \Bigr)\\
& \leq \exp\Bigl(- \frac{1}{2} \sum_{b =n_0+1}^{-a-n_0+1} (\sqrt{\mu_{a+b}(0)} - \sqrt{\mu_b(0)})^2 \Bigr) \\
& \leq \exp(\delta n_0) \, \exp(\delta a / 2) \; .
\end{align*}
It follows that
$$\int_X \sum_{a=-\infty}^{-2n_0} \sqrt{\frac{d\mu(a \cdot x)}{d\mu(x)}} \, d\mu(x) < +\infty \; .$$
By \cite[Lemma 2.2]{Kos12} and \cite[Proposition 1.3.2]{Aar97}, the action $\Z \actson (X,\mu)$ is dissipative. Since $\Z$ has finite index in $G$, also $G \actson (X,\mu)$ is dissipative, contrary to our assumptions.

So we have proven that $\cL$ is either a singleton or a perfect set. Choose any ergodic pmp action $G \actson (Y,\eta)$ and consider the diagonal action $G \actson X \times \R \times Y$ with the Maharam extension of $G \actson (X,\mu)$. Let $F \in L^\infty(X \times \R \times Y)$ be a $G$-invariant function that generates the fixed point algebra $L^\infty(X \times \R \times Y)^G$. By Lemma \ref{lem.automatic-invariance-permutation-group}, $F$ is invariant under the Maharam extension of $\cS_G \actson (X,\mu)$.

If $\cL$ is a perfect set, it follows from the second point of Proposition \ref{prop.III-1-permutation-action} that $\cS_G \actson (X,\mu)$ is of type III$_1$. Therefore, $F$ only depends on the $Y$-variable. Since $G \actson (Y,\eta)$ is ergodic, it follows that $F$ is essentially constant. So, $G \actson (X,\mu)$ is of stable type III$_1$.

When $\cL = \{\lambda\}$ with $\lambda \in (0,1)$, we get that $\lim_{g \recht \infty} \mu_g(0) = \lambda$. If $\sum_{g \in G} (\mu_g(0) - \lambda)^2 < +\infty$, we have that $\mu \sim \nu^G$ where $\nu(0) = \lambda$. In that case, $G \actson (X,\mu)$ is of stable type II$_1$. If $\sum_{g \in G} (\mu_g(0) - \lambda)^2 = +\infty$, the first point of Proposition \ref{prop.III-1-permutation-action} says that $\cS_G \actson (X,\mu)$ is of type III$_1$ and we again conclude that $G \actson (X,\mu)$ is of stable type III$_1$.

It remains to consider the cases $\cL = \{0\}$ and $\cL = \{1\}$. By symmetry, we may assume that $\lim_{g \recht \infty} \mu_g(0) = 0$. By \cite[Theorem 1.2]{SV77}, the action $\cS_G \actson (X,\mu)$ is of type III (see the discussion in Section \ref{sec.permutation-action}). Denote by $\R \actson^\gamma (Z,\zeta)$ the flow associated to $\cS_G \actson (X,\mu)$. Since $F$ is invariant under the Maharam extension of $\cS_G \actson (X,\mu)$, it follows that the flow associated to $G \actson (X,\mu)$ is a factor of the flow $\R \actson Z \times Y$ given by $t \cdot (z,y) = (\gamma_t(z),y)$. So, the flow associated to $G \actson (X,\mu)$ cannot be the translation action of $\R$ on $\R$ and we conclude that $G \actson (X,\mu)$ is of stable type III.
\end{proof}

In the locally finite case, the situation is quite different.

\begin{theorem}\label{thm.type-abelian-locally-finite}
Let $G \actson (X,\mu) = \prod_{g \in G} (\{0,1\},\mu_g)$ be any nonsingular Bernoulli action of any abelian infinite locally finite group $G$. Assume that $\mu$ is nonatomic and that $G \actson (X,\mu)$ is not dissipative.

\begin{enumlist}
\item The action $G \actson (X,\mu)$ is of type II$_1$ if and only if there exists a $\lambda \in (0,1)$ such that $\sum_{g \in G} (\mu_g(0) - \lambda)^2 < +\infty$.
\item The action $G \actson (X,\mu)$ is of type II$_\infty$ if and only if there exists an infinite almost invariant subset $W \subset G$ with infinite complement $V = G \setminus W$ and a $\lambda \in (0,1)$ such that
    \begin{equation}\label{eq.condition-II-infty}
    \sum_{g \in W} (\mu_g(0) - \lambda)^2 + \sum_{g \in V} \min\{\mu_g(0),\mu_g(1)\} < +\infty \; .
    \end{equation}
\item In all other cases, $G \actson (X,\mu)$ is of type III.
\end{enumlist}
\end{theorem}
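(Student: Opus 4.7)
The plan is to reduce everything to the permutation action $\cS_G \actson (X,\mu)$ via Lemma \ref{lem.automatic-invariance-permutation-group}, and then invoke the Schmidt--Vershik classification recalled in Section \ref{sec.permutation-action}. Since $G$ is amenable (being locally finite) and abelian, all hypotheses of Lemma \ref{lem.automatic-invariance-permutation-group} and Theorem \ref{thm.dichotomy-dissipative-weakly-mixing} are met: $G \actson (X,\mu)$ is weakly mixing, hence ergodic, and by \cite[Theorem 1.1]{AP77} the permutation action $\cS_G \actson (X,\mu)$ is also ergodic because $\mu$ is nonatomic.

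The key step is to prove that a $\sigma$-finite measure $\mu_1 \sim \mu$ is $G$-invariant if and only if it is $\cS_G$-invariant. If $\mu_1$ is $G$-invariant, set $f = d\mu_1/d\mu$, so that $\al(g,x) = \log f(x) - \log f(g \cdot x)$. For every $s \in \R$ the bounded function $F_s(x,t) = \exp(is(t + \log f(x)))$ is invariant under the Maharam extension of $G$, so Lemma \ref{lem.automatic-invariance-permutation-group} (with trivial $(Y,\eta)$) gives that $F_s$ is invariant under the Maharam extension of $\cS_G$; varying $s$ forces $\be(\si,x) = \log f(x) - \log f(\si \cdot x)$, i.e.\ $\mu_1$ is $\cS_G$-invariant. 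Conversely, assume $\mu_1$ is $\cS_G$-invariant. Ergodicity of $\cS_G \actson (X,\mu)$ implies that such a $\mu_1$ is unique up to a positive scalar in the class of $\mu$. For every $g_0 \in G$, conjugation gives $g_0^{-1} \cS_G g_0 = \cS_G$, so the pushforward $g_0 \mu_1$ is again $\cS_G$-invariant and equivalent to $\mu$; by uniqueness $g_0 \mu_1 = c(g_0) \mu_1$ for some $c(g_0) \in \R^*_+$. The map $c : G \recht \R^*_+$ is a group homomorphism, and since every element of $G$ has finite order while $\R^*_+$ is torsion-free, we obtain $c \equiv 1$, so $\mu_1$ is $G$-invariant.

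With this equivalence in hand, the Schmidt--Vershik criterion from \cite[Theorem 1.2]{SV77} directly yields the three parts. That theorem asserts that $\cS_G \actson (X,\mu)$ preserves a $\sigma$-finite measure in the class of $\mu$ precisely when there exist $\lambda \in (0,1)$ and a decomposition $G = I_0 \sqcup I_1$ with $\sum_{I_0} \min\{\mu_g(0),\mu_g(1)\} + \sum_{I_1}(\mu_g(0) - \lambda)^2 < +\infty$; the action is of type II$_1$ when $I_0$ is finite and of type II$_\infty$ when $I_0$ is infinite. Part (1) corresponds to $I_0$ finite, in which case $\sum_g (\mu_g(0) - \lambda)^2 < +\infty$ and Kakutani's criterion \eqref{eq.criterion-nonsingular} gives $\mu \sim \nu^G$ with $\nu(0) = \lambda$, manifestly $G$-invariant. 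Part (2) corresponds to $I_0$ infinite, with $W = I_1$ and $V = I_0$ satisfying the stated summability. Part (3) covers the case where no such decomposition exists: then $\cS_G$ has no $\sigma$-finite invariant measure in $[\mu]$, and by the equivalence $G$ is of type III.

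It remains to verify that in Part (2) the set $W = I_1$ produced by Schmidt--Vershik is automatically almost invariant, so that the statement matches what the proof naturally produces. If $h \in W \cap g_0 V$ then $\mu_h(0)$ is close to $\lambda \in (0,1)$ while $\mu_{g_0^{-1}h}(0)$ is close to $0$ or $1$, so $(\sqrt{\mu_{g_0^{-1}h}(0)} - \sqrt{\mu_h(0)})^2$ is bounded below by a positive constant depending only on $\lambda$. Kakutani's nonsingularity criterion \eqref{eq.criterion-nonsingular} then forces $W \cap g_0 V$ to be finite for every $g_0$, giving $|g_0 W \vartriangle W| < +\infty$. The delicate part of the argument is the converse direction in the key step, where $\cS_G$-invariance of $\mu_1$ is upgraded to $G$-invariance. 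This is exactly where local finiteness is used, and precisely this step that fails in the abelian non-locally-finite setting of Theorem \ref{thm.type-abelian}, where nontrivial homomorphisms $G \recht \R^*_+$ can exist, explaining why type II$_\infty$ is ruled out there but not here.
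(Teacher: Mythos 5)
Your proposal is correct, and the overall frame (reduce to $\cS_G$ via Lemma \ref{lem.automatic-invariance-permutation-group}, then quote \cite[Theorem 1.2]{SV77}) matches the paper. But the heart of your argument --- the two-way transfer of $\sigma$-finite invariant measures between $G$ and $\cS_G$ --- is genuinely different from what the paper does, and in my view cleaner. The paper handles the type III cases by observing that the associated flow of $G \actson (X,\mu)$ is an $\R$-factor of that of $\cS_G \actson (X,\mu)$, and it handles the type II$_\infty$ case by an explicit construction: it partitions $V = A \sqcup B$, exhausts $X$ by sets $X_n$ on which the coordinates in $V_n \cap A$, $V_n \cap B$ are frozen, puts the product measure $\nu^{G\setminus V_n}$ on $X_n$, and verifies invariance of these measures under $\cR(\Lambda \actson X)|_{X_n}$ for each finite subgroup $\Lambda < G$, concluding II$_\infty$ from $\nu_{n+m}(X_n) = \lambda^m$. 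You instead get the invariant measure for free from the $\cS_G$-side and upgrade its $\cS_G$-invariance to $G$-invariance abstractly: uniqueness up to scalar (ergodicity of $\cS_G$), normalization of $\cS_G$ by the Bernoulli action, and the observation that the resulting modular homomorphism $c : G \recht \R^*_+$ must vanish because $G$ is torsion and $\R^*_+$ is torsion-free. Both arguments use local finiteness in an essential way (the paper through the finite subgroups $\Lambda$ and the almost invariance of $A$ and $B$; you through torsion), and your forward direction via the test functions $\exp(is(t+\log f(x)))$ and the Choquet-style separation of $s$ is a correct application of Lemma \ref{lem.automatic-invariance-permutation-group}. What your route buys is a single statement ($\mu_1$ is $G$-invariant iff $\cS_G$-invariant) from which all three parts, including the exact match of II$_1$ versus II$_\infty$, drop out of \cite{SV77} at once; what the paper's route buys is an explicit description of the invariant measure, which it then reuses almost verbatim in the construction of Proposition \ref{prop.locally-finite-type-II-infty}. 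Your verification that the set $I_1$ produced by \cite{SV77} is automatically almost invariant (via Kakutani) correctly closes the remaining gap between the Schmidt--Vershik decomposition and the statement of part 2.
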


In Proposition \ref{prop.locally-finite-type-II-infty} below, we show that each of the cases in Theorem \ref{thm.type-abelian-locally-finite} does occur, including type II$_\infty$, for any infinite locally finite group $G$.

\begin{proof}
By Lemma \ref{lem.automatic-invariance-permutation-group}, the flow associated to $G \actson (X,\mu)$ is an $\R$-factor of the flow associated to $\cS_G \actson (X,\mu)$. So whenever $\cS_G \actson (X,\mu)$ is of type III, also $G \actson (X,\mu)$ is of type III.

Denote by $\cL \subset [0,1]$ the set of limit values of $\mu_g(0)$ as $g \recht \infty$. If $\cL \subset \{0,1\}$, it follows from \cite[Theorem 1.2]{SV77} that $\cS_G \actson (X,\mu)$ is of type III (see the discussion in Section \ref{sec.permutation-action}). If $\cL \cap (0,1)$ contains at least two points, it follows from the second point of Proposition \ref{prop.III-1-permutation-action} that $\cS_G \actson (X,\mu)$ is of type III.

It remains to consider the case where $\cL \cap (0,1) = \{\lambda\}$. Take $\eps > 0$ such that $0 < \eps < \lambda < 1-\eps < 1$. Define $W = \{g \in G \mid \eps < \mu_g(0) < 1-\eps\}$. Write $V = G \setminus W$. Since $0$, $1$ and $\lambda$ are the only possible limit values of $\mu_g(0)$ as $g \recht \infty$, it follows from Kakutani's criterion \eqref{eq.criterion-nonsingular} that $W$ and $V$ are almost invariant subsets of $G$. If the left hand side of \eqref{eq.condition-II-infty} equals $+\infty$, it again follows from \cite[Theorem 1.2]{SV77} that $\cS_G \actson (X,\mu)$ is of type III.

So we may assume that \eqref{eq.condition-II-infty} holds. If $V$ is finite, it follows that also $\sum_{g \in G} (\mu_g(0) - \lambda)^2 < +\infty$. Defining the probability measure $\nu$ on $\{0,1\}$ with $\nu(0) = \lambda$, we find that $\mu \sim \nu^G$, so that $G \actson (X,\mu)$ is of type II$_1$.

Finally assume that \eqref{eq.condition-II-infty} holds with $V$ infinite. We prove that $G \actson (X,\mu)$ is of type II$_\infty$. By Theorem \ref{thm.dichotomy-dissipative-weakly-mixing}, the action $G \actson (X,\mu)$ is ergodic. Partition $V = A \sqcup B$ such that
\begin{equation}\label{eq.atom-A-B}
\sum_{g \in A} \mu_g(1) + \sum_{g \in B} \mu_g(0) < +\infty \; .
\end{equation}
Choose a decreasing subsequence $V_n \subset V$ such that $|V \setminus V_n| = n$ and such that $\bigcap_n V_n = \emptyset$, which is possible because $V$ is infinite. Then define the increasing sequence of subsets $X_n \subset X$ given by
$$X_n = \{x \in X \mid x_g = 0 \;\;\text{for all $g \in V_n \cap A$ and}\;\; x_g = 1 \;\;\text{for all $g \in V_n \cap B$}\} \; .$$
By \eqref{eq.atom-A-B}, the union $\bigcup_n X_n$ has a complement of measure zero. Denote by $\nu$ the probability measure on $\{0,1\}$ given by $\nu(0) = \lambda$. Identifying $X_n$ with $\{0,1\}^{G \setminus V_n}$, we define the probability measure $\nu_n = \nu^{G \setminus V_n}$ on $X_n$. By construction, the restriction of $\mu$ to $X_n$ is equivalent with $\nu_n$.

Denote by $\cR = \cR(G \actson X)$ the orbit equivalence relation of $G \actson (X,\mu)$. Since $G \actson (X,\mu)$ is ergodic and $\mu$ is nonatomic, the equivalence relation $\cR$ is ergodic and not of type I. So also the restriction $\cR|_{X_n}$ is ergodic and not of type I. We claim that the probability measure $\nu_n$ is invariant under $\cR|_{X_n}$. Once this claim is proven, note that $\nu_{n+m}(X_n) = \lambda^m$. Since $\bigcup_n X_n$ has a complement of measure zero, it then follows that $\cR$ is of type II$_\infty$.

To prove the claim, fix $n \in \N$ and fix a finite subgroup $\Lambda < G$. It suffices to prove that $\cR(\Lambda \actson X)|_{X_n}$ preserves the measure $\nu_n$.
Define $A_0 \subset A$, $B_0 \subset B$ and $Y \supset X_n$ given by
\begin{align*}
& A_0 = \bigcap_{g \in \Lambda} g (V_n \cap A) \quad , \quad B_0 = \bigcap_{g \in \Lambda} g (V_n \cap B) \quad\text{and}\\ & Y = \{x \in X \mid x_g = 0 \;\;\text{for all $g \in A_0$ and}\;\; x_g = 1 \;\;\text{for all $g \in B_0$}\} \; .
\end{align*}
Since $A$ and $B$ are almost invariant subsets of $G$ and $V \setminus V_n$ is finite, we get that $A \setminus A_0$ and $B \setminus B_0$ are finite sets. Write $U = G \setminus (A_0 \cup B_0)$ and identify $Y = \{0,1\}^{U}$. Define the probability measure $\zeta = \nu^{U}$ on $Y$. Since $A_0$, $B_0$ and $U$ are globally $\Lambda$-invariant, we get that $Y$ is $\Lambda$-invariant and that $\zeta$ is a $\Lambda$-invariant probability measure. The restriction of $\zeta$ to $X_n$ equals $\lambda^{|V_n \setminus (A_0 \cup B_0)|} \, \nu_n$. So, $\nu_n$ is invariant under $\cR(\Lambda \actson X)|_{X_n}$ and the claim is proven.
\end{proof}

In the following result, we prove that for many Bernoulli actions of locally finite groups, the type is the same as the type for the associated permutation actions. We then use this in Proposition \ref{prop.locally-finite-type-II-infty} below to prove that all possible types may occur.

\begin{proposition}\label{prop.conservative-locally-finite}
Let $G$ be an infinite locally finite group and $(\lambda_n)_{n \in \N}$ any sequence in $(0,1)$.
\begin{enumlist}
\item There exists a strictly increasing sequence of subgroups $G_n \subset G$ with the following properties: $\bigcup_n G_n = G$ and, writing $\mu_g(0) = \lambda_n$ for all $g \in G_n \setminus G_{n-1}$, both the left and the right Bernoulli action of $G$ on $(X,\mu) = \prod_{g \in G} (\{0,1\},\mu_g)$ are nonsingular and conservative.
\item Whenever $G_n \subset G$ is a strictly increasing sequence of subgroups with $\bigcup_n G_n = G$ satisfying the conclusions of point 1, the Maharam extension $G \actson X \times \R$ of the Bernoulli action and the Maharam extension $\cS_G \actson X \times \R$ of the permutation action have the same fixed point algebra: $L^\infty(X \times \R)^G = L^\infty(X \times \R)^{\cS_G}$.
\end{enumlist}
\end{proposition}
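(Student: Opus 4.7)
I would take $G_0 = \{e\}$, and given $G_{n-1}$ pick any finite subgroup $G_n \supsetneq G_{n-1}$ of $G$ with
\[
|G_n \setminus G_{n-1}|\min\{\lambda_n, 1-\lambda_n\}^{|G_{n-1}|} \geq n \quad\text{and}\quad |G_n \setminus G_{n-1}|\min\{\lambda_n, 1-\lambda_n\} \geq n;
\]
this is possible because $G$ is infinite and locally finite, and the second inequality ensures $\mu$ is nonatomic. The key structural observation is that if $g \in G_N$ and $h \in G_m \setminus G_{m-1}$ with $m > N$, then $g \in G_{m-1}$ permutes both $G_m$ and $G_{m-1}$, so $g^{-1}h \in G_m \setminus G_{m-1}$ and $\mu_{g^{-1}h} = \mu_h = \nu_m$. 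Thus the Kakutani sum \eqref{eq.criterion-nonsingular} for the left shift (and symmetrically for the right shift) reduces to a finite sum over $h \in G_N$, yielding nonsingularity; moreover $d\mu(gx)/d\mu(x)$ depends only on $x|_{G_N}$. For conservativeness, consider a cylinder $\cU = \{x : x|_{G_N} \in A\}$ with $\mu(\cU) > 0$ and $g \in G_M \setminus G_{M-1}$, $M > N$. Since $gG_N \cap G_N = \emptyset$, the sets $\cU$ and $g\cU$ involve disjoint coordinates, so $\mu(\cU \cap g\cU) = \mu(\cU)\mu(g\cU)$; and since $h \in G_N \subset G_{M-1}$ forces $gh \in G_M \setminus G_{M-1}$ (else $g \in G_{M-1}$), we get $\mu(g\cU) = \nu_M^{G_N}(A) \geq \min\{\lambda_M, 1-\lambda_M\}^{|G_N|}$. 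Summing,
\[
\sum_{g \in G} \mu(\cU \cap g\cU) \geq \mu(\cU)\sum_{M > N}(|G_M| - |G_{M-1}|)\min\{\lambda_M, 1-\lambda_M\}^{|G_N|} = +\infty
\]
by the growth condition, so every positive-measure cylinder is infinitely recurrent. Extending from cylinders to all positive-measure sets is the delicate step (since $\lambda_n$ may approach $0$ or $1$, the Radon--Nikodym derivatives are not uniformly bounded and one cannot invoke \cite[Proposition 4.1]{VW17} directly); it is done via the Hopf decomposition, by ruling out that the $G$-invariant dissipative part $X_d$ has positive measure, using that a wandering subset $W \subset X_d$ satisfies $\sum_g \mu(gW \cap W) \leq \mu(W) < +\infty$, which is incompatible with the divergent cylinder sum after approximating a piece of $X_d$ by a cylinder and controlling errors via the bounded Radon--Nikodym derivatives inside a fixed $G_M$.

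\textbf{Part (2).} Since $G$ is amenable (being locally finite) and by Part~(1) both shifts are nonsingular, $G \actson (X,\mu)$ is conservative and $\mu$ is nonatomic, Lemma \ref{lem.automatic-invariance-permutation-group} applied with $(Y,\eta)$ a one-point space gives the inclusion $L^\infty(X \times \R)^G \subset L^\infty(X \times \R)^{\cS_G}$. For the reverse inclusion, fix $g \in G$ and let $N = \min\{n : g \in G_n\}$. The $g$-shift decomposes as the finite permutation $h \mapsto g^{-1}h$ of $G_N$ together with finite permutations $h \mapsto g^{-1}h$ of each annulus $G_m \setminus G_{m-1}$ for $m > N$ (these being $g$-invariant as noted above), all acting on pairwise disjoint coordinate sets and each an element of $\cS_G$. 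Let $\tau_K \in \cS_G$ be the composition of these permutations up to level $K$; then $\tau_K x \to g x$ in the product topology as $K \to \infty$, and since the permutations at levels $m > N$ preserve $\mu$ (uniform marginal $\nu_m$ on each annulus), one has $\beta(\tau_K, x) = \alpha(g, x)$ for every $K \geq N$. For any $F \in L^\infty(X \times \R)^{\cS_G}$ we therefore have $F(\tau_K x, t + \alpha(g,x)) = F(x, t)$ for every $K \geq N$; passing to the limit via $L^1$-approximation of $F$ by cylinder functions in finitely many coordinates (which coincide under $\tau_K$ and $g$ once $K$ is large enough to capture their supports) yields $F(gx, t + \alpha(g, x)) = F(x, t)$, so $F \in L^\infty(X \times \R)^G$.
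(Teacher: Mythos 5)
Your Part (2) and the nonsingularity half of Part (1) are essentially the paper's own argument: the observation that $\mu_{gk}=\mu_k$ (and $\mu_{kg}=\mu_k$) for $g\in G_n$, $k\notin G_n$ gives nonsingularity of both shifts for \emph{any} choice of the $G_n$, and your decomposition of the left shift by $g\in G_N$ into a finite permutation of $G_N$ together with measure-preserving permutations of the annuli $G_m\setminus G_{m-1}$, $m>N$, followed by an $L^1$-approximation by cylinder functions, is exactly the paper's factorization $g_0\cdot x=\si_n\cdot(\zeta_n(x))$ with $\al(g_0,x)=\be(\si_n,\zeta_n(x))$. That part is correct.

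The conservativeness step in Part (1) has a genuine gap. You establish $\sum_{g\in G}\mu(\cU\cap g\cdot\cU)=+\infty$ for every positive-measure cylinder $\cU$, but for a \emph{nonsingular} action this does not imply conservativeness, even on a generating algebra of cylinders. A concrete counterexample to the implication: take $G=\Z$ with $\mu_n(0)=1/2$ for $n\le 0$ and $\mu_n(0)=n^{-2}$ for $n\ge 1$. This Bernoulli action is nonsingular and dissipative (the Hellinger integrals $\int_X\sqrt{d\mu(k\cdot x)/d\mu(x)}\,d\mu(x)$ decay geometrically and are summable over $k$, as in the dissipativity argument in the proof of Theorem \ref{thm.type-abelian}), yet for the cylinder $\cU=\{x\mid x_0=1\}$ one has $\mu(\cU\cap T^k\cU)=\mu_0(1)\,\mu_k(1)\ge 1/4$ for all $k\ge 2$, so $\sum_k\mu(\cU\cap T^k\cU)=+\infty$. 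Your proposed repair---approximating a piece of the dissipative part by a cylinder---cannot be carried out: the error one must control is $\sum_{g\in G}\mu(g\cdot(\cU\vartriangle W))=\int_{\cU\vartriangle W}\sum_{g\in G}\frac{d\mu(g\cdot x)}{d\mu(x)}\,d\mu(x)$, and since the Radon--Nikodym derivatives are not uniformly bounded over $g\in G$ (the $\lambda_n$ may approach $0$ or $1$), this sum is not small when $\mu(\cU\vartriangle W)$ is small; restricting attention to ``a fixed $G_M$'' defeats the purpose, since the divergence you need lives in the tail $g\to\infty$.

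The paper avoids first moments of correlations entirely and works directly with the Hopf criterion: by convexity of $t\mapsto t^{-1}$,
\begin{equation*}
\int_X\Bigl(\sum_{g\in G}\frac{d\mu(g\cdot x)}{d\mu(x)}\Bigr)^{-1}d\mu(x)\;\le\;\frac{1}{|\cF|^2}\sum_{g\in\cF}\int_X\frac{d\mu(x)}{d\mu(g\cdot x)}\,d\mu(x)
\end{equation*}
for every finite $\cF\subset G$, and the integrals $\int_X\frac{d\mu(x)}{d\mu(g\cdot x)}\,d\mu(x)$ are computed explicitly as a product over all previous annuli. Choosing $G_n$ inductively so that the right-hand side over $\cF=G_n\setminus G_{n-1}$ is less than $1/n$ forces $\sum_g\frac{d\mu(g\cdot x)}{d\mu(x)}=+\infty$ a.e., which is conservativeness. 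Note that the resulting growth condition on $|G_n\setminus G_{n-1}|$ is substantially stronger than yours: it must dominate not only $\min\{\lambda_n,1-\lambda_n\}^{-|G_{n-1}|}$ but also the accumulated factor $\prod_{m<n}\bigl(\frac{\lambda_n^2}{\lambda_m}+\frac{(1-\lambda_n)^2}{1-\lambda_m}\bigr)^{|G_m\setminus G_{m-1}|}$ coming from the earlier annuli, so it is not clear that your weaker condition even yields a conservative action.
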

\begin{proof}
1.\ When $g \in G_n$ and $k \in G \setminus G_n$, we have $\mu_{gk} = \mu_k$. So for any choice of $G_n$, the action $G \actson (X,\mu)$ is nonsingular. Although the value of $\mu_e(0)$ is irrelevant, to avoid confusion, we assume that $G_0 = \emptyset$ and that $G_n$ is a finite subgroup for every $n \geq 1$. We define $\mu_g(0) = \lambda_n$ for all $g \in G_n \setminus G_{n-1}$ and all $n \geq 1$.

For any finite subset $\cF \subset G$, using the convexity of $a \mapsto a^{-1}$, we have
\begin{multline}\label{eq.a-conservative-estimate}
\int_X \Bigl(\sum_{g \in G} \frac{d\mu(g \cdot x)}{d\mu(x)} \Bigr)^{-1} \, d\mu(x) \leq \int_X \Bigl(\sum_{g \in \cF} \frac{d\mu(g \cdot x)}{d\mu(x)} \Bigr)^{-1} \, d\mu(x) \\ \leq \frac{1}{|\cF|^2} \sum_{g \in \cF} \int_X \frac{d\mu(x)}{d\mu(g \cdot x)} \, d\mu(x) \; .
\end{multline}
Given any choice of $G_n$, we have for every $g \in G_n \setminus G_{n-1}$ that
\begin{align*}
\int_X \frac{d\mu(x)}{d\mu(g \cdot x)} \, d\mu(x) & = \prod_{k \in G_n} \Bigl( \frac{\mu_k(0)^2}{\mu_{gk}(0)} + \frac{\mu_k(1)^2}{\mu_{gk}(1)} \Bigr) \\
& = \prod_{m=1}^{n-1} \Bigl( \prod_{\substack{k \in (G_m \setminus G_{m-1}) \\ \cup g^{-1}(G_m \setminus G_{m-1})}} \Bigl( \frac{\mu_k(0)^2}{\mu_{gk}(0)} + \frac{\mu_k(1)^2}{\mu_{gk}(1)} \Bigr) \Bigr) \\
& = \prod_{m=1}^{n-1} \Bigl( \Bigl( \frac{\lambda_m^2}{\lambda_n} + \frac{(1-\lambda_m)^2}{1-\lambda_n} \Bigr)^{|G_m \setminus G_{m-1}|} \; \cdot \;
\Bigl( \frac{\lambda_n^2}{\lambda_m} + \frac{(1-\lambda_n)^2}{1-\lambda_m} \Bigr)^{|G_m \setminus G_{m-1}|}\Bigr) \; .
\end{align*}

Since this last expression only depends on the sequence $\lambda_n$ and the cardinalities $|G_m \setminus G_{m-1}|$ with $m \leq n-1$, we can inductively choose $G_n$ large enough such that
$$\frac{1}{|G_n \setminus G_{n-1}|^2} \sum_{g \in G_n \setminus G_{n-1}} \int_X \frac{d\mu(x)}{d\mu(g \cdot x)} \, d\mu(x) < \frac{1}{n} \; .$$
It then follows from \eqref{eq.a-conservative-estimate} that
$$\int_X \Bigl(\sum_{g \in G} \frac{d\mu(g \cdot x)}{d\mu(x)} \Bigr)^{-1} \, d\mu(x) = 0$$
so that $G \actson (X,\mu)$ is conservative.

2.\ When $g \in G_n$ and $k \in G \setminus G_n$, we have $\mu_{kg} = \mu_k$, so that also the right Bernoulli action on $(X,\mu)$ is nonsingular. Thus by Lemma \ref{lem.automatic-invariance-permutation-group}, we have $L^\infty(X \times \R)^G \subset L^\infty(X \times \R)^{\cS_G}$. To prove the converse, assume that $F \in L^\infty(X \times \R)^{\cS_G}$. Fix $n_0 \in \N$ and $g_0 \in G_{n_0}$. For every $n \geq n_0$, define the finite permutation
$$\si_n : G \recht G : \si_n(k) = g_0 k \;\;\text{if $k \in G_n$, and}\;\; \si_n(k) = k \;\;\text{if $k \not\in G_n$.}$$
For $n \geq n_0$, also define the measure preserving transformation $\zeta_n : X \recht X$ given by
$$(\zeta_n(x))_k = x_k \;\;\text{if $k \in G_n$, and}\;\; (\zeta_n(x))_k = x_{g_0^{-1}k} \;\;\text{if $k \not\in G_n$.}$$
By construction, $g_0 \cdot x = \si_n \cdot (\zeta_n(x))$. Also,
$$\al(g_0,x) = \log\Bigl(\prod_{k \in G_n} \frac{\mu_{g_0 k}(x_k)}{\mu_k(x_k)} \Bigr) = \beta(\si_n,\zeta_n(x)) \; .$$
Since $F$ is $\cS_G$-invariant, we find that
$$F(g_0 \cdot x, t + \al(g_0,x)) = F(\si_n \cdot (\zeta_n(x)) , t+ \beta(\si_n,\zeta_n(x))) = F(\zeta_n(x),t)$$
for all $n \geq n_0$ and a.e.\ $(x,t) \in X \times \R$. When $n \recht +\infty$, we have $F(\zeta_n(x),t) \recht F(x,t)$ weakly$^*$. So we conclude that $F$ is $g_0$-invariant. Since this holds for all $g_0 \in G_{n_0}$ and all $n_0 \in \N$, the proposition is proven.
\end{proof}

\begin{proposition}\label{prop.locally-finite-type-II-infty}
Let $G$ be an infinite, locally finite group. Then $G$ admits weakly mixing nonsingular Bernoulli actions of any possible type: II$_1$, II$_\infty$ and III$_\lambda$, for any $\lambda \in [0,1]$.
\end{proposition}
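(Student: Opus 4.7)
The plan is to realize each prescribed type as a permutation action on an infinite Bernoulli space and then lift it to a Bernoulli action via Proposition \ref{prop.conservative-locally-finite}. I fix an exhausting chain $G = \bigcup_n G_n$ of finite subgroups with $G_0 = \{e\}$ and take the marginals constant on layers: $\mu_g(0) = \lambda_n$ for $g \in G_n \setminus G_{n-1}$. Part (1) of that proposition produces, for any sequence $(\lambda_n)$ in $(0,1)$, a choice of $(G_n)$ making $G \actson (X,\mu)$ nonsingular and conservative; part (2) gives $L^\infty(X \times \R)^G = L^\infty(X \times \R)^{\cS_G}$, so the associated flows and Krieger types of the Bernoulli action and the permutation action $\cS_G \actson (X,\mu)$ coincide. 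Theorem \ref{thm.dichotomy-dissipative-weakly-mixing} then delivers weak mixing for free. Thus it suffices, for each target type, to choose $(\lambda_n)$ so that $\cS_G \actson (X,\mu)$ realizes that type, which is a purely ITPFI problem handled by the theory recalled in Section \ref{sec.permutation-action}.

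For II$_1$ and III$_\lambda$ with $\lambda \in (0,1]$ the choice is short. Constant $\lambda_n \equiv \lambda \in (0,1)$ gives a $G$-invariant product measure, hence II$_1$. A sequence with $\lim_n \lambda_n = \lambda \in (0,1)$ and $\sum_n |G_n \setminus G_{n-1}| (\lambda_n - \lambda)^2 = \infty$ triggers Proposition \ref{prop.III-1-permutation-action}(1), yielding III$_1$. A sequence $(\lambda_n)$ taking exactly two values $p, q \in (0,1)$ infinitely often, chosen so that $p(1-q)/((1-p)q) = \lambda$, realizes III$_\lambda$ by Proposition \ref{prop.III-1-permutation-action}(2) (giving $\lambda^\Z \subset r(\al)$) combined with the observation that when the marginals take only two values, every finite-permutation Radon--Nikodym derivative already lies in $\lambda^\Z$ (giving the reverse inclusion). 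For III$_0$ I take $\lambda_n \to 0$ with $\sum_n |G_n \setminus G_{n-1}|\, \lambda_n = \infty$ (ensuring nonatomicity and, via \cite{SV77}, ruling out type II) and with the deeper lacunarity condition on the $\lambda_n$'s that forces the ITPFI$_2$ ratio set to reduce to $\{0,1\}$, which is a classical Araki--Woods-style construction.

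The interesting new case is II$_\infty$, which rests on the following observation about locally finite groups: for any $g \in G_m$ and $n > m$ one has $g G_n = G_n$ and $g G_{n-1} = G_{n-1}$, so $g$ permutes the layer $G_n \setminus G_{n-1}$. Hence the alternating union $W := \bigsqcup_{n \text{ odd}} (G_n \setminus G_{n-1})$ satisfies $gW \vartriangle W \subset G_m$ for every $g \in G_m$ and is almost invariant, as is $V := G \setminus W$; both are infinite as soon as the chain is strictly increasing. I then set $\lambda_n = \lambda \in (0,1)$ for odd $n$ and $\lambda_n = \eps_n$ for even $n$, choosing $\eps_n \searrow 0$ with $\sum_n |G_{2n} \setminus G_{2n-1}|\,\eps_n < \infty$. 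Then $\sum_{g \in W}(\mu_g(0) - \lambda)^2 = 0$ and $\sum_{g \in V} \min\{\mu_g(0),\mu_g(1)\} < \infty$, so Theorem \ref{thm.type-abelian-locally-finite} forces the type of $G \actson (X,\mu)$ to be II$_\infty$.

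The main technical obstacle, common to the II$_\infty$ and III$_0$ constructions, is that Proposition \ref{prop.conservative-locally-finite}(1) imposes a lower bound on $|G_n \setminus G_{n-1}|$ (depending on $\lambda_1,\dots,\lambda_n$) to force conservativeness, whereas the summability conditions above impose upper bounds on products $|G_n \setminus G_{n-1}| \cdot \eps_n$. I would resolve this by a joint induction: at each step $n$, first enlarge $G_n$ beyond the conservativeness threshold from the proposition (possible since $G$ is infinite and locally finite, so arbitrarily large finite subgroups are available), and only then select $\eps_{n+1}$ small enough that the next summability contribution is at most $2^{-n}$. Since both thresholds are finite constraints that can always be met in that order, the construction closes and produces weakly mixing Bernoulli actions of every claimed type.
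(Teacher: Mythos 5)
Your treatment of types II$_1$, III$_1$, III$_\lambda$ for $\lambda\in(0,1)$ and III$_0$ is essentially the paper's argument: reduce to the type of $\cS_G \actson (X,\mu)$ via Proposition \ref{prop.conservative-locally-finite} and the results recalled in Section \ref{sec.permutation-action} (the paper gets III$_1$ from two values generating a dense subgroup of $\R^*_+$ via Proposition \ref{prop.III-1-permutation-action}(2) rather than from a convergent sequence via part (1), but both routes work, and your III$_0$ case is asserted at the same level of detail as the paper's appeal to \cite{HO83}). The genuine gap is in the II$_\infty$ case, at the one point where something nontrivial must be proved: conservativeness. Theorem \ref{thm.type-abelian-locally-finite} only applies to non-dissipative actions, and your proposed joint induction for securing this is circular. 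In Proposition \ref{prop.conservative-locally-finite}, the lower bound on $|G_n\setminus G_{n-1}|$ forcing conservativeness is not a threshold you can meet \emph{before} choosing $\lambda_n$: the quantity to be made small is $\tfrac{1}{|G_n\setminus G_{n-1}|}\int_X \tfrac{d\mu(x)}{d\mu(g\cdot x)}\,d\mu(x)$ for $g$ in the $n$-th layer, and this integral contains, for every $k\in G_{n-1}$ lying in a $\lambda$-layer, a factor $\tfrac{\lambda^2}{\lambda_n}+\tfrac{(1-\lambda)^2}{1-\lambda_n}\geq \lambda^2/\lambda_n$ (since $gk$ lies in the $n$-th layer). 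So the threshold itself depends on $\lambda_n=\eps_{n/2}$ and forces $|G_{2n}\setminus G_{2n-1}|\,\eps_n \geq n\lambda^2$ at the very least (and in fact blows up like $\eps_n^{-|W\cap G_{2n-1}|}$), which is incompatible with $\sum_n |G_{2n}\setminus G_{2n-1}|\,\eps_n<+\infty$. You cannot ``first enlarge $G_{2n}$, then shrink $\eps_n$'': shrinking $\eps_n$ moves the threshold. The two constraints genuinely pull in opposite directions and this route does not close.

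The paper avoids Proposition \ref{prop.conservative-locally-finite} entirely for the II$_\infty$ example and proves conservativeness by hand, imposing largeness on the $\lambda$-layers rather than on the small-probability layers: it requires $\bigl(1-\lambda^{|G_{2n}|}\bigr)^{[G_{2n+1}:G_{2n}]-1}<n^{-2}$, restricts to the positive-measure set $X_1$ of configurations identically equal to $1$ on the small-probability layers, checks that $\cR(G\actson X)|_{X_1}$ preserves a probability measure (the same computation as at the end of the proof of Theorem \ref{thm.type-abelian-locally-finite}), and uses Borel--Cantelli to produce, for a.e.\ $x\in X_1$ and all large $n$, an element $g_n\in G_{2n+1}\setminus G_{2n}$ with $g_n^{-1}\cdot x\in X_1$. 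A probability-measure-preserving ergodic restriction with infinite orbits is conservative, and $G\cdot X_1$ is conull, so the action is conservative. Some such largeness condition on the $\lambda$-layers, absent from your construction, appears unavoidable. The remaining steps of your II$_\infty$ argument (almost invariance of the alternating union $W$, verification of \eqref{eq.condition-II-infty}) are correct and match the paper.
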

\begin{proof}
Obviously, $G$ admits a pmp Bernoulli action of type II$_1$. Taking $\lambda \in (0,1)$ and applying Proposition \ref{prop.conservative-locally-finite} to the sequence $1/2$, $(1+\lambda)^{-1}$, $1/2$, $(1+\lambda)^{-1}, \ldots$, we obtain a conservative Bernoulli action with the following properties. By Theorem \ref{thm.dichotomy-dissipative-weakly-mixing}, the action $G \actson (X,\mu)$ is weakly mixing. By Proposition \ref{prop.III-1-permutation-action}, $\lambda$ belongs to the ratio set of $\cS_G \actson (X,\mu)$. By construction, the Radon-Nikodym derivative of $\cS_G \actson (X,\mu)$ only takes values that are powers of $\lambda$. So, $\cS_G \actson (X,\mu)$ is of type III$_\lambda$. By Proposition \ref{prop.conservative-locally-finite}, also $G \actson (X,\mu)$ is of type III$_\lambda$.

Choosing $\lambda_1,\lambda_2 \in (0,1)$ generating a dense subgroup of $\R^*_+$, applying the same reasoning to the sequence $1/2$, $(1+\lambda_1)^{-1}$, $(1+\lambda_2)^{-1}$, $1/2, \ldots$, we obtain a weakly mixing nonsingular Bernoulli action $G \actson (X,\mu)$ of type III$_1$.

Next, fix $\lambda \in (0,1)$ and consider the sequence $\lambda_n = \lambda^{2^n}$. By Proposition \ref{prop.conservative-locally-finite}, we find a strictly increasing sequence of subgroups $G_n < G$ so that the associated Bernoulli action is nonsingular and conservative. From the proof of Proposition \ref{prop.conservative-locally-finite}, we see that we can find such $G_n$ with $|G_n \setminus G_{n-1}|$ growing arbitrarily fast. So by \cite{HO83} (see also \cite[Section 3]{GSW84}), we can make this choice so that the homoclinic equivalence relation $\cR_G$ on $(X,\mu)$ is of type III$_0$ (see Section \ref{sec.permutation-action} for the definition of $\cR_G$). By Proposition \ref{prop.conservative-locally-finite}, it follows that
\begin{equation}\label{eq.theflow}
L^\infty(X \times \R)^{\cR_G} \subset L^\infty(X \times \R)^{\cS_G} = L^\infty(X \times \R)^G \; .
\end{equation}
By Theorem \ref{thm.dichotomy-dissipative-weakly-mixing}, $G \actson (X,\mu)$ is weakly mixing. By \eqref{eq.theflow}, the (ergodic) flow associated to $G \actson (X,\mu)$ admits a properly ergodic flow of $\R$ as an $\R$-factor. So also $G \actson (X,\mu)$ is of type III$_0$.

Finally, we construct an example of type II$_\infty$. Fix $\lambda \in (0,1)$. Inductively choose a strictly increasing sequence of finite subgroups $G_n < G$ such that
$$\bigcup_n G_n = G \quad\text{and}\quad \bigl(1 - \lambda^{|G_{2n}|}\bigr)^{[G_{2n+1}:G_{2n}]-1} < \frac{1}{n^2} \quad\text{for all $n \in \N$.}$$
Note that this only imposes to choose $G_{2n+1}$ much larger than $G_{2n}$. At the even steps, we just require $G_{2n}$ to be strictly larger than $G_{2n-1}$.

Next choose a sequence $\gamma_n \in (0,1)$ tending to zero sufficiently fast such that
$$\sum_{n=1}^\infty \gamma_n \, |G_{2n} \setminus G_{2n-1}| < +\infty \; .$$
Define the probability measures $\mu_g$ on $\{0,1\}$ given by $\mu_g(0) = \gamma_n$ if $g \in G_{2n} \setminus G_{2n-1}$ for some $n \geq 1$, while $\mu_g(0) = 1-\lambda$ if $g \in G_{2n+1} \setminus G_{2n}$ for some $n \geq 0$. The associated Bernoulli action $G \actson (X,\mu)$ is nonsingular. We prove that it is weakly mixing and of type II$_\infty$.

Consider the orbit equivalence relation $\cR = \cR(G \actson X)$. Define the increasing sequence of subsets $X_n \subset X$ given by
$$X_n = \{x \in X \mid x_k = 1 \;\;\text{for all $k \in G_{2m} \setminus G_{2m-1}$ and all $m \geq n$}\} \; .$$
Then, $\mu(X_1) > 0$ and $\mu(X_n) \recht 1$. We denote $\nu = \mu(X_1)^{-1} \, \mu|_{X_1}$. The argument at the end of the proof of Theorem \ref{thm.type-abelian-locally-finite} shows that the restriction of $\cR$ to $X_1$ preserves the probability measure $\nu$. Below, we prove that the equivalence relation $\cR|_{X_1}$ has infinite orbits a.e.\ and that $G \cdot X_1$ has a complement of measure zero. Since the equivalence relation $\cR|_{X_1}$ is probability measure preserving, it follows that $G \actson (X,\mu)$ is conservative. By Theorem \ref{thm.dichotomy-dissipative-weakly-mixing}, $G \actson (X,\mu)$ is weakly mixing. In particular, $G \actson (X,\mu)$ is ergodic and thus of type II$_\infty$.

For every $n \geq 1$, define the subsets $Y_n \subset X$ given by
$$Y_n = \{x \in X \mid \text{there exists a $g \in G_{2n+1} \setminus G_{2n}$ such that $x_{gk} = 1$ for all $k \in G_{2n}$}\} \; .$$
Writing $G_{2n+1} \setminus G_{2n}$ as the disjoint union of $[G_{2n+1} : G_{2n}]-1$ left cosets of $G_{2n}$, it follows that
$$\nu(Y_n) = 1 - \bigl(1 - \lambda^{|G_{2n}|}\bigr)^{[G_{2n+1}:G_{2n}]-1} > 1 - \frac{1}{n^2} \; .$$
When $x \in Y_n \cap X_{n+1}$, we can take a $g \in G_{2n+1} \setminus G_{2n}$ such that $x_{gk} = 1$ for all $k \in G_{2n}$. Since $x \in X_{n+1}$ and since for all $m \geq n+1$, we have $g(G_{2m} \setminus G_{2m-1}) = G_{2m} \setminus G_{2m-1}$, it follows that also $x_{gk}=1$ for all $k \in G_{2m} \setminus G_{2m-1}$ and all $m \geq n+1$. So, $g^{-1} \cdot x \in X_1$. This means in particular that $Y_n \cap X_{n+1} \subset G \cdot X_1$ for all $n \geq 1$. It follows that $G \cdot X_1$ has a complement of measure zero.

By the Borel-Cantelli lemma, almost every $x \in X_1$ has the property that $x \in Y_n$ for all large enough $n$. When $x \in X_1 \cap \bigcap_{n=n_0}^\infty Y_n$, the reasoning in the previous paragraph provides for every $n \geq n_0$ an element $g_n \in G_{2n+1} \setminus G_{2n}$ such that $g_n^{-1} \cdot x \in X_1$. By Lemma \ref{lem.essential-free}, the action $G \actson (X,\mu)$ is essentially free. We conclude that $\cR|_{X_1}$ has infinite orbits a.e.
\end{proof}

\section{Strongly conservative actions}\label{sec.strongly-conservative}

\begin{lemma}\label{lem.ucp}
Let $G$ be a countable group and $G \actson (X,\mu)$ a nonsingular action on a standard probability space $(X,\mu)$. Let $\eta$ be a probability measure on $G$. Then the map
$$\theta_\eta : L^\infty(X,\mu) \recht L^\infty(X,\mu) : \theta_\eta(F) = \sum_{g \in G} \eta(g) \; \frac{\dis\sum_{k \in G} \eta(gk^{-1}) \, \frac{d\mu(k \cdot x)}{d \mu(x)} \, F(k \cdot x)}{\dis\sum_{k \in G} \eta(gk^{-1}) \, \frac{d\mu(k \cdot x)}{d \mu(x)}}$$
is unital, positive and measure preserving, meaning that $\int_X \theta_\eta(F) \, d\mu = \int_X F \, d\mu$.
\end{lemma}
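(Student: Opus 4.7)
The plan is to verify the three properties separately; unitality and positivity fall out immediately from the explicit formula, while measure preservation is the substantive step. For $F = 1$, the inner ratio equals $1$ for every $g$ (numerator equals denominator), so $\theta_\eta(1) = \sum_g \eta(g) = 1$. Positivity is clear since the ratio is nonnegative whenever $F \geq 0$. One should first note that the denominator
$$\psi_g(x) := \sum_{k \in G} \eta(gk^{-1}) \, \frac{d\mu(k \cdot x)}{d\mu(x)}$$
is strictly positive a.e.\ because $\eta$ has nonempty support and Radon-Nikodym derivatives are a.e.\ positive, so the ratio is unambiguous and $\theta_\eta$ is automatically bounded by $\|F\|_\infty$.

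For measure preservation, I would first reduce to $F \geq 0$ so that Tonelli applies, interchange the outer sum and integral, and write
$$\int_X \theta_\eta(F) \, d\mu = \sum_{g,k} \eta(g) \, \eta(gk^{-1}) \int_X \frac{\frac{d\mu(k \cdot x)}{d\mu(x)} \, F(k \cdot x)}{\psi_g(x)} \, d\mu(x).$$
In each integral I would perform the nonsingular change of variable $x \mapsto k^{-1} \cdot x$, which absorbs the factor $d\mu(k \cdot x)/d\mu(x)$ via the Radon-Nikodym formula recalled in the preliminaries. Using the $1$-cocycle identity and reindexing the inner sum defining $\psi_g$ by $l = jk^{-1}$, one checks that
$$\psi_g(k^{-1} \cdot x) \; = \; \psi_{gk^{-1}}(x) \, \bigl( d\mu(k^{-1} \cdot x)/d\mu(x) \bigr)^{-1},$$
so that the previous display becomes
$$\sum_{g,k} \eta(g) \, \eta(gk^{-1}) \int_X F(x) \, \frac{d\mu(k^{-1} \cdot x)/d\mu(x)}{\psi_{gk^{-1}}(x)} \, d\mu(x).$$
Finally, I would reindex the outer sum by $h = gk^{-1}$ (so $g = hk$ and $\eta(g)\eta(gk^{-1}) = \eta(hk)\eta(h)$) and substitute $l = k^{-1}$ in the resulting $k$-sum; the inner sum $\sum_k \eta(hk) \frac{d\mu(k^{-1} \cdot x)}{d\mu(x)}$ collapses to $\sum_l \eta(hl^{-1}) \frac{d\mu(l \cdot x)}{d\mu(x)} = \psi_h(x)$, which exactly cancels the remaining denominator. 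What is left is $\sum_h \eta(h) \int_X F \, d\mu = \int_X F \, d\mu$.

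The main obstacle is purely bookkeeping: two distinct substitutions—one on the space $X$ via the nonsingular transformation $k$, one on the group $G$ via $l = k^{-1}$ together with the reindexing $h = gk^{-1}$—must combine just so that the transformed denominator $\psi_{gk^{-1}}$ is reconstructed and canceled by the resulting inner sum over $k$. Once the indices are tracked carefully, the chain rule (i.e.\ the cocycle property of the Radon-Nikodym derivative) makes the algebraic cancellation automatic, and no further input about the particular action is needed.
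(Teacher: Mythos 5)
Your proof is correct, but it takes a genuinely different route from the paper. You verify the three properties by hand: unitality and positivity are read off the formula, and measure preservation is established by Tonelli, the change of variable $x \mapsto k^{-1}\cdot x$, the chain rule for Radon--Nikodym derivatives (which yields the identity $\psi_g(k^{-1}\cdot x) = \psi_{gk^{-1}}(x)\,\bigl(d\mu(k^{-1}\cdot x)/d\mu(x)\bigr)^{-1}$ -- I checked this and the subsequent reindexings $h = gk^{-1}$, $l = k^{-1}$, and they all come out right), after which the transformed denominator is regenerated by the inner sum and cancels. The paper instead gives a structural argument: it introduces the probability space $(G \times X, \eta \times \mu)$ and two trace-preserving embeddings $\pi_1, \pi_2$ of $L^\infty(X)$ into $L^\infty(G \times X)$, namely $(\pi_1 F)(g,x) = F(g\cdot x)$ (with respect to the pushforward measure $\nu$) and $(\pi_2 F)(g,x) = F(x)$ (with respect to $\mu$), and then observes that $\theta_\eta = \pi_2^{-1}\circ E_2 \circ E_1 \circ \pi_2$ where $E_i$ are the trace-preserving conditional expectations onto $\pi_i(A_i)$; unitality, positivity and measure preservation are then automatic for the composition, with no computation at all. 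Your approach buys self-containedness and makes explicit why the formula is measure preserving at the level of the kernel $\xi(h,x)$ (which is in fact how the map is later used, e.g.\ in Lemma \ref{lem.consequence-strong-conservative}); the paper's approach buys brevity and explains where the formula comes from. The only small points you should make sure to spell out in a final write-up are the a.e.\ strict positivity of the denominator (which you do note) and the passage from $F \geq 0$ to general $F$ by linearity together with the bound $\|\theta_\eta(F)\|_\infty \leq \|F\|_\infty$; neither is an obstacle.
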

\begin{proof}
Consider the probability measure $\eta \times \mu$ on $G \times X$. Define the probability measure $\nu$ on $X$ given by
$$\int_X H(x) \, d \nu(x) = \sum_{g \in G} \eta(g) \, \int_X H(g \cdot x) \, d\mu(x) \; .$$
Write $A = L^\infty(G \times X,\eta \times \mu)$. Denote $A_1 = L^\infty(X,\nu)$ and $A_2 = L^\infty(X,\mu)$. Equip $A,A_1,A_2$ with the traces given by $\eta \times \mu, \nu,\mu$. Then,
$$\pi_1 : A_1 \recht A : (\pi_1(F))(g,x) = F(g \cdot x) \quad\text{and}\quad \pi_2 : A_2 \recht A : (\pi_2(F))(g,x) = F(x)$$
are trace preserving, unital $*$-homomorphisms. Denote by $E_i : A \recht \pi_i(A_i)$ the unique trace preserving conditional expectations. Then,
$$\theta_\eta = \pi_2^{-1} \circ E_2 \circ E_1 \circ \pi_2 \; .$$
\end{proof}

Motivated by Lemma \ref{lem.ucp}, we introduce the following ad hoc definition.

\begin{definition}\label{def.strong-conservative}
Let $G$ be a countable group and $G \actson (X,\mu)$ a nonsingular action on a standard probability space $(X,\mu)$. We say that a sequence of probability measures $\eta_n$ on $G$ is strongly recurrent if
\begin{equation}\label{eq.strongly-conservative}
\sum_{g \in G} \eta_n(g)^2 \, \int_X \Bigl( \sum_{k \in G} \eta_n(gk^{-1}) \, \frac{d \mu(k \cdot x)}{d \mu(x)} \Bigr)^{-1} \; d \mu(x) \recht 0 \;\; .
\end{equation}
We say that $G \actson (X,\mu)$ is \emph{strongly conservative} if such a strongly recurrent sequence of probability measures exists.
\end{definition}

\begin{proposition}\label{prop.characterize-strong-conservative}
Let $G$ be a countable group and $G \actson (X,\mu)$ a nonsingular action on a standard probability space $(X,\mu)$.
\begin{enumlist}
\item If $G \actson (X,\mu)$ is strongly conservative, then $G \actson (X,\mu)$ is conservative.
\item If $G$ is amenable and $G \actson (X,\mu)$ is conservative, then the uniform probability measures $\eta_n$ on a right F{\o}lner sequence $\cF_n \subset G$ are strongly recurrent, so that $G \actson (X,\mu)$ is strongly conservative.
\item A sequence of probability measures $\eta_n$ on $G$ satisfying
\begin{equation}\label{eq.est-2}
\sum_{k \in G} \; \Bigl(\sum_{g \in G} \eta_n(g)^2 \, \eta_n(gk^{-1}) \Bigr) \; \int_X \frac{d\mu(x)}{d\mu(k \cdot x)} \; d\mu(x) \recht 0 \; ,
\end{equation}
is strongly recurrent.
\end{enumlist}
\end{proposition}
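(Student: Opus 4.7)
The plan is to prove each of the three items via a different application of the Cauchy-Schwarz inequality. Throughout, write $r_k(x) = d\mu(k \cdot x)/d\mu(x)$, $f_g(x) = \sum_k \eta_n(gk^{-1}) r_k(x)$, and denote by $I_n$ the quantity in \eqref{eq.strongly-conservative}.

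For item~1, I apply Cauchy-Schwarz to the identity $1 = \sum_g \eta_n(g) = \sum_g \bigl(\eta_n(g) f_g(x)^{-1/2}\bigr) \cdot f_g(x)^{1/2}$, obtaining
$$1 \leq \Bigl(\sum_g \eta_n(g)^2 f_g(x)^{-1}\Bigr) \Bigl(\sum_g f_g(x)\Bigr).$$
A swap of summations yields $\sum_g f_g(x) = \sum_k r_k(x) =: M(x)$, so pointwise $\sum_g \eta_n(g)^2 f_g(x)^{-1} \geq M(x)^{-1}$ (with the convention $1/\infty = 0$). Integrating gives $I_n \geq \int_X M^{-1}\, d\mu$ for every $n$; if the action fails to be conservative then $M < +\infty$ on a set of positive measure and this lower bound is strictly positive, contradicting $I_n \to 0$. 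For item~3, Cauchy-Schwarz is applied with a different splitting: for each $g$,
$$1 = \sum_k \eta_n(gk^{-1}) \leq f_g(x)^{1/2} \Bigl(\sum_k \eta_n(gk^{-1}) r_k(x)^{-1}\Bigr)^{1/2},$$
so $f_g(x)^{-1} \leq \sum_k \eta_n(gk^{-1}) r_k(x)^{-1}$. Multiplying by $\eta_n(g)^2$, integrating against $\mu$, summing over $g$ and swapping the two summations produces exactly \eqref{eq.est-2} as an upper bound for $I_n$, so any sequence satisfying \eqref{eq.est-2} is strongly recurrent.

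Item~2 is the most delicate. Taking $\eta_n = |\mathcal{F}_n|^{-1} 1_{\mathcal{F}_n}$ with $\mathcal{F}_n$ an increasing right F{\o}lner sequence exhausting $G$, a direct computation gives
$$I_n = \frac{1}{|\mathcal{F}_n|} \sum_{g \in \mathcal{F}_n} J_n(g), \qquad J_n(g) := \int_X \Bigl(\sum_{k \in \mathcal{F}_n^{-1} g} r_k(x)\Bigr)^{-1} d\mu(x).$$
Since $g \in \mathcal{F}_n$ forces $e \in \mathcal{F}_n^{-1}g$, one has $J_n(g) \leq 1$; and for each fixed $g$, conservativeness combined with dominated convergence on the exhaustion $\mathcal{F}_n^{-1}g \nearrow G$ (using $\sum_{k \in G} r_k(x) = +\infty$ a.e.) yields $J_n(g) \to 0$. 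The main obstacle is to upgrade this pointwise-in-$g$ statement to convergence of the Ces\`aro mean, since a priori the boundary of $\mathcal{F}_n$ could carry all the mass. My plan is to exploit the right F{\o}lner property, which gives $|\mathcal{F}_n^{-1} g \vartriangle \mathcal{F}_n^{-1}|/|\mathcal{F}_n| \to 0$ for each fixed $g$, to compare $J_n(g)$ with $K_n := \int_X S_n^{-1}\, d\mu$, where $S_n = \sum_{h \in \mathcal{F}_n^{-1}} r_h$, on the ``bulk'' of $\mathcal{F}_n$; the ``boundary'' contribution is absorbed by the uniform estimate $J_n(g) \leq 1$, and $K_n \to 0$ by dominated convergence using $S_n \to +\infty$ a.e.\ (a consequence of conservativeness).
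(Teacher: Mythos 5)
Items 1 and 3 of your proposal are correct and are in substance the same estimates as in the paper: where you apply Cauchy--Schwarz to $1=\sum_g \eta_n(g)$ and to $1=\sum_k \eta_n(gk^{-1})$, the paper applies Jensen's inequality for the convex function $t\mapsto t^{-1}$ to the same convex combinations, and the resulting pointwise bounds $\sum_g \eta_n(g)^2 f_g(x)^{-1}\geq \bigl(\sum_k r_k(x)\bigr)^{-1}$ and $f_g(x)^{-1}\leq \sum_k \eta_n(gk^{-1})\, r_k(x)^{-1}$ are identical to the ones in the paper's proof.

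The gap is in item 2, precisely at the step you flag as the plan: comparing $J_n(g)$ with $K_n$ on the bulk of $\cF_n$. First, $|\cF_n^{-1}g \vartriangle \cF_n^{-1}| = |g^{-1}\cF_n \vartriangle \cF_n|$, so the smallness of this quantity relative to $|\cF_n|$ is the \emph{left} F{\o}lner property, which a right F{\o}lner sequence need not satisfy. More seriously, even for a two-sided F{\o}lner sequence, smallness of $|\cF_n^{-1}\setminus \cF_n^{-1}g|$ in counting measure gives no control on $\sum_{k\in \cF_n^{-1}\setminus \cF_n^{-1}g} r_k(x)$, because the Radon--Nikodym derivatives $r_k$ are unbounded: deleting a proportion $o(1)$ of the indices from $\cF_n^{-1}$ can destroy essentially all of $S_n(x)$ on a non-null set of $x$. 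Hence there is no inequality of the form $\sum_{k\in\cF_n^{-1}g} r_k \geq c\, S_n$, and no bound of $J_n(g)$ by a quantity comparable to $K_n$. The paper's proof circumvents this by comparing with a sum over a \emph{fixed finite} set rather than over $\cF_n^{-1}$: given $\eps>0$, conservativeness yields a single finite $\cL\subset G$ with $\int_X \bigl(\sum_{k\in\cL} r_k\bigr)^{-1}d\mu<\eps$, and the right F{\o}lner property guarantees $\cL\subset\cF_n^{-1}g$ for all $g$ in $\cF'_n=\cF_n\cap\bigcap_{s\in\cL}\cF_n s$, a subset with $|\cF_n\setminus\cF'_n|/|\cF_n|<\eps$ for $n$ large. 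Monotonicity over the fixed set $\cL$ then gives $J_n(g)<\eps$ on the bulk, while $J_n(g)\leq 1$ absorbs the boundary, exactly in the spirit you intended. This argument also works for an arbitrary right F{\o}lner sequence, removing your extra assumption that $\cF_n$ is increasing and exhausts $G$, which the statement does not make.
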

\begin{proof}
1. For every probability measure $\eta$ on $G$, write $\supp(\eta) = \{g \in G \mid \eta(g) > 0\}$. By convexity of the function $t \mapsto t^{-1}$, we have for every probability measure $\eta$ on $G$ that
\begin{align*}
\sum_{g \in G} \eta(g)^2 \, \Bigl(\sum_{k \in G} \eta(gk^{-1}) \, \frac{d\mu(k\cdot x)}{d\mu(x)}\Bigr)^{-1} &=
\sum_{g \in \supp(\eta)} \eta(g) \, \Bigl(\eta(g)^{-1} \, \sum_{k \in G} \eta(gk^{-1}) \, \frac{d\mu(k\cdot x)}{d\mu(x)}\Bigr)^{-1} \\[1ex]
& \geq \Bigl(\sum_{g \in \supp(\eta)} \eta(g) \, \eta(g)^{-1} \, \sum_{k \in G} \eta(gk^{-1}) \, \frac{d\mu(k\cdot x)}{d\mu(x)}\Bigr)^{-1} \\[1ex]
&\geq \Bigl( \sum_{k \in G} \frac{d\mu(k\cdot x)}{d\mu(x)}\Bigr)^{-1} \; .
\end{align*}
If $G \actson (X,\mu)$ is strongly conservative, with strongly recurrent sequence $\eta_n$, we take $\eta = \eta_n$ and integrate over $X$. It follows that
$$\Bigl( \sum_{k \in G} \frac{d\mu(k\cdot x)}{d\mu(x)}\Bigr)^{-1} = 0$$
for a.e.\ $x \in X$, so that $G \actson (X,\mu)$ is conservative.

2. Let $\cF_n \subset G$ be a right F{\o}lner sequence and define $\eta_n$ as the uniform probability measure on $\cF_n$. Let $\eps > 0$. Since $G \actson (X,\mu)$ is conservative, we can fix a finite subset $\cL \subset G$ such that
$$\int_X \Bigl( \sum_{k \in \cL} \frac{d\mu(k \cdot x)}{d\mu(x)} \Bigr)^{-1} \, d\mu(x) < \eps \; .$$
Since $\cF_n$ is a right F{\o}lner sequence, we can take $n_0$ such that
$$\cF'_n := \cF_n \cap \bigcap_{s \in \cL} \cF_n s \quad\text{satisfies}\quad \frac{|\cF_n \setminus \cF'_n|}{|\cF_n|} < \eps$$
for all $n \geq n_0$. When $g \in \cF'_n$, we have $\cL \subset \cF_n^{-1} g$. Also, for every $g \in \cF_n$, we have that $e \in \cF_n^{-1} g$ so that
$$\sum_{k \in \cF_n^{-1} g} \frac{d\mu(k \cdot x)}{d\mu(x)} \geq 1$$
for all $x \in X$. Therefore, for all $n \geq n_0$,
\begin{align*}
\sum_{g \in G} \eta_n(g)^2 \, & \, \int_X \Bigl( \sum_{k \in G} \eta_n(gk^{-1}) \, \frac{d \mu(k \cdot x)}{d \mu(x)} \Bigr)^{-1} \; d \mu(x) \\[1ex] & =
\frac{1}{|\cF_n|} \, \sum_{g \in \cF_n} \int_X \Bigl( \sum_{k \in \cF_n^{-1} g} \frac{d\mu(k \cdot x)}{d\mu(x)} \Bigr)^{-1} \; d\mu(x) \\[1ex]
&\leq \frac{|\cF_n \setminus \cF'_n|}{|\cF_n|} + \frac{1}{|\cF_n|} \sum_{g \in \cF'_n} \int_X \Bigl( \sum_{k \in \cL} \frac{d\mu(k \cdot x)}{d\mu(x)} \Bigr)^{-1} \; d\mu(x) < 2\eps \; .
\end{align*}
So the sequence $\eta_n$ is strongly recurrent and $G \actson (X,\mu)$ is strongly conservative.

3. Let $\eta$ be a probability measure on $G$. By convexity of $t \mapsto t^{-1}$, we have
$$\sum_{k \in G} \eta(gk^{-1}) \, \frac{d \mu(k \cdot x)}{d \mu(x)} \geq \Bigl( \sum_{k \in G} \eta(gk^{-1}) \, \frac{d \mu(x)}{d \mu(k \cdot x)} \Bigr)^{-1} \; .$$
It follows that the left hand side of \eqref{eq.strongly-conservative} is bounded above by the expression in \eqref{eq.est-2}.
\end{proof}

\begin{lemma}\label{lem.consequence-strong-conservative}
Let $G$ be a countable group and $G \actson (X,\mu)$ a strongly conservative nonsingular action on a standard probability space $(X,\mu)$. Let $\vphi \in \Aut(X,\mu)$ be a nonsingular automorphism satisfying the following two properties.
\begin{enumlist}
\item There exist a $C > 0$ such that
\begin{equation}\label{eq.bound-C}
C^{-1} \leq \frac{d\mu(g \cdot \vphi(x))}{d\mu(g \cdot x)} \leq C
\end{equation}
for all $g \in G$ and a.e.\ $x \in X$.
\item There is an $L^1$-dense set of functions $F_0 \in L^\infty(X)$ with the property that
\begin{equation}\label{eq.property-F0}
|F_0(g \cdot \vphi(x)) - F_0(g \cdot x)| \recht 0 \quad\text{uniformly in $x \in X$ as $g \recht \infty$.}
\end{equation}
\end{enumlist}
Then every $G$-invariant $F \in L^\infty(X)$ satisfies $F(\vphi(x)) = F(x)$ for a.e.\ $x \in X$.
\end{lemma}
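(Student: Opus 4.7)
The plan is to use the UCP maps $\theta_{\eta_n}$ from Lemma~\ref{lem.ucp}, with $(\eta_n)$ any strongly recurrent sequence on $G$ (which exists by strong conservativeness), together with a triangle-inequality reduction. The starting observation is that $G$-invariance of $F$ forces $\theta_{\eta_n}(F)=F$ for every $n$, because inside the inner weighted average over $k$ the values $F(k\cdot x)=F(x)$ are constant. Fix $\eps>0$ and pick $F_0$ from the dense set of condition~2 with $\|F-F_0\|_1<\eps$. Since $\theta_{\eta_n}$ is UCP and $\mu$-preserving (Lemma~\ref{lem.ucp}) it is an $L^1$-contraction, so $\|\theta_{\eta_n}(F_0)-F\|_1\leq\eps$; condition~1 at $g=e$ gives $d\mu(\vphi(x))/d\mu(x)\in[C^{-1},C]$, whence composition with $\vphi$ is bounded on $L^1$ with norm at most $C$. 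The triangle inequality then yields
\begin{equation*}
\|F\circ\vphi - F\|_1 \leq (C+1)\eps + \|\theta_{\eta_n}(F_0)\circ\vphi - \theta_{\eta_n}(F_0)\|_1,
\end{equation*}
and the task reduces to showing that the last term vanishes as $n\to\infty$ (and then letting $\eps\to 0$).

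For this key estimate I would use the cocycle identity
\begin{equation*}
\frac{d\mu(k\cdot\vphi(x))}{d\mu(\vphi(x))} = \frac{\alpha(k,x)}{\beta(x)}\cdot\frac{d\mu(k\cdot x)}{d\mu(x)},\quad \alpha(k,x):=\frac{d\mu(k\cdot\vphi(x))}{d\mu(k\cdot x)},\quad \beta(x):=\frac{d\mu(\vphi(x))}{d\mu(x)},
\end{equation*}
where $\alpha,\beta\in[C^{-1},C]$ by condition~1 and the factor $\beta(x)$ cancels in the ratio. Letting $A(g,x)$ and $A_\vphi(g,x)$ denote the inner ratios defining $\theta_{\eta_n}(F_0)(x)$ and $\theta_{\eta_n}(F_0)(\vphi(x))$, I introduce the hybrid
\begin{equation*}
A^*(g,x) := \frac{\sum_k \eta_n(gk^{-1})\,\alpha(k,x)\,\frac{d\mu(k\cdot x)}{d\mu(x)}\,F_0(k\cdot x)}{\sum_k \eta_n(gk^{-1})\,\alpha(k,x)\,\frac{d\mu(k\cdot x)}{d\mu(x)}}
\end{equation*}
that uses $A_\vphi$'s $\alpha$-tilted weights but the untwisted values $F_0(k\cdot x)$, and decompose $A_\vphi-A=(A_\vphi-A^*)+(A^*-A)$ to isolate the change of values from the change of weights.

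For $A_\vphi-A^*$ (values only): choose a finite $K_\eps\subset G$ so that $|F_0(k\cdot\vphi(x))-F_0(k\cdot x)|<\eps$ uniformly in $x$ for $k\notin K_\eps$ (condition~2); the $k\notin K_\eps$ part is bounded pointwise by $\eps$, and the $k\in K_\eps$ part is at most $2C^2\|F_0\|_\infty\,\rho_n(g,x)$ with $\rho_n(g,x):=\sum_{k\in K_\eps}\eta_n(gk^{-1})\frac{d\mu(k\cdot x)}{d\mu(x)}/S(g,x)$. The core step is showing $\int \sum_g\eta_n(g)\rho_n(g,x)\,d\mu(x)\to0$: for each fixed $k$, a pointwise Cauchy--Schwarz in $g$ followed by the integral Cauchy--Schwarz gives
\begin{equation*}
\int \sum_g \frac{\eta_n(g)\,\eta_n(gk^{-1})\,\frac{d\mu(k\cdot x)}{d\mu(x)}}{S(g,x)}\,d\mu \leq \Bigl(\int\sum_g\frac{\eta_n(g)^2}{S(g,x)}\,d\mu\Bigr)^{1/2}\Bigl(\int\sum_g\frac{\eta_n(gk^{-1})^2(d\mu(k\cdot x)/d\mu(x))^2}{S(g,x)}\,d\mu\Bigr)^{1/2};
\end{equation*}
the first factor vanishes by the strong-recurrence bound \eqref{eq.strongly-conservative}, and the substitution $g'=gk^{-1}$, the cocycle identity $S(g'k,x)=\frac{d\mu(k\cdot x)}{d\mu(x)}\,S(g',k\cdot x)$, and change of variable $y=k\cdot x$ transform the second factor into $\sum_{g'}\eta_n(g')^2\int S(g',y)^{-1}d\mu(y)$, which also vanishes. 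For $A^*-A$ (weights only, same values): using $G$-invariance of $F$, write $F_0(k\cdot x)=F(x)+[F_0(k\cdot x)-F(k\cdot x)]$ so that the constant $F(x)$ cancels between $A^*$ and $A$, giving $|A^*-A|\leq(C^2+1)\cdot\frac{\sum_k \eta_n(gk^{-1})\frac{d\mu(k\cdot x)}{d\mu(x)}|F_0-F|(k\cdot x)}{S(g,x)}$; the $\sum_g\eta_n(g)$-average integrates to $(C^2+1)\int\theta_{\eta_n}(|F_0-F|)\,d\mu=(C^2+1)\|F_0-F\|_1\leq(C^2+1)\eps$ by $\mu$-preservation of $\theta_{\eta_n}$. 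Combining yields $\|\theta_{\eta_n}(F_0)\circ\vphi - \theta_{\eta_n}(F_0)\|_1 \leq (C^2+2)\eps + o(1)$ as $n\to\infty$, so $\|F\circ\vphi-F\|_1=0$ after letting $\eps\to 0$. The main obstacle is the Cauchy--Schwarz estimate above, which links the ``return probabilities'' $\rho_n$ to the strong recurrence condition \eqref{eq.strongly-conservative} via the cocycle identity for $S$.
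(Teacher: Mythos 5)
Your proposal is correct and follows essentially the same route as the paper: the same UCP maps $\theta_{\eta_n}$ from Lemma \ref{lem.ucp} built from a strongly recurrent sequence, the same key claim that the total mass placed on any fixed group element tends to $0$ in $L^1$ (your pointwise-plus-integral Cauchy--Schwarz combined with the cocycle identity for $S$ is a valid variant of the paper's derivation, yielding the same bound by the quantity in \eqref{eq.strongly-conservative}), and the same hybrid decomposition separating the change of weights from the change of values under $\vphi$. The only differences are cosmetic: the paper truncates the averaging operators to $h \notin \cF$ and controls the weight change via the bound $\|\Theta(H)\|_1 \leq C^2 \|H\|_1$, whereas you keep full normalized averages and exploit the exact cancellation of the constant value $F(x)$ between the two weight systems.
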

\begin{proof}
Fix a sequence of probability measures $\eta_n$ on $G$ satisfying \eqref{eq.strongly-conservative}. Denote
$$\xi_n : G \times X \recht [0,1] : \xi_n(h,x) = \sum_{g \in G} \eta_n(g) \; \frac{\dis\eta_n(gh^{-1}) \, \frac{d\mu(h \cdot x)}{d \mu(x)}}{\dis\sum_{k \in G} \eta_n(gk^{-1}) \, \frac{d\mu(k \cdot x)}{d \mu(x)}} \; .$$
By definition, $\sum_{h \in G} \xi_n(h,x) = 1$ for all $x \in X$ and $n \in \N$. By Lemma \ref{lem.ucp}, the maps
$$\theta_n : L^\infty(X) \recht L^\infty(X) : (\theta_n(H))(x) = \sum_{h \in G} \xi_n(h,x) \, H(h \cdot x)$$
satisfy $\|\theta_n(H)\|_1 \leq \|H\|_1$ for all $H \in L^\infty(X)$.

We claim that for every fixed $h \in G$, we have $\lim_n \|\xi_n(h,\cdot)\|_1 = 0$. To prove this claim, first note that by changing the variable $x$ to $g^{-1} \cdot x$, we get that
\begin{equation}\label{eq.hulp}
\begin{split}
\|\xi_n(h,\cdot)\|_1 &= \sum_{g \in G} \eta_n(g) \; \int_X \frac{\eta_n(gh^{-1}) \, \frac{d\mu(hg^{-1} \cdot x)}{d\mu(x)}}{\sum_{k \in G} \eta_n(k) \, \frac{d\mu(k^{-1} \cdot x)}{d \mu(x)}} \; \frac{d\mu(g^{-1} \cdot x)}{d\mu(x)} \; d\mu(x) \\[3ex]
&= \int_X \frac{\sum_{g \in G} \eta_n(g) \, \frac{d\mu(g^{-1} \cdot x)}{d\mu(x)} \, \eta_n(gh^{-1}) \, \frac{d\mu(hg^{-1} \cdot x)}{d\mu(x)}}{\sum_{k \in G} \eta_n(k) \, \frac{d\mu(k^{-1} \cdot x)}{d \mu(x)}} \; d\mu(x) \; .
\end{split}
\end{equation}
Using the Cauchy-Schwarz inequality, we conclude that
$$\|\xi_n(h,\cdot)\|_1 \leq \int \frac{\sum_{g \in G} \eta_n(g)^2 \, \Bigl(\frac{d\mu(g^{-1} \cdot x)}{d\mu(x)} \Bigr)^2}{\sum_{k \in G} \eta_n(k) \, \frac{d\mu(k^{-1} \cdot x)}{d \mu(x)}} \; d\mu(x) \; .$$
Applying \eqref{eq.hulp} to $h=e$, it follows that $\|\xi_n(h,\cdot)\|_1 \leq \|\xi_n(e,\cdot)\|_1$. Condition \eqref{eq.strongly-conservative} is precisely saying that $\|\xi_n(e,\cdot)\|_1 \recht 0$. So the claim is proven.

Now assume that $F : X \recht [0,1]$ is $G$-invariant. We have to prove that $F(\vphi(x)) = F(x)$ for a.e.\ $x \in X$. Choose $\eps > 0$. Take a function $F_0 : X \recht [0,1]$ satisfying \eqref{eq.property-F0} with $\|F-F_0\|_1 < \eps$. Then take a finite subset $\cF \subset G$ such that $|F_0(g \cdot \vphi(x))-F_0(g \cdot x)| < \eps$ for all $g \in G \setminus \cF$ and all $x \in X$. Using the claim above, fix $n$ large enough such that
\begin{equation}\label{eq.remainder}
\sum_{h \in \cF} \|\xi_n(h,\cdot)\|_1 < \eps \; .
\end{equation}
Denote
$$\Psi : L^\infty(X) \recht L^\infty(X) : (\Psi(H))(x) = \sum_{h \in G \setminus \cF} \xi_n(h,x) \, H(h \cdot x) \; .$$
It follows from \eqref{eq.remainder} that $\|\Psi(H) - \theta_n(H)\|_1 \leq \eps \, \|H\|_\infty$ for all $H \in L^\infty(X)$.

Since $F = \theta_n(F)$, we get that
$$\|F - \Psi(F_0)\|_1 \leq \|\theta_n(F) - \theta_n(F_0)\|_1 + \|\theta_n(F_0) - \Psi(F_0)\|_1 < 2 \eps \; .$$
Let $C > 0$ be the constant given by \eqref{eq.bound-C}. It follows that
\begin{equation}\label{eq.step1}
\|F \circ \vphi - \Psi(F_0) \circ \vphi\|_1 \leq C \, \|F - \Psi(F_0)\|_1 < 2C \, \eps \; .
\end{equation}
Note that
$$(\Psi(F_0))(\vphi(x)) = \sum_{h \in G \setminus \cF} \xi_n(h,\vphi(x)) \, F_0(h \cdot \vphi(x)) \; .$$
Define the map
$$\Theta : L^\infty(X) \recht L^\infty(X) : (\Theta(H))(x) = \sum_{h \in G \setminus \cF} \xi_n(h,\vphi(x)) \, H(h \cdot x) \; .$$
Since $|F_0(h \cdot \vphi(x)) - F_0(h \cdot x)| \leq \eps$ for all $h \in G \setminus \cF$ and all $x \in X$, we conclude that
\begin{equation}\label{eq.step2}
\|\Psi(F_0) \circ \vphi - \Theta(F_0)\|_1 \leq \eps \; .
\end{equation}
By \eqref{eq.bound-C}, we get that $\xi_n(h,\vphi(x)) \leq C^2 \, \xi_n(h,x)$ for all $h \in G$ and a.e.\ $x \in X$. It follows that for all $H \in L^\infty(X)$, and for a.e.\ $x \in X$,
$$|(\Theta(H))(x)| \leq C^2 \sum_{h \in G \setminus \cF} \xi_n(h,x) \, |H(h \cdot x)| \leq C^2 \, \theta_n(|H|)(x) \; .$$
So, $\|\Theta(H)\|_1 \leq C^2 \, \|H\|_1$ for all $H \in L^\infty(X)$. In particular, $\|\Theta(F_0) - \Theta(F)\|_1 < C^2 \, \eps$. In combination with \eqref{eq.step1} and \eqref{eq.step2}, it follows that $\|F \circ \vphi - \Theta(F)\|_1 < (C+1)^2 \, \eps$.

Since $F$ is $G$-invariant, we also find that
$$(\Theta(F))(x) - F(x) = F(x) \, \sum_{h \in \cF} \xi_n(h,\vphi(x)) \; .$$
Since $\|F\|_\infty \leq 1$, it follows that
$$\|\Theta(F) - F\|_1 \leq \sum_{h \in \cF} \|\xi_n(h,\vphi(\cdot))\|_1 \leq C^2 \sum_{h \in \cF} \|\xi_n(h,\cdot)\|_1 < C^2 \, \eps \; .$$
So we conclude that $\|F \circ \vphi - F\|_1 < 2(C+1)^2 \, \eps$. Since this holds for all $\eps > 0$, we get that $F(\vphi(x)) = F(x)$ for a.e.\ $x \in X$.
\end{proof}

\section{Ergodicity of Bernoulli actions of arbitrary groups}

Let $G$ be a countable group. Fix $0 < \delta < 1/2$ and fix probability measures $\mu_g$ on $\{0,1\}$ satisfying $\delta \leq \mu_g(0) \leq 1-\delta$ for every $g \in G$. Define $c_g(k) = \mu_k(0) - \mu_{g^{-1}k}(0)$. Assume that $c_g \in \ell^2(G)$ for every $g \in G$. Then the Bernoulli action
\begin{equation}\label{eq.Bernoulli}
G \actson (X,\mu) = \prod_{g \in G} (\{0,1\},\mu_g)
\end{equation}
is nonsingular.

We start by proving the second half of Theorem \ref{thm-A}.

\begin{theorem}\label{thm.general-ergodicity}
If the Bernoulli action $G \actson (X,\mu)$ in \eqref{eq.Bernoulli}, with $\mu_g(0) \in [\delta,1-\delta]$ for all $g \in G$, is strongly conservative, then it is weakly mixing.
\end{theorem}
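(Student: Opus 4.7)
The plan is to imitate the dichotomy argument behind Theorem \ref{thm.dichotomy-dissipative-weakly-mixing}, but replace the amenability input of Lemma \ref{lem.automatic-invariance-permutation-group} by the strong conservativeness input of Lemma \ref{lem.consequence-strong-conservative}. Fix an arbitrary ergodic pmp action $G \actson (Y,\eta)$, consider the diagonal action $G \actson (X \times Y, \mu \times \eta)$, and let $F \in L^\infty(X \times Y)$ be $G$-invariant; the goal is to show that $F$ is essentially constant.

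First, strong conservativeness passes to the diagonal action: since $\eta$ is $G$-invariant, the Radon-Nikodym cocycle of $G \actson X \times Y$ coincides with that of $G \actson X$, so the integrand in \eqref{eq.strongly-conservative} does not depend on $y$, and any strongly recurrent sequence $\eta_n$ for $G \actson (X,\mu)$ remains strongly recurrent for the diagonal action. Next, for each $\si \in \cS_G$ I would introduce the nonsingular automorphism $\vphi_\si(x,y) = (\si \cdot x, y)$ of $X \times Y$ and apply Lemma \ref{lem.consequence-strong-conservative} to $\vphi_\si$.

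Condition~(i) of the lemma is immediate: a direct computation gives
$$\frac{d\mu(g \si \cdot x)}{d\mu(g \cdot x)} = \prod_{h \in g \supp(\si)} \frac{\mu_h(x_{\si^{-1}g^{-1}h})}{\mu_h(x_{g^{-1}h})} \; ,$$
a finite product of at most $|\supp(\si)|$ factors each lying in $[\delta/(1-\delta),(1-\delta)/\delta]$, so the Radon-Nikodym ratio on $X \times Y$ stays in $[C^{-1},C]$ with $C = ((1-\delta)/\delta)^{|\supp(\si)|}$. For condition~(ii) I would use the $L^1$-dense family of functions $F_0 = H_1 \ot H_2$ with $H_1 \in L^\infty(X)$ depending only on coordinates in a finite set $\cF \subset G$ and $H_2 \in L^\infty(Y)$. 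When $g \supp(\si) \cap \cF = \emptyset$, which is true for all but finitely many $g \in G$, the finite tuples $(x_{g^{-1}h})_{h \in \cF}$ and $(x_{\si^{-1}g^{-1}h})_{h \in \cF}$ are equal, so that $F_0(g \cdot \vphi_\si(x,y)) = F_0(g \cdot (x,y))$ pointwise. Lemma \ref{lem.consequence-strong-conservative} then yields $F \circ \vphi_\si = F$ almost everywhere, so $F$ is invariant under the coordinate-permutation action of $\cS_G$ on the $X$-variable (with $Y$ fixed).

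Since $\min\{\mu_g(0),\mu_g(1)\} \geq \delta$ and $G$ is infinite, the nonatomicity criterion \eqref{eq.nonatomic-condition} holds, so by \cite[Theorem 1.1]{AP77} the action $\cS_G \actson (X,\mu)$ is ergodic. Therefore $F \in 1 \ot L^\infty(Y)$, and ergodicity of $G \actson (Y,\eta)$ forces $F$ to be essentially constant. This proves that $G \actson X \times Y$ is ergodic for every ergodic pmp $G$-space $Y$, i.e., $G \actson (X,\mu)$ is weakly mixing. The main conceptual hurdle is keeping straight that $g$ and $\si$ do not commute: the conjugated permutation $g\si g^{-1}$ has support $g \cdot \supp(\si)$, and it is precisely this translated finite support that simultaneously drives the uniform bound in hypothesis (i) and the eventual-vanishing in hypothesis (ii) of Lemma \ref{lem.consequence-strong-conservative}.
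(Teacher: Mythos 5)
Your proposal is correct and follows essentially the same route as the paper: both arguments pass to the strongly conservative diagonal action $G \actson X \times Y$ and apply Lemma \ref{lem.consequence-strong-conservative} to a family of automorphisms supported on finitely many coordinates, for which hypotheses (i) and (ii) are verified exactly as you do. The only (cosmetic) difference is that the paper uses the single-coordinate flips $\tau_s$, so that invariance under all of them immediately forces $F$ to be independent of the $X$-variable, whereas you use the finite permutations $\vphi_\si$ and then invoke the ergodicity of $\cS_G \actson (X,\mu)$ from \cite{AP77}; both conclusions are valid.
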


Combining Theorem \ref{thm.general-ergodicity} with Proposition \ref{prop.characterize-strong-conservative}, we get as a corollary another proof of \cite[Theorem 0.2]{D18}.

\begin{corollary}\label{cor.amenable}
If $G$ is amenable and if the Bernoulli action $G \actson (X,\mu)$ in \eqref{eq.Bernoulli}, with $\mu_g(0) \in [\delta,1-\delta]$ for all $g \in G$, is conservative, then it is weakly mixing.
\end{corollary}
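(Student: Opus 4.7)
My plan is to deduce this corollary as a direct two-step combination of Theorem \ref{thm.general-ergodicity} and the second point of Proposition \ref{prop.characterize-strong-conservative}. There is no new technical content to introduce; the whole content of the corollary is that under amenability, the hypothesis of Theorem \ref{thm.general-ergodicity} (strong conservativeness) is automatic whenever the action is merely conservative.

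Concretely, I would begin by assuming that $G$ is amenable and that the Bernoulli action $G \actson (X,\mu) = \prod_{g \in G}(\{0,1\},\mu_g)$ with $\mu_g(0) \in [\delta,1-\delta]$ is conservative. Since $G$ is amenable, I can fix a right F\o lner sequence $\cF_n \subset G$ and define $\eta_n$ to be the uniform probability measure on $\cF_n$. By Proposition \ref{prop.characterize-strong-conservative}(2), the sequence $\eta_n$ is then strongly recurrent with respect to $G \actson (X,\mu)$, which precisely says that $G \actson (X,\mu)$ is strongly conservative in the sense of Definition \ref{def.strong-conservative}.

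Having established strong conservativeness, I can invoke Theorem \ref{thm.general-ergodicity} verbatim, whose hypotheses (nonsingular Bernoulli action with marginals bounded away from $0$ and $1$, together with strong conservativeness) are now satisfied. This yields weak mixing, which is the desired conclusion.

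Since this is a pure application of existing results, there is essentially no obstacle to overcome; the only point worth double-checking is that Proposition \ref{prop.characterize-strong-conservative}(2) applies without any additional assumption on the action beyond conservativeness (which it does, as the proof there only uses amenability of $G$, the existence of a right F\o lner sequence, and conservativeness via the finite subset $\cL$ controlling $\sum_k d\mu(k\cdot x)/d\mu(x)^{-1}$). Consequently the proof can be written in essentially two sentences, and it simultaneously recovers \cite[Theorem 0.2]{D18} without ever appealing to the pointwise ergodic theorem used there.
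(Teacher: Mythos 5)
Your proposal is correct and is exactly the paper's argument: the corollary is stated there as an immediate combination of Proposition \ref{prop.characterize-strong-conservative} (point 2, which upgrades conservativeness to strong conservativeness for amenable $G$ via uniform measures on a right F{\o}lner sequence) with Theorem \ref{thm.general-ergodicity}. Nothing further is needed.
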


\begin{proof}[Proof of Theorem \ref{thm.general-ergodicity}]
Let $G \actson (Y,\eta)$ be any ergodic pmp action. Note that the diagonal action $G \actson (X \times Y,\mu \times \eta)$ stays strongly conservative. Denote by $\tau : \{0,1\} \recht \{0,1\} : \tau(i) = 1-i$ the map that exchanges $0$ and $1$. For every $s \in G$, define $\tau_s \in \Aut(X,\mu)$ given by changing the $s$'th coordinate, i.e.\ $(\tau_s(x))_k = x_k$ if $k \neq s$ and $(\tau_s(x))_s = \tau(x_s)$. When $\cF \subset G$ is a finite subset and $F_0 \in L^\infty(\{0,1\}^\cF \times Y) \subset L^\infty(X \times Y)$, we have that $F_0(g \cdot (\tau_s(x),y)) = F_0(g \cdot (x,y))$ for all $g \in G \setminus \cF s^{-1}$ and all $(x,y) \in X\times Y$. The transformation $\tau_s \times \id$ also satisfies \eqref{eq.bound-C} with $C = (1-\delta)/\delta$.

Let $F \in L^\infty(X \times Y)$ be a $G$-invariant function. It follows from Lemma \ref{lem.consequence-strong-conservative} that $F(\tau_s(x),y) = F(x,y)$ for all $s \in G$ and a.e.\ $(x,y) \in X \times Y$. This means that $F$ only depends on the second variable. But then, $F$ must be constant a.e., by the ergodicity of $G \actson (Y,\eta)$.
\end{proof}

Proposition 4.1 in \cite{VW17} provides a sufficient condition for the conservativeness of a Bernoulli action. We modify that argument to obtain the following criterion for strong conservativeness, thus implying ergodicity and weak mixing by Theorem \ref{thm.general-ergodicity}. For later use, we also prove that the Maharam extension stays strongly conservative.

For every $\al > 0$, we define the probability measure $\nu_\al$ on $\R$ given by
\begin{equation}\label{eq.nu-alpha}
d\nu_\al(t) = \frac{\al}{2} \, \exp(-\al |t|) \, dt \; .
\end{equation}

\begin{proposition}\label{prop.strong-conservative-Bernoulli}
Consider the Bernoulli action $G \actson (X,\mu)$ in \eqref{eq.Bernoulli} with $\mu_g(0) \in [\delta,1-\delta]$ for all $g \in G$. Assume that $\kappa > \delta^{-1} (1-\delta)^{-1}$ and that
$$\sum_{g \in G} \exp(- 4\kappa \, \|c_g\|_2^2) = +\infty \; .$$
Then the Bernoulli action $G \actson (X,\mu)$ is strongly conservative.

More precisely, whenever $\delta^{-1} (1-\delta)^{-1} < \kappa_1 < \kappa$, there exists a sequence of probability measures $\eta_n$ on $G$ satisfying
\begin{equation}\label{eq.eta-n-we-need}
\sum_{k,g \in G} \eta_n(g)^2 \, \eta_n(gk^{-1}) \, \exp\bigl(\kappa_1 \, \|c_k\|_2^2\bigr) \; \recht \; 0 \quad\text{as $n \recht \infty$.}
\end{equation}
Each sequence of probability measures $\eta_n$ satisfying \eqref{eq.eta-n-we-need} is strongly recurrent for $G \actson (X,\mu)$.

Also, for every $\kappa_1$ as above, there exists an $\al_0 > 0$ such that each sequence $\eta_n$ satisfying \eqref{eq.eta-n-we-need} is strongly recurrent for the Maharam extension $G \actson (X \times \R,\mu \times \nu_\al)$ for every $\al \in (0,\al_0)$.
\end{proposition}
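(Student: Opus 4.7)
The plan is to verify the more concrete condition \eqref{eq.est-2} of Proposition~\ref{prop.characterize-strong-conservative}(3), which asks for control of $\int_Y dy/d(k \cdot y) \, dy$ summed against the kernel $\sum_g \eta_n(g)^2 \eta_n(gk^{-1})$. First I would estimate this integral for the Bernoulli action, then construct a specific $\eta_n$ satisfying \eqref{eq.eta-n-we-need}, and finally extend the integral estimate to the Maharam extension with base measure $\mu \times \nu_\al$ for small $\al$.

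For the base action, the product structure of $\mu$ gives
\begin{equation*}
\int_X \frac{d\mu(x)}{d\mu(k \cdot x)}\, d\mu(x) = \prod_{h \in G}\Bigl(\frac{\mu_h(0)^2}{\mu_{kh}(0)} + \frac{\mu_h(1)^2}{\mu_{kh}(1)}\Bigr) = \prod_{h \in G}\Bigl(1 + \frac{(\mu_h(0) - \mu_{kh}(0))^2}{\mu_{kh}(0)\mu_{kh}(1)}\Bigr).
\end{equation*}
Since $\mu_g(0) \in [\delta, 1-\delta]$ forces $\mu_{kh}(0)\mu_{kh}(1) \geq \delta(1-\delta)$, applying $1 + x \leq e^x$ to each factor yields $\int_X d\mu(x)/d\mu(k \cdot x)\, d\mu(x) \leq \exp(\kappa_0 \|c_k\|_2^2)$ with $\kappa_0 = (\delta(1-\delta))^{-1}$. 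Since $\kappa_1 > \kappa_0$, any sequence satisfying \eqref{eq.eta-n-we-need} satisfies \eqref{eq.est-2} and is therefore strongly recurrent for $G \actson (X,\mu)$.

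To construct such $\eta_n$, I use the cocycle identity $c_{gh} = c_g + \pi_g c_h$ (with $\pi$ the left regular representation), which gives $c_{h^{-1}g} = \pi_{h^{-1}}(c_g - c_h)$ and hence $\|c_{h^{-1}g}\|_2 = \|c_g - c_h\|_2 \leq \|c_g\|_2 + \|c_h\|_2$. Set $N(T) = |\{g \in G : \|c_g\|_2^2 \leq T\}|$. By Abel summation, $\sum_g e^{-4\kappa \|c_g\|_2^2} = 4\kappa \int_0^\infty N(T) e^{-4\kappa T}\,dT$; if $N(T) e^{-4\kappa_1 T}$ were bounded, this integral would converge with decay $e^{-4(\kappa-\kappa_1)T}$, contradicting the divergence assumption. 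Hence $N(T) e^{-4\kappa_1 T}$ is unbounded, and I can pick $T_n$ and finite subsets $F_n \subset \{g : \|c_g\|_2^2 \leq T_n\}$ with $|F_n| e^{-4\kappa_1 T_n} \recht \infty$. Setting $\eta_n = |F_n|^{-1} 1_{F_n}$ and substituting $h = gk^{-1}$, so that $k = h^{-1}g$,
\begin{equation*}
\sum_{k, g \in G} \eta_n(g)^2 \eta_n(gk^{-1}) e^{\kappa_1 \|c_k\|_2^2} = \frac{1}{|F_n|^3}\sum_{g, h \in F_n} e^{\kappa_1 \|c_g - c_h\|_2^2} \leq \frac{e^{4\kappa_1 T_n}}{|F_n|} \recht 0 .
\end{equation*}

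For the Maharam extension, the reverse triangle inequality $|t+\be|-|t| \leq |\be|$ gives $\int_\R d\nu_\al(t)/d\nu_\al(t+\be)\,d\nu_\al(t) \leq e^{\al|\be|}$; applying this with $\be = \log D(k,x)$, where $D(k,x) = d\mu(k \cdot x)/d\mu(x)$, and using Fubini,
\begin{equation*}
\int_{X \times \R} \frac{d(\mu \times \nu_\al)(x,t)}{d(\mu \times \nu_\al)(k \cdot (x,t))}\,d(\mu \times \nu_\al)(x,t) \leq \int_X \bigl(D(k,x)^{\al-1} + D(k,x)^{-\al-1}\bigr)\,d\mu(x).
\end{equation*}
Each integral on the right again factors over $G$; a routine Taylor expansion of the generic factor $\mu_h(0)^{2\mp\al}\mu_{kh}(0)^{\pm\al-1} + \mu_h(1)^{2\mp\al}\mu_{kh}(1)^{\pm\al-1}$ around the diagonal $\mu_{kh} = \mu_h$ (where it equals $1$) shows it is bounded by $\exp(\kappa_0(1+O(\al))\,(\mu_h(0)-\mu_{kh}(0))^2)$. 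Choosing $\al_0 > 0$ so that $\kappa_0(1+O(\al)) < \kappa_1$ for all $\al \in (0,\al_0)$, the full integral is at most $2\, e^{\kappa_1 \|c_k\|_2^2}$, and Proposition~\ref{prop.characterize-strong-conservative}(3) again reduces strong recurrence for the Maharam extension to \eqref{eq.eta-n-we-need}. The main technical subtlety is ensuring that the $O(\al)$ perturbation in this Taylor bound is uniform in $h$ and $k$; this follows from the confinement of the pair $(\mu_h(0),\mu_{kh}(0))$ to the compact set $[\delta,1-\delta]^2$, on which the remainder term can be controlled explicitly.
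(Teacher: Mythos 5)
Your proposal is correct and follows essentially the same route as the paper: the bound $\int_X \frac{d\mu(x)}{d\mu(k\cdot x)}\,d\mu(x) \leq \exp(\kappa_0\|c_k\|_2^2)$ combined with criterion \eqref{eq.est-2} of Proposition \ref{prop.characterize-strong-conservative}, a uniform measure on sublevel sets of $g \mapsto \|c_g\|_2^2$ together with the cocycle identity to verify \eqref{eq.eta-n-we-need}, and the pointwise bound by $D^{\al-1}+D^{-\al-1}$ plus a uniform second-order Taylor expansion for the Maharam extension. The only real difference is that you re-derive, via the counting function $N(T)$ and Abel summation, the existence of the sets $F_n$ that the paper imports from \cite[(4.2)]{VW17}; your derivation is correct (and your identity $\|c_{h^{-1}g}\|_2 = \|c_g-c_h\|_2$ is a slightly cleaner form of the paper's estimate), though strictly speaking the infinite-product identities for $\int_X \frac{d\mu(x)}{d\mu(k\cdot x)}\,d\mu(x)$ should be stated as inequalities via Fatou's lemma, as in the paper.
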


Note that when $\delta$ is close to $0$, then the bound in Proposition \ref{prop.strong-conservative-Bernoulli}, guaranteeing strong conservativeness and ergodicity, is sharper than the conservativeness bound in \cite[Proposition 4.1]{VW17}.

\begin{proof}
Fix $\delta^{-1} (1-\delta)^{-1} < \kappa_1 < \kappa$. Choose $\kappa_2$ such that $\kappa_1 < \kappa_2 < \kappa$. By (4.2) in \cite{VW17}, we can fix an increasing sequence $s_n \in (0,+\infty)$ tending to infinity and finite subsets $\cF_n \subset G$ such that for all $n$, we have that
$$|\cF_n| \geq \exp(4\kappa_2 s_n) \quad\text{and}\quad \|c_k\|_2^2 \leq s_n \;\;\text{for all $k \in \cF_n$.}$$
Define $\eta_n$ as the uniform probability measure on $\cF_n$. We prove that the sequence $\eta_n$ satisfies \eqref{eq.eta-n-we-need}.

When $k \in \cF_n^{-1} \cF_n$, writing $k = h^{-1} g$ with $g,h \in \cF_n$, we have $c_k = c_{h^{-1}} + \lambda_h^* c_g$, so that
$$\|c_k\|_2^2 \leq 2(\|c_h\|_2^2 + \|c_g\|_2^2) \leq 4 s_n \; .$$
It then follows that
\begin{align*}
\sum_{k,g \in G} \eta_n(g)^2 \, \eta_n(gk^{-1}) \, \exp\bigl(\kappa_1 \, \|c_k\|_2^2\bigr) &\leq \sum_{k,g \in G} \eta_n(g)^2 \, \eta_n(gk^{-1}) \, \exp(4 \kappa_1 s_n) \\
&= \frac{1}{|\cF_n|} \, \exp(4 \kappa_1 s_n) \leq \exp(4 (\kappa_1 - \kappa_2) s_n) \recht 0 \; .
\end{align*}

Next assume that $\eta_n$ is any sequence of probability measures satisfying \eqref{eq.eta-n-we-need}. We first prove that $\eta_n$ is strongly recurrent for $G \actson (X,\mu)$.

With convergence a.e.\ we have that
\begin{equation}\label{eq.RN}
\frac{d \mu(k \cdot x)}{d\mu(x)} = \prod_{h \in G} \frac{\mu_{kh}(x_h)}{\mu_h(x_h)} \; .
\end{equation}
Write $\kappa_0 = \delta^{-1}(1-\delta)^{-1}$. Whenever $\delta \leq a,b \leq 1-\delta$, we have that
$$\frac{a^2}{b} + \frac{(1-a)^2}{1-b} = 1 + \frac{(a-b)^2}{b(1-b)} \leq 1 + \kappa_0 \, (a-b)^2 \; .$$
Define $\gamma : G \recht [\delta,1-\delta] : \gamma(g) = \mu_g(0)$. Let $G = \{h_1,h_2,\ldots\}$. With a similar computation as in the proof of \cite[Proposition 4.1]{VW17}, by \eqref{eq.RN} and Fatou's lemma, we have
\begin{align}
\int_X \frac{d\mu(x)}{d\mu(k \cdot x)} \, d\mu(x) & \leq \liminf_{n} \prod_{i=1}^n \Bigl( \frac{\gamma(h_i)^2}{\gamma(kh_i)} + \frac{(1-\gamma(h_i))^2}{1-\gamma(kh_i)}\Bigr) \notag\\
& \leq \liminf_{n}  \prod_{i=1}^n \Bigl( 1 + \kappa_0 (\gamma(kh_i) - \gamma(h_i))^2 \Bigr) \notag\\
& \leq \liminf_n \exp\Bigl( \kappa_0 \sum_{i=1}^n (\gamma(kh_i) - \gamma(h_i))^2 \Bigr) = \exp(\kappa_0 \|c_k\|_2^2) \; .\label{eq.estimate-integral}
\end{align}
From \eqref{eq.estimate-integral} and \eqref{eq.eta-n-we-need}, it follows that the sequence $\eta_n$ satisfies \eqref{eq.est-2}. By Proposition \ref{prop.characterize-strong-conservative}, the sequence $\eta_n$ is strongly recurrent.

Finally, having fixed $\kappa_1$ with $\kappa_0 < \kappa_1 < \kappa$, we construct an $\al_0 > 0$ such that any sequence of probability measures satisfying \eqref{eq.eta-n-we-need} is strongly recurrent for the Maharam extension $G \actson (X \times \R,\mu \times \nu_\al)$, for all $\al \in (0,\al_0)$.

First note that
$$\exp(-\al |a|) \leq \frac{d\nu_\al(t+a)}{d\nu_\al(t)} \leq \exp(\al |a|) \quad\text{for all $a,t \in \R$.}$$
Writing $\mu_\al = \mu \times \nu_\al$, it follows that
$$\frac{d\mu_\al(k \cdot (x,t))}{d\mu_\al(x,t)} \quad\text{lies between}\;\; \Bigl(\frac{d \mu(k \cdot x)}{d\mu(x)}\Bigr)^{1 \pm \al} \; ,$$
so that
\begin{equation}\label{eq.this-helps}
\frac{d\mu_\al(x,t)}{d\mu_\al(k \cdot (x,t))} \leq \Bigl(\frac{d\mu(x)}{d \mu(k \cdot x)}\Bigr)^{1 + \al} + \Bigl(\frac{d\mu(x)}{d \mu(k \cdot x)}\Bigr)^{1 - \al}
\end{equation}
for all $(x,t) \in X \times \R$ and $k \in G$.

Using a second order Taylor expansion, we get that for every $\beta > 0$ and $a,b \in [\delta,1-\delta]$, there exist $c$ and $d$ lying between $a$ and $b$ such that
\begin{align*}
\frac{a^{\beta + 1}}{b^\beta} + \frac{(1-a)^{\beta +1}}{(1-b)^\beta} = \; & 1 + (b-a)^2 \, \eta_\beta(b,c,d) \\ &\text{with}\;\;\eta_\beta(b,c,d)= \frac{\beta (\beta +1)}{2} \, \frac{c^{\beta -1} (1-b)^\beta + (1-d)^{\beta-1} b^\beta}{b^\beta \, (1-b)^\beta} \; .
\end{align*}
Defining
$$\eta_\beta = \max \{\eta_\beta(b,c,d) \mid b,c,d \in [\delta,1-\delta]\} \, $$
we conclude that
\begin{equation}\label{eq.this-also-helps}
\frac{a^{\beta + 1}}{b^\beta} + \frac{(1-a)^{\beta +1}}{(1-b)^\beta} \leq 1 + \eta_\beta \, (b-a)^2 \quad\text{for all}\;\; a,b \in [\delta,1-\delta]
\end{equation}
and $\eta_\beta \recht \kappa_0 = \delta^{-1}(1-\delta)^{-1}$ when $\beta \recht 1$.

Take $\al_0>0$ small enough such that $\eta_{1 \pm \al} \leq \kappa_1$ for all $\al \in (0,\al_0)$. Combining \eqref{eq.this-helps} and \eqref{eq.this-also-helps}, and making a similar computation as in \eqref{eq.estimate-integral}, we find that
$$\int_X \frac{d\mu_\al(x,t)}{d\mu_\al(k \cdot (x,t))} \, d\mu_\al(x,t) \leq 2\exp(\kappa_1 \, \|c_k\|_2^2)$$
for all $\al \in (0,\al_0)$ and all $k \in G$. In combination with \eqref{eq.eta-n-we-need}, it again follows that the sequence $\eta_n$ satisfies \eqref{eq.est-2}, so that $\eta_n$ is strongly recurrent for the Maharam extension $G \actson (X\times \R,\mu_\al)$.
\end{proof}

\section{The type of Bernoulli actions of arbitrary groups}

Recall that a subset $W \subset G$ of a group $G$ is called \emph{almost invariant} if $|gW \vartriangle W| < \infty$ for every $g \in G$. A group $G$ is said to have \emph{more than one end} if $G$ admits an almost invariant subset $W \subset G$ such that both $W$ and $G \setminus W$ are infinite. By the version of Stallings' theorem for possibly infinitely generated groups (see \cite[Theorem IV.6.10]{DD89} and the discussion in Remark \ref{rem.ends-groups}), the groups with more than one end can be exactly described.

\begin{theorem}\label{thm.main-general}
Let $G$ be a countable infinite group. If $H^1(G,\ell^2(G)) \neq \{0\}$, then $G$ admits a nonsingular Bernoulli action of stable type III$_1$.

If $G$ has more than one end and $G$ is not virtually cyclic, then $G$ admits nonsingular Bernoulli actions of stable type III$_\lambda$ for each $\lambda \in (0,1]$ close enough to $1$.
\end{theorem}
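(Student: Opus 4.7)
The plan is to handle the two assertions by essentially separate constructions, in each case reducing to the situation not already covered earlier in the paper.

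For the first assertion, the amenable case is already settled in \cite{BK18}, so the new content is non-amenable $G$ with positive first $L^2$-Betti number. Nontriviality of $H^1(G, \ell^2(G))$ (taking a real-valued representative) yields a function $b : G \to \R$ whose shifts $h \mapsto b(h) - b(g^{-1}h)$ lie in $\ell^2(G)$ for every $g \in G$, while $b$ itself is not constant-plus-$\ell^2$. I would then build $\mu_g(0) = \tfrac12 + \eps \chi(b(g))$ using a bounded monotone Lipschitz $\chi$ and a small scaling $\eps > 0$, so that $\mu_g(0) \in [\delta, 1-\delta]$ and $\|c_g\|_2 \le \eps \, \mathrm{Lip}(\chi) \, \|(\partial b)_g\|_2$. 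For $\eps$ small, Proposition \ref{prop.strong-conservative-Bernoulli} yields strong conservativeness and Theorem \ref{thm.general-ergodicity} yields weak mixing. Because $b$ is not $\ell^2$-close to any constant and $\chi$ is chosen nondegenerate, $\sum_g (\mu_g(0) - \lambda)^2 = +\infty$ for every $\lambda$. When $G$ has one end, Theorem \ref{thm.type-bernoulli} directly gives stable type III$_1$. When $G$ has more than one end, I would additionally choose $\chi$ so that the limit set $\cL$ of $\mu_g(0)$ is a perfect subset of $(0,1)$, invoke Proposition \ref{prop.III-1-permutation-action}(2) to force the ratio set of the permutation action $\cS_G \actson (X,\mu)$ to be $[0, +\infty)$, and transfer this invariance to the $G$-action by applying Lemma \ref{lem.consequence-strong-conservative} to the single-coordinate flips $\tau_s$.

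For the second assertion, if $G$ is locally finite then Proposition \ref{prop.locally-finite-type-II-infty} already realises every type III$_\lambda$ with $\lambda \in [0,1]$. Otherwise, by Stallings' theorem $G$ admits an almost invariant subset $W \subset G$ with both $W$ and $G \setminus W$ infinite. Fix $p, q \in (\delta, 1-\delta)$ and set $\mu_g(0) = p$ on $W$, $\mu_g(0) = q$ on $G \setminus W$. Each cocycle $c_g$ has support contained in the finite set $gW \vartriangle W$, so the action is nonsingular and $\|c_g\|_2$ is uniformly bounded in $g$, whence Proposition \ref{prop.strong-conservative-Bernoulli} gives strong conservativeness and Theorem \ref{thm.general-ergodicity} gives weak mixing. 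Every Radon-Nikodym derivative lies in the cyclic subgroup of $\R^*_+$ generated by $\lambda := p(1-q)/((1-p)q)$, and a direct ratio-set calculation (selecting a $g \in G$ such that $gW \vartriangle W$ consists of exactly two elements, one on each side) shows $\lambda$ belongs to the ratio set. Varying $p$ toward $q$, the parameter $\lambda$ ranges over an interval of the form $(1-\eta, 1]$, giving stable type III$_\lambda$ for each such $\lambda$ (with III$_1$ recovered by taking any $\lambda$ with $\log \lambda / \log \lambda'$ irrational alongside a second almost invariant partition).

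The main obstacle is the multi-ended subcase of the first assertion for non-amenable $G$: there Theorem \ref{thm.type-bernoulli} is unavailable, and one must genuinely use the transfer framework of Section \ref{sec.strongly-conservative}. The delicate task is to choose $b$ with slow enough growth of $\|(\partial b)_g\|_2$ that Proposition \ref{prop.strong-conservative-Bernoulli} applies, together with $\chi$ producing a perfect limit set $\cL$, so that the two hypotheses of Lemma \ref{lem.consequence-strong-conservative}—bounded Radon-Nikodym ratios uniformly in $g$, and uniform-in-$x$ asymptotic $\tau_s$-invariance of bounded cylinder functions—hold simultaneously. Balancing these demands (especially against the exponential growth of $G$) is the technical heart of the argument.
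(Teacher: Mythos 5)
Your overall skeleton (scale a harmonic cocycle for the first assertion, use an almost invariant set for the second, and feed everything into the type classification) matches the paper's strategy, but three steps are genuinely broken. First, in the nonamenable case you assert that Proposition \ref{prop.strong-conservative-Bernoulli} applies ``for $\eps$ small'' and propose to secure this by choosing $b$ with slow growth of $\|(\partial b)_g\|_2$. For a nonamenable group you cannot prescribe the growth of a nontrivial $\ell^2$-cocycle, and smallness of $\eps$ alone does not give $\sum_g \exp(-4\kappa\,\eps^2\|(\partial b)_g\|_2^2) = +\infty$. The missing idea, which is the actual engine of the paper's proof of Theorem \ref{thm.other-version-main-thm}, is Schoenberg's theorem: $g \mapsto \exp(-t\|c_g\|_2^2)$ is positive definite and tends to $1$ pointwise as $t \recht 0$, so by nonamenability it cannot lie in $\ell^2(G)$ for small $t$, which is exactly the required divergence. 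Second, your belief that Theorem \ref{thm.type-bernoulli} is ``unavailable'' when $G$ has more than one end is mistaken --- that theorem has no one-endedness hypothesis, and the paper applies it to multi-ended nonamenable groups by partitioning $G$ into three almost invariant sets and choosing the $\delta_i$ so that the group $\Lambda$ of Remark \ref{rem.stable-type} is dense. Your substitute transfer argument would not work: to control $G$-invariant functions on the Maharam extension you must apply the machinery to $X \times \R$, and the uniform asymptotic invariance hypothesis \eqref{eq.property-F0} of Lemma \ref{lem.consequence-strong-conservative}, applied to the Maharam lift of a coordinate flip $\tau_s$, would force $\mu_{gs}(0)$ to converge as $g \recht \infty$ --- incompatible with the perfect limit set you arrange. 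Handling a non-convergent sequence $\mu_g(0)$ on the Maharam extension is precisely the content of Lemma \ref{lem.general-automatic-invariance-permutation-group}, whose proof needs weak$^*$ limits of the measures $\rho_i(s,x)$ and the Choquet--Deny theorem, not Lemma \ref{lem.consequence-strong-conservative} alone.

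Third, in the second assertion your claim that $\|c_g\|_2$ is uniformly bounded in $g$ is false. For $W$ almost invariant one has $\|c_g\|_2^2 = (p-q)^2\,|W \vartriangle gW|$, and for the relevant groups (amalgamated free products $A *_C B$ and HNN extensions over a finite $C$) this grows linearly in the syllable length: the paper computes $|W \vartriangle gW| = 2n|C|$. This is exactly why the conclusion is restricted to $\lambda$ close enough to $1$: one needs the counting estimates of Proposition \ref{prop.good-almost-invariant} to verify $\sum_g \exp(-\kappa\,|W \vartriangle gW|) = +\infty$, and the admissible $\lambda$ are those satisfying \eqref{eq.condition-lambda}. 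With your (false) uniform bound every $\lambda \in (0,1)$ would be achievable, which is not what the theorem asserts. You also omit the condition $|W \setminus gW| = |gW \setminus W|$ from \eqref{eq.properties}; without it the homomorphism $\Om$ of Remark \ref{rem.stable-type} need not land in $a\Z$ and the type need not be III$_\lambda$ for the intended $\lambda$. The locally finite reduction to Proposition \ref{prop.locally-finite-type-II-infty} and the citation of \cite{BK18} for amenable groups are fine.
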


We prove Theorem \ref{thm.main-general} below (see Theorem \ref{thm.other-version-main-thm} and the discussion preceding it). By \cite[Theorem 3.1]{VW17}, also the converse of the first statement holds. So we immediately get the following corollary, stated as Theorem \ref{thm-D} above.

\begin{corollary}\label{cor.solution-VW-conjecture}
A countable infinite group $G$ admits a nonsingular Bernoulli action of type III$_1$ if and only if $H^1(G,\ell^2(G)) \neq \{0\}$.
\end{corollary}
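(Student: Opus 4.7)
The plan is to deduce this corollary as a direct combination of two ingredients: the forward direction is furnished by Theorem \ref{thm.main-general}, which is proved earlier in the paper, and the converse is furnished by \cite[Theorem 3.1]{VW17}. So the task reduces to verifying that the two statements line up cleanly, and to recording the nonsingular/Kakutani setup that links Bernoulli actions to the cohomology group $H^1(G,\ell^2(G))$.

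For the \emph{if} direction, assuming $H^1(G,\ell^2(G)) \neq \{0\}$, I would simply invoke the first statement of Theorem \ref{thm.main-general}, which produces a nonsingular Bernoulli action $G \actson (X,\mu) = \prod_{g \in G}(\{0,1\},\mu_g)$ of stable type III$_1$, hence a fortiori of type III$_1$. Nothing more is needed.

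For the \emph{only if} direction, suppose $G$ admits a nonsingular Bernoulli action $G \actson (X,\mu) = \prod_{g \in G}(\{0,1\},\mu_g)$ of type III$_1$. Passing to a suitable identification (replacing some coordinates via the map $0 \leftrightarrow 1$ where needed), one can arrange that the function $g \mapsto \mu_g(0)$ is well behaved at the points where it approaches $0$ or $1$. The Kakutani criterion \eqref{eq.criterion-nonsingular} then encodes the nonsingularity of the left Bernoulli shift as the $\ell^2$-summability, for every $g \in G$, of the differences $\sqrt{\mu_{gh}(i)} - \sqrt{\mu_h(i)}$, producing a natural $1$-cocycle for $G$ into the left regular representation. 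The content of \cite[Theorem 3.1]{VW17} is precisely that such a cocycle cannot be a coboundary when the Bernoulli action is of type III (the argument there is to show that triviality of the cocycle forces the existence of an equivalent $\sigma$-finite invariant measure, contradicting type III); thus $H^1(G,\ell^2(G)) \neq \{0\}$.

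Since Theorem \ref{thm.main-general} is the main new input and \cite[Theorem 3.1]{VW17} is a known black box, there is no genuine obstacle at this stage~-- the corollary is a bookkeeping consequence, and the substantive work is entirely inside Theorem \ref{thm.main-general} (and more generally in Theorems \ref{thm-A}--\ref{thm-C}).
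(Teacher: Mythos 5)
Your proposal is correct and matches the paper's own deduction exactly: the \emph{if} direction is the first statement of Theorem \ref{thm.main-general}, and the \emph{only if} direction is delegated to \cite[Theorem 3.1]{VW17}, just as the paper does in the sentence preceding the corollary. The additional gloss you give on the content of \cite[Theorem 3.1]{VW17} is harmless but not needed, since that result is used as a black box here as well.
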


For nonsingular Bernoulli actions satisfying the conservativeness criterion in Proposition \ref{prop.strong-conservative-Bernoulli}, by Theorem \ref{thm.type-bernoulli} below, also the converse of the second statement in Theorem \ref{thm.main-general} holds: if such a Bernoulli action is of type III$_\lambda$ for some $\lambda \in (0,1)$, then $G$ must have more than one end.

We derive Theorem \ref{thm.main-general} from a general result determining the (stable) type of an arbitrary nonsingular Bernoulli action $G \actson (X,\mu)$, provided that $\mu_g(0) \in [\delta,1-\delta]$ for all $g \in G$ and provided that the conservativeness criterion in Proposition \ref{prop.strong-conservative-Bernoulli} holds.

When $W \subset G$ is an almost invariant subset, $c_W : g \mapsto 1_W - 1_{gW}$ is a $1$-cocycle with values in $\ell^1(G) \subset \ell^2(G)$. The cocycle $c_W$ is a coboundary if and only if either $W$ or its complement $G \setminus W$ is finite. Denote by $\Zai(G,\ell^2(G)) \subset Z^1(G,\ell^2(G))$ the linear span of all $1$-cocycles $c_W$ associated with almost invariant subsets $W \subset G$. Under the above hypotheses, we prove in the following theorem that $G \actson (X,\mu)$ is of stable type III$_1$, unless the associated $1$-cocycle $c$ is inner (in which case we obviously get a measure preserving Bernoulli action) or the group $G$ has more than one end and $c$ is cohomologous to a $1$-cocycle in $\Zai(G,\ell^2(G))$ (in which case, type III$_\lambda$ with $\lambda \in (0,1)$ is possible).

\begin{theorem}\label{thm.type-bernoulli}
Let $G \actson (X,\mu) = \prod_{g \in G} (\{0,1\},\mu_g)$ be a nonsingular Bernoulli action with $\mu_g(0) \in [\delta,1-\delta]$ for all $g \in G$ and with $1$-cocycle $c_g(k) = \mu_k(0) -\mu_{g^{-1}k}(0)$. Assume that at least one of the following conditions hold.
\begin{itemlist}
\item $G$ is amenable and $G \actson (X,\mu)$ is conservative.
\item We have $\dis\sum_{g \in G} \exp(- 4\kappa \, \|c_g\|_2^2) = +\infty \quad\text{for some $\kappa > \delta^{-1} (1-\delta)^{-1}$.}$
\end{itemlist}
Then the following holds.
\begin{enumlist}
\item If $c$ is a coboundary, then $G \actson (X,\mu)$ is of stable type II$_1$ and $\mu \sim \nu^G$ for some probability measure $\nu$ on $\{0,1\}$.

\item If $c$ is not a coboundary, but cohomologous to a $1$-cocycle in $\Zai(G,\ell^2(G))$, then $G \actson (X,\mu)$ is of type III$_\lambda$ for some $\lambda \in (0,1]$ and the precise (stable) type is given in Remark \ref{rem.stable-type}.

\item If $c$ is not cohomologous to a $1$-cocycle in $\Zai(G,\ell^2(G))$, then $G \actson (X,\mu)$ is of stable type III$_1$.
\end{enumlist}
\end{theorem}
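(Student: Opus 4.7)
The strategy is to propagate invariance from $G$ to the permutation group $\cS_G$, and then read off the type from the theory of permutation actions developed in Section \ref{sec.permutation-action}.

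\textbf{Step 1 (automatic invariance).} I would couple the Bernoulli action with an auxiliary ergodic pmp action $G \actson (Y,\eta)$ (to capture the stable type) and work on $X \times \R \times Y$ equipped with the product of the Maharam extension of $G \actson (X,\mu)$ and $G \actson (Y,\eta)$. By Proposition \ref{prop.strong-conservative-Bernoulli}, one can fix $\al > 0$ small enough that a sequence of probability measures $\eta_n$ on $G$ is strongly recurrent for the finite-measure version $G \actson (X \times \R, \mu \times \nu_\al)$ of the Maharam extension, and tensoring with $(Y,\eta)$ preserves strong recurrence. I would then establish a Maharam variant of Lemma \ref{lem.consequence-strong-conservative}: for each limit value $\lambda \in (0,1)$ of the sequence $\mu_g(0)$ and each $s \in G$, apply the lemma to the twisted flip
\[
\varphi_s^{\rho}(x,t,y) = \bigl(\tau_s(x),\, t + \gamma_\rho(\tau_s(x),x),\, y\bigr), \qquad \rho = \log((1-\lambda)/\lambda),
\]
with $\gamma_\rho$ the modified cocycle from \eqref{eq.modified-1-cocycle}. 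The Radon--Nikodym bound \eqref{eq.bound-C} is automatic since $\mu_g(0) \in [\delta,1-\delta]$; the uniform-continuity condition \eqref{eq.property-F0} requires restricting the $L^1$-approximation step to those $g$ for which $\mu_{gs}(0)$ is close to $\lambda$, so that $\gamma_\rho(\tau_{gs}(gx),gx) \recht 0$. The conclusion is that every $G$-invariant $F \in L^\infty(X \times \R \times Y)$ satisfies $F \circ \varphi_s^\rho = F$ for every $s \in G$ and every $\rho$ arising from a limit value of $\mu_g(0)$, which is the same kind of identity as \eqref{eq.gamma-rho-invariance} used in the proof of Proposition \ref{prop.III-1-permutation-action}.

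\textbf{Step 2 (case analysis).} Case~1 is immediate: a coboundary forces $\mu_g(0) = \lambda + f(g)$ with $f \in \ell^2(G)$, so $\mu \sim \nu^G$ with $\nu(0)=\lambda$, and the action is the standard weakly mixing pmp Bernoulli action, of stable type II$_1$. For the remaining cases I would use the elementary translation that $c$ is cohomologous to an element of $\Zai(G,\ell^2(G))$ precisely when $g \mapsto \mu_g(0)$ admits a decomposition $\sum_{i=1}^n \lambda_i \, 1_{W_i} + f$ with $W_i \subset G$ almost invariant, $\lambda_i \in (0,1)$, and $f \in \ell^2(G)$. In Case~2 (such a nontrivial decomposition exists), Step~1 combined with Proposition \ref{prop.III-1-permutation-action}(2) places each ratio $\lambda_i(1-\lambda_j)/((1-\lambda_i)\lambda_j)$ in the ratio set of $G \actson (X,\mu)$, while the same invariance caps the ratio set at the subgroup generated by these finitely many values, yielding the precise type III$_\lambda$ described in Remark~\ref{rem.stable-type}. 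In Case~3 (no such decomposition), I would argue that the essential range of $g \mapsto \mu_g(0)$, restricted to every infinite almost invariant subset of $G$, either contains two distinct cluster points in $(0,1)$ or satisfies the nonsummability hypothesis of Proposition \ref{prop.III-1-permutation-action}(1); either alternative supplies a dense family of elements in the ratio set of $\cS_G \actson (X,\mu)$, so that its Maharam extension is ergodic. Combined with Step~1 and ergodicity of $G \actson (Y,\eta)$, this forces $F$ to be essentially constant, giving stable type III$_1$.

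\textbf{Main obstacle.} The decisive difficulty lies in Step~1. A direct application of Lemma \ref{lem.consequence-strong-conservative} to the naive Maharam lift of $\tau_s$ is doomed because the $\R$-shift $\beta(\tau_{gs},gx) = \log(\mu_{gs}(1-x_s)/\mu_{gs}(x_s))$ does not decay as $g \recht \infty$; the twisted cocycle $\gamma_\rho$ is introduced precisely to annihilate this shift in the limit, but only by handling each candidate limit value $\lambda$ separately and then carefully reassembling the resulting partial invariances. A secondary, combinatorial obstacle is the rigidity statement behind Case~3: one must exclude the possibility that $c$ lies outside $\Zai + (\text{coboundaries})$ while still producing a proper closed subgroup of $(0,+\infty)$ as ratio set. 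I expect to handle this by a maximal-decomposition argument over the family of almost invariant subsets of $G$, together with the type~III criterion of \cite{SV77}.
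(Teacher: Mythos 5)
Your overall architecture (propagate invariance from $G$ to $\cS_G$, then read off the type from Proposition \ref{prop.III-1-permutation-action} and \cite{SV77}) matches the paper, and your Step 2 case analysis is essentially the paper's, up to the fact that the paper organizes Case 3 around the set $\cL$ of limit values of $\mu_g(0)$ ($\cL$ infinite, versus $\cL$ finite with $\gamma-\zeta\notin\ell^2(G)$) rather than a maximal decomposition over almost invariant subsets. The genuine gap is in Step 1, and you have correctly located it but not closed it. Your plan is to apply a Maharam variant of Lemma \ref{lem.consequence-strong-conservative} to the twisted flip $\varphi_s^\rho$, one candidate $\rho$ per limit value $\lambda$ of $\mu_g(0)$, ``restricting the $L^1$-approximation step to those $g$ for which $\mu_{gs}(0)$ is close to $\lambda$.'' But Lemma \ref{lem.consequence-strong-conservative} works because the weights $\xi_n(h,x)$ coming from a strongly recurrent sequence sum to $1$ over all of $G$ and the error terms are controlled uniformly over $G\setminus\cF$; if you throw away the $g$ with $\mu_{gs}(0)$ far from $\lambda$, you need the strongly recurrent measures $\eta_n$ to place asymptotically full (or at least uniformly positive) mass on the retained set, and nothing in the hypotheses guarantees this for a prescribed limit value $\lambda$ --- the $\eta_n$ produced by Proposition \ref{prop.strong-conservative-Bernoulli} may concentrate entirely where $\mu_{gs}(0)$ clusters near a different limit value. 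So the partial invariance $F\circ\varphi_s^\rho=F$ is not established for any particular $\lambda$, and the subsequent ``reassembly'' has nothing to reassemble.

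The paper's proof of Lemma \ref{lem.general-automatic-invariance-permutation-group} avoids fixing $\rho$ in advance. It lets the dynamics produce, for each $s$ and a.e.\ $x$, a sequence of probability measures $\rho_i(s,x)=\sum_k p_i(k,\tau_s(x))\,\delta(R_s(x_s)-R_{ks}(x_s))$ on a compact interval, proves the convolution identities $\rho_0 * F(x,\cdot,y)=F(\tau_s(x),\cdot,y)$ and $\rho_1*F(\tau_s(x),\cdot,y)=F(x,\cdot,y)$ for any weak$^*$ limit points, and then invokes the Choquet--Deny theorem together with the aperiodicity of $t\mapsto F(x,t,y)$ to conclude that every limit point is a single Dirac mass. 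Two ingredients of this mechanism are entirely missing from your proposal: (i) the periodicity trichotomy of Lemma \ref{lem.technical-periods}, without which the Choquet--Deny step fails and the periodic case must be handled on $\R/p\Z$; and (ii) the argument that the resulting shift $\zeta_s$ is independent of $s$, which in the nonamenable case requires constructing strongly recurrent sequences with $\|s\cdot\eta_n-\eta_n\|_1\leq 2/3$ (the averaging $(\eta_n+s\cdot\eta_n+s^2\cdot\eta_n)/3$, justified by re-verifying \eqref{eq.eta-n-we-need} for translates). Without (ii) you would only obtain invariance under a family of $s$-dependent shifts, which is not the $\cS_G$-invariance \eqref{eq.again-extra-invariance} needed to apply the permutation-action results.
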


Note that Theorem \ref{thm-C} stated in the introduction is a special case of Theorem \ref{thm.type-bernoulli}.

\begin{remark}\label{rem.stable-type}
Write $\gamma(g) = \mu_g(0)$. The cocycle $c$ in Theorem \ref{thm.type-bernoulli} not being a coboundary, but cohomologous to a $1$-cocycle in $\Zai(G,\ell^2(G))$, is equivalent to the existence of a partition $G = W_1 \sqcup \cdots \sqcup W_n$ of $G$ into $2 \leq n < +\infty$ disjoint almost invariant infinite subsets and the existence of a function $\zeta : G \recht [\delta,1-\delta]$ taking (distinct) constant values $\lambda_i$ on each $W_i$, such that $\gamma-\zeta \in \ell^2(G)$. The type and stable type of $G \actson (X,\mu)$ are then determined as follows in terms of $\zeta$.

When $W \subset G$ is almost invariant, composing the $1$-cocycle $c_W$ with the sum $\ell^1(G) \recht \C$, we get that
\begin{equation}\label{eq.group-hom-Om-W}
\Om_W : G \recht \Z : \Om_W(g) = |W \setminus g W| - |g W \setminus W|
\end{equation}
is a group homomorphism.

Define the subgroup $\Lambda \subset \R$ generated by
\begin{equation}\label{eq.group-Lambda}
\log\Bigl(\frac{\lambda_i}{1-\lambda_i}\Bigr) - \log\Bigl(\frac{\lambda_j}{1-\lambda_j}\Bigr) \;\;\text{for all $i \neq j$,}
\end{equation}
and define the group homomorphism
\begin{equation}\label{eq.group-hom-Om}
\Om : G \recht \R : \Om(g) = \sum_{i=1}^n \Om_{W_i}(g) \, \log(\lambda_i) \; .
\end{equation}
\begin{enumlist}
\item If $\Lambda \subset \R$ is dense, then $G \actson (X,\mu)$ is of stable type III$_1$. If $\Lambda = a \Z$ with $a > 0$ and if $\Om(G) \subset a \Z$, then $G \actson (X,\mu)$ is of stable type III$_\lambda$ with $\lambda = \exp(-a)$.

\item If $\Lambda = a \Z$ with $a > 0$ and if $\Om(G) + a \Z \subset \R$ is dense, then $G \actson (X,\mu)$ is of type III$_1$, but not of stable type III$_1$. The possible types of $G \actson X \times Y$ for $G \actson (Y,\eta)$ pmp ergodic, range over III$_1$ and III$_\lambda$ with $\lambda = \exp(-a/k)$ and $k \in \N$.

\item If $\Lambda = a \Z$ with $a > 0$ and if $\Om(G) + a \Z = (a/k_0) \Z$ for $k_0 \in \{2,3,\ldots\}$, then $G \actson (X,\mu)$ is of type III$_\lambda$ with $\lambda = \exp(-(a/k_0))$, but not of stable type III$_\lambda$. The possible types of $G \actson X \times Y$ for $G \actson (Y,\eta)$ pmp ergodic, range over III$_\lambda$ with $\lambda = \exp(-a/k)$ and $k \in \N$ dividing $k_0$.
\end{enumlist}
\end{remark}

As for the proof of Theorems \ref{thm.type-abelian} and \ref{thm.type-abelian-locally-finite}, the main technical step is to prove that functions that are invariant under the Maharam extension of $G \actson (X,\mu)$ are automatically invariant under the Maharam extension of the permutation action $\cS_G \actson (X,\mu)$. So we prove the following variant of Lemma \ref{lem.automatic-invariance-permutation-group}. The proof of the lemma is a substantial refinement of the proofs of Lemma \ref{lem.automatic-invariance-permutation-group} and Theorem \ref{thm.general-ergodicity}. Given a $G$-invariant function $F \in L^\infty(X \times \R)$, we study the behavior of $F(\tau_s(x),t)$, where $\tau_s \in \Aut(X,\mu)$ changes the $s$'th coordinate. Compared to the proof of Lemma \ref{lem.automatic-invariance-permutation-group}, two complications arise. First, it need no longer be true that $\mu_{ga}(0)/\mu_{gb}(0)$ essentially converges to $1$ as $g \recht \infty$ with $a,b$ fixed. Secondly, as $G$ can be nonamenable, we cannot apply the ergodic theorem of \cite[Theorem A.1]{D18}, but we have to use Lemma \ref{lem.ucp} instead.

\begin{lemma}\label{lem.general-automatic-invariance-permutation-group}
Assume that the hypotheses of Theorem \ref{thm.type-bernoulli} hold. If $G \actson (Y,\eta)$ is any ergodic pmp action and $G \actson X \times \R \times Y$ is its diagonal product with the Maharam extension of $G \actson (X,\mu)$, then any $G$-invariant function $F \in L^\infty(X \times \R \times Y)$ satisfies
\begin{equation}\label{eq.again-extra-invariance}
F(\si \cdot x , t + \be(\si,x), y) = F(x,t,y) \quad\text{for all $\si \in \cS_G$ and a.e.\ $(x,t,y) \in X \times \R \times Y$,}
\end{equation}
where $\be(\si,x) = \log(d\mu(\si \cdot x)/d\mu(x))$.
\end{lemma}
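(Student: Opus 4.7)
The plan is to adapt the strategy of Lemma \ref{lem.automatic-invariance-permutation-group}, replacing the pointwise ratio ergodic theorem of \cite{D18} (which is only available for amenable groups and which in addition relies on right-nonsingularity) by the ucp conditional expectations $\theta_{\eta_n}$ of Lemma \ref{lem.ucp}, applied to a strongly recurrent sequence of probability measures $\eta_n$ on $G$. In the amenable conservative case I would take $\eta_n$ uniform on a right F\o lner sequence (Proposition \ref{prop.characterize-strong-conservative}); in the strongly conservative case I would take the $\eta_n$ produced by Proposition \ref{prop.strong-conservative-Bernoulli}. Throughout the argument I work on the probability space $(X\times\R\times Y,\mu_\al\times\eta)$ with $\mu_\al=\mu\times\nu_\al$ and $\al\in(0,\al_0)$ small enough that the Maharam extension is itself strongly conservative; then $\theta_{\eta_n}$ is measure-preserving and fixes every $G$-invariant function.

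Since $\cS_G$ is generated by the simple transpositions and the Maharam cocycle $\beta$ is multiplicative in the permutation variable, it suffices to prove $F\circ\tilde\sigma_{a,b}=F$ a.e.\ for each $a\ne b$ in $G$, where $\tilde\sigma_{a,b}(x,t,y)=(\sigma_{a,b}\cdot x,\,t+R_{a,b}(x_a,x_b),\,y)$ and $R_{a,b}(i,j)=\log D_{a,b}(i,j)$. Approximate $F$ in $L^1$-norm by a function $F_0\in L^\infty(\{0,1\}^\cF\times\R\times Y)$ depending only on finitely many coordinates $\cF\subset G$ and uniformly continuous in the $t$-variable. The key commutation identity $k\cdot\tilde\sigma_{a,b}=\tilde\sigma_{ka,kb}\cdot k$, a direct consequence of \eqref{eq.basic-formula-flip}, shows that as soon as $ka,kb\notin\cF$ the value $F_0(k\cdot\tilde\sigma_{a,b}\omega)$ equals $F_0(k\cdot\omega)$ up to a shift of $R_{ka,kb}(x_a,x_b)$ in the $t$-variable. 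Split $G=V_\eps\sqcup W_\eps$ with $V_\eps=\{k\in G:|R_{ka,kb}(i,j)|\le\eps\text{ for all }i,j\in\{0,1\}\}$; on $V_\eps$ the $t$-shift is negligible and the Radon-Nikodym weights at $\omega$ and $\tilde\sigma_{a,b}\omega$ are comparable up to a factor close to $1$, so the corresponding summands in $\theta_{\eta_n}(F)(\omega)$ and $\theta_{\eta_n}(F)(\tilde\sigma_{a,b}\omega)$ differ by $O(\eps)$.

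The main obstacle is controlling the bad part $W_\eps$. In the amenable right-nonsingular setting of Lemma \ref{lem.automatic-invariance-permutation-group}, Claim~1 there forced $W_\eps$ to be essentially finite via the identity $\mu_{ga}(0)-\mu_{gb}(0)=c_{ab^{-1}}(ga)$ combined with Kakutani's criterion for the right Bernoulli shift. Neither the identity nor the right-nonsingularity is available here, and $W_\eps$ can be genuinely infinite. My plan is to exploit the concrete structure of the $\eta_n$ supplied by Proposition \ref{prop.strong-conservative-Bernoulli}: they are uniform on finite sets $\cF_n$ with $\|c_k\|_2^2\le s_n$ for all $k\in\cF_n$ and $|\cF_n|\ge\exp(4\kappa_2 s_n)$, and hence $\|c_k\|_2^2\le 4s_n$ for all $k\in\cF_n^{-1}\cF_n$. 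A joint Cauchy-Schwarz estimate in the spirit of \eqref{eq.estimate-integral}, tracking the pair $(c_k(ka),c_k(kb))$ on this product set, should yield $\sum_{k\in W_\eps}\|\xi_n(k,\cdot)\|_{L^1(\mu_\al\times\eta)}\to 0$ as $n\to\infty$. Once this bad-set decay is in hand, a computation modelled on the proof of Lemma \ref{lem.consequence-strong-conservative} (applied to $\vphi=\tilde\sigma_{a,b}\times\id_Y$) gives $\|F-F\circ\tilde\sigma_{a,b}\|_{L^1(\mu_\al\times\eta)}\le C_\delta\,\eps$ for a constant $C_\delta$ depending only on $\delta$ and $\al$, and letting $\eps\searrow 0$ completes the proof.
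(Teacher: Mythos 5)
Your overall framework is the right one and matches the paper's setup: work on the probability space $(X\times\R\times Y,\mu_\al\times\eta)$, use the ucp maps of Lemma \ref{lem.ucp} with a strongly recurrent sequence $\eta_n$ in place of the ratio ergodic theorem, approximate $F$ by finitely supported $F_0$, and exploit the commutation $k\cdot\tilde\sigma_{a,b}=\tilde\sigma_{ka,kb}\cdot k$. The gap is in the one step you flag as the main obstacle, and it is not repairable as stated: the claimed decay $\sum_{k\in W_\eps}\|\xi_n(k,\cdot)\|_1\to0$ is false in general. Indeed $\mu_{ka}(0)-\mu_{kb}(0)=c_k(ka)-c_k(kb)+(\mu_a(0)-\mu_b(0))$, so bounds on $\|c_k\|_2$ control only the first two terms and say nothing about the constant $\mu_a(0)-\mu_b(0)$; more to the point, in the very examples the lemma must cover (Theorem \ref{thm.other-version-main-thm}.2, where $\mu_g(0)$ takes two values $\lambda_1\neq\lambda_2$ on a partition $G=W_1\sqcup W_2$ into almost invariant sets), the bad set $W_\eps$ contains $W_1a^{-1}\cap W_2b^{-1}$, which is typically infinite (right translates of a left almost invariant set need not almost coincide) and carries a non-vanishing fraction of the $\eta_n$-weighted Radon--Nikodym mass, since membership in $W_\eps$ is uncorrelated with the sublevel sets of $\|c_\cdot\|_2^2$ on which the $\eta_n$ of Proposition \ref{prop.strong-conservative-Bernoulli} are supported. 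These are exactly the type III$_\lambda$ situations, where the shifts $R_{ka,kb}(x_a,x_b)$ genuinely do not tend to $0$, so no Cauchy--Schwarz refinement of \eqref{eq.estimate-integral} can make $W_\eps$ negligible. (A secondary inaccuracy: even for $k\in V_\eps$, the ratio $\xi_n(k,\tilde\sigma_{a,b}\omega)/\xi_n(k,\omega)$ is only bounded by a fixed constant depending on $\delta$, not by $1+O(\eps)$, because the normalizing denominator sums over all of $G$ including $W_\eps$.)

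The paper's proof accepts that the shifts do not vanish and instead analyzes their limiting \emph{distribution}. It first applies a trichotomy (Lemma \ref{lem.technical-periods}) on the period group of $t\mapsto F(x,t,y)$; in the non-degenerate case it forms, for the single-coordinate flips $\tau_s$, the probability measures $\rho_i(s,x)=\sum_k p_i(k,\tau_s(x))\,\delta(R_s(x_s)-R_{ks}(x_s))$ on a compact interval, proves via the Choquet--Deny theorem that every weak$^*$ limit point is a Dirac mass $\delta(\rho(s,x))$ satisfying $F(\tau_s(x),\cdot,y)=F(x,\cdot+\rho(s,x),y)$, shows $\rho(s,x)$ is independent of $x$, and finally shows it is independent of $s$ by producing strongly recurrent sequences with $\|s\cdot\eta_n-\eta_n\|_1\leq 2/3$ (averaging $\eta_n$, $s\cdot\eta_n$, $s^2\cdot\eta_n$). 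The resulting identity $F(\tau_s(x),t,y)=F(x,t+R_s(x_s)-\rho(x_s),y)$ has the unknown constant $\rho$ cancel when two flips are combined into a transposition, which yields \eqref{eq.again-extra-invariance}. So the missing idea is precisely this replacement of ``the shift is small for most $k$'' by ``the distribution of the shift concentrates at a single, coordinate-independent value,'' and you would need to import the period trichotomy, the Choquet--Deny step, and the almost-invariance of $\eta_n$ under left translation to close the argument.
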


Before proving Lemma \ref{lem.general-automatic-invariance-permutation-group}, we prove the following result about arbitrary ergodic nonsingular actions. Given $H \in L^\infty(\R)$, we define
$$\per(H) = \{p \in \R \mid H(t + p) = H(t) \;\;\text{for a.e.\ $t \in \R$}\;\} \; .$$
Since the translation action of $\R$ on $L^\infty(\R)$ is weak$^*$ continuous, $\per(H)$ is always a closed subgroup of $\R$.

\begin{lemma}\label{lem.technical-periods}
Let $G$ be any countable group and $G \actson (Z,\zeta)$ any nonsingular ergodic action with Maharam extension $G \actson Z \times \R$. Let $F : Z \times \R \recht \C$ be a bounded $G$-invariant Borel function. Then exactly one of the following statements holds.
\begin{enumlist}
\item The function $F$ is essentially constant.
\item For a.e.\ $z \in Z$, we have that $\per(F(z,\cdot)) = \{0\}$.
\item There exists a $p > 0$ such that $\per(F(z,\cdot)) = p \Z$ for a.e.\ $z \in Z$.
\end{enumlist}
\end{lemma}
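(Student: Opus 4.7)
The strategy is to reduce the statement to an application of the ergodicity of $G \actson (Z,\zeta)$ applied to a suitable measurable $G$-invariant function $\pi : Z \recht [0,+\infty]$ encoding the period group of the sections $F(z,\cdot)$. The possible values of $\pi$ correspond exactly to the three cases: $\pi(z) = 0$ means $\per(F(z,\cdot)) = \R$, $\pi(z) = +\infty$ means $\per(F(z,\cdot)) = \{0\}$, and $\pi(z) \in (0,+\infty)$ means $\per(F(z,\cdot)) = \pi(z)\Z$.

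First, fix an auxiliary probability measure $\nu$ on $\R$ equivalent to Lebesgue measure (e.g.\ $d\nu(t) = (1/2)\exp(-|t|)\,dt$) and define
$$\vphi : Z \times \R \recht [0,+\infty) : \vphi(z,p) = \int_\R |F(z,t+p) - F(z,t)| \, d\nu(t) \; .$$
By Fubini, $\vphi$ is jointly measurable, and for each fixed $z$ the map $p \mapsto \vphi(z,p)$ is continuous (by dominated convergence, using weak$^*$-continuity of the translation action). One has $p \in \per(F(z,\cdot))$ if and only if $\vphi(z,p) = 0$, so $\per(F(z,\cdot))$ equals the zero set of the continuous function $\vphi(z,\cdot)$. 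Define
$$\pi(z) = \inf\{p > 0 \mid \vphi(z,p) = 0\} \in [0,+\infty] \; ,$$
with the convention $\inf \emptyset = +\infty$. Using continuity of $p \mapsto \vphi(z,p)$ and compactness of $[1/n,a]$, one checks that
$$\{z \mid \pi(z) > a\} = \bigcap_{n \geq 1} \Bigl\{z \bigm| \inf_{p \in \Q \cap [1/n,a]} \vphi(z,p) > 0\Bigr\} \; ,$$
which is a measurable set. Hence $\pi$ is measurable.

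Next, since $F$ is $G$-invariant under the Maharam extension, one has $F(g \cdot z, t + \al(g,z)) = F(z,t)$ a.e., i.e.\ for a.e.\ $z \in Z$, the sections $F(g \cdot z, \cdot)$ and $F(z,\cdot)$ differ only by translation in $\R$. Translation preserves the period group, so $\per(F(g \cdot z, \cdot)) = \per(F(z,\cdot))$, and therefore $\pi(g \cdot z) = \pi(z)$ for a.e.\ $z$ for each $g \in G$. Ergodicity of $G \actson (Z,\zeta)$ now forces $\pi$ to equal some constant $c \in [0,+\infty]$ a.e.

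Finally we analyze the three cases. If $c \in (0,+\infty)$, then $\per(F(z,\cdot)) = c\Z$ for a.e.\ $z$, giving case (3). If $c = +\infty$, then $\per(F(z,\cdot)) = \{0\}$ for a.e.\ $z$, giving case (2). If $c = 0$, then $\per(F(z,\cdot)) = \R$ for a.e.\ $z$, and a standard Fubini argument (for each fixed $z$, $\vphi(z,p) = 0$ for all $p \in \R$ implies that the jointly measurable map $(s,t) \mapsto F(z,s+t) - F(z,s)$ vanishes a.e., hence $F(z,\cdot)$ is essentially constant) shows that $F(z,t) = f(z)$ for some measurable $f : Z \recht \C$. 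The Maharam-invariance of $F$ then yields $f(g \cdot z) = f(z)$ a.e.\ for each $g$, so by ergodicity of $G \actson (Z,\zeta)$ the function $f$, and hence $F$, is essentially constant, giving case (1). Since the three cases correspond to the three mutually disjoint possibilities $c \in \{0\}$, $c \in (0,+\infty)$, $c = +\infty$, exactly one of them holds. The main technical point to be careful about is the measurability of $\pi$, which is handled by the continuity-in-$p$ plus countable-intersection argument above.
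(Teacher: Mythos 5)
Your proof is correct, and at the structural level it follows the same strategy as the paper: extract from $F$ a measurable, $G$-invariant ``period function'' on $Z$ and let ergodicity of $G \actson (Z,\zeta)$ force it to be constant. The genuine difference is in how measurability is obtained and how case (1) is handled. The paper first splits off the essentially-constant case by observing that $z \mapsto F(z,\cdot)$ is a Borel map into $L^\infty(\R)$ with the weak$^*$ topology, so $\{z \mid F(z,\cdot) \text{ essentially constant}\}$ is a $G$-invariant Borel set; on the complement it obtains Borelness of the generator $p(z)$ of $\per(F(z,\cdot)) = p(z)\Z$ from the fact that the projection $\cE \recht \cU$ is countable-to-one (a Lusin--Novikov type argument). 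You instead encode all three cases in a single function $\pi : Z \recht [0,+\infty]$ and prove its measurability by hand via the jointly measurable functional $\vphi(z,p) = \int_\R |F(z,t+p)-F(z,t)|\,d\nu(t)$, continuous in $p$; this is more elementary and self-contained, at the cost of a slightly longer verification. Two minor remarks, neither a gap: the continuity of $p \mapsto \vphi(z,p)$ is not really a dominated-convergence statement (pointwise convergence of $F(z,t+p)$ as $p \to p_0$ can fail for a Borel function); the right justification is norm-continuity of translations on $L^1$, obtained by approximating the bounded section $F(z,\cdot)$ by continuous compactly supported functions. And in the case $c=0$ you implicitly use that $\per(F(z,\cdot))$ is a \emph{closed} subgroup of $\R$ to pass from ``arbitrarily small positive periods'' to $\per = \R$; the paper records this fact just before the lemma, so it is available.
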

\begin{proof}
Since $F$ induces the Borel map $z \mapsto F(z,\cdot)$ from $Z$ to $L^\infty(\R)$ equipped with the weak$^*$ topology, the set of $z \in Z$ such that $F(z,\cdot)$ is essentially constant, is Borel. By the $G$-invariance of $F$, this set is $G$-invariant. So either 1 holds, or we find a $G$-invariant Borel set $\cU \subset Z$ with $\zeta(\cU) = 1$ such that for every $z \in \cU$, the function $F(z,\cdot)$ is not essentially constant. In the latter case, we define the Borel set
$$\cE = \{(z,p) \in \cU \times \R \mid F(z,t) = F(z,t+p) \;\;\text{for a.e.\ $t \in \R$}\;\} \; .$$
So $(z,p) \in \cE$ if and only if $p \in \per(F(z,\cdot))$. For $z \in \cU$, the function $F(z,\cdot)$ is not essentially constant, so that $\per(F(z,\cdot)) \neq \R$. It follows that for every $z \in \cU$, there is a unique $p(z) \geq 0$ such that $\per(F(z,\cdot)) = p(z) \Z$. By the $G$-invariance of $F$, the map $z \mapsto p(z)$ is $G$-invariant. Since
the map $(z,p) \mapsto z$ from $\cE$ to $\cU$ is countable-to-one, $p$ is Borel.

So either $p(z) = 0$ for a.e.\ $z \in \cU$ and 2 holds, or $p(z) = p > 0$ for a.e.\ $z \in \cU$ and 3 holds.
\end{proof}

We are now ready to prove Lemma \ref{lem.general-automatic-invariance-permutation-group}.

\begin{proof}[Proof of Lemma \ref{lem.general-automatic-invariance-permutation-group}]
Let $G \actson (Y,\eta)$ be an ergodic pmp action and $F \in X \times \R \times Y \recht \C$ a $G$-invariant Borel function. Without lack of generality, we may assume that $F$ takes values in the interval $[0,1]$. By Theorem \ref{thm.general-ergodicity} and Propositions \ref{prop.characterize-strong-conservative} and \ref{prop.strong-conservative-Bernoulli}, the action $G \actson X \times Y$ is ergodic. Applying Lemma \ref{lem.technical-periods} to $G \actson X \times Y$, we are in precisely one of the following cases.

{\bf Case 1.} The function $F$ is essentially constant.

{\bf Case 2.} For a.e.\ $(x,y) \in X \times Y$, we have that $\per(F(x,\cdot,y)) = \{0\}$.

{\bf Case 3.} There exists a $p > 0$ such that $\per(F(x,\cdot,y)) = p \Z$ for a.e.\ $(x,y) \in X \times Y$.

In case 1, \eqref{eq.again-extra-invariance} holds trivially.

{\bf Proof in case 2.} As in the proof of Theorem \ref{thm.general-ergodicity}, denote by $\tau : \{0,1\} \recht \{0,1\} : \tau(i) = 1-i$ the map that exchanges $0$ and $1$, and define, for every $g \in G$, the transformation $\tau_g \in \Aut(X,\mu)$ given by changing the $g$'th coordinate. Denote
$$R_g = \log \frac{\mu_g(0)}{\mu_g(1)} \;\;\text{for all $g \in G$.}$$
Writing $C = \log((1-\delta)/\delta)$, we have $R_g \in [-C,C]$ for all $g \in G$.

{\bf Notation.} To every $R \in \R$, we associate the function $R : \{0,1\} \recht \R$ given by $R(0) = R$ and $R(1) = -R$.

To prove the lemma in case 2, it suffices to prove the following statement.

{\bf Claim 1.} There exists a $\rho \in [-C,C]$ such that
$$F(\tau_s(x),t,y) = F(x,t+R_s(x_s) - \rho(x_s),y) \quad\text{for all $s \in G$ and a.e.\ $(x,t,y) \in X \times \R \times Y$.}$$
Denote by $\nu_\al$ the probability measure on $\R$ given by $d\nu_\al(t) = (\al/2) \exp(-\al |t|) dt$. Write $\mu_\al = \mu \times \nu_\al$. Fix $\al_0 > 0$ such that there exists a sequence of probability measures $\eta_n$ on $G$ that is strongly recurrent for $G \actson (X \times \R,\mu_\al)$ for each $\al \in (0,\al_0)$. By the hypotheses of the theorem, Proposition \ref{prop.characterize-strong-conservative} and Proposition \ref{prop.strong-conservative-Bernoulli}, such an $\al_0 > 0$ exists.

In view of applying Lemma \ref{lem.ucp}, define
$$p(\al,n ; k,x,t) = \sum_{g \in G} \eta_n(g) \, \frac{\dis \eta_n(gk^{-1}) \, \frac{d\mu_\al(k\cdot(x,t))}{d\mu_\al(x,t)}}{\dis \sum_{h \in G} \eta_n(gh^{-1}) \, \frac{d\mu_\al(h\cdot(x,t))}{d\mu_\al(x,t)}} \; .$$
Note that $\sum_{k \in G} p(\al,n;k,x,t) = 1$ for all $\al,n,x,t$. Whenever $\al_i \recht 0$ in $(0,\al_0)$ and $n_i \recht +\infty$ in $\N$, write $p_i(k,x) = p(\al_i,n_i; k,x,0)$ and consider the probability measures on $[-2C,2C]$ given by
$$\rho_i(s,x) = \sum_{k \in G} p_i(k,\tau_s(x)) \, \delta(R_s(x_s)-R_{ks}(x_s)) \; ,$$
where $\delta(R)$ denotes the Dirac measure in $R \in \R$. The main step towards proving claim 1, is to prove the following statement.

{\bf Claim 2.} There exist sequences $\al_i \recht 0$ in $(0,\al_0)$ and $n_i \recht +\infty$ in $\N$ such that for all $s \in G$ and a.e.\ $x \in X$, the probability measures $\rho_i(s,x)$ converge weakly$^*$ to a Dirac measure $\delta(\rho(s,x))$ and
$$F(\tau_s(x),t,y) = F(x,t+\rho(s,x),y) \quad\text{for all $s \in G$ and a.e.\ $(x,t,y) \in X \times \R \times Y$.}$$

Write
$$F_\al(t,s) = \frac{d\nu_\al(t+s)}{d\nu_\al(t)} = \exp(-\al |t+s| + \al |t|) \; .$$
Note that
\begin{equation}\label{eq.approx-inv}
\exp(-\al |s|) \leq F_\al(t,s) \leq \exp(\al |s|) \quad\text{for all $s,t \in \R$.}
\end{equation}
The Maharam extension $G \actson X \times \R$ is given by $g \cdot (x,t) = (g \cdot x, t + \al(g,x))$. Then,
$$\frac{d\mu_\al(k\cdot (x,t))}{d\mu_\al(x,t)} = \frac{d\mu(k \cdot x)}{d\mu(x)} \, F_\al(t,\al(k,x)) \; .$$
Note that for all $s,t \in \R$,
$$\exp(-2 \al |t-t'|) \leq \frac{F_\al(t,s)}{F_\al(t',s)} \leq \exp(2\al |t-t'|) \; .$$
It follows that
\begin{equation}\label{eq.imp-estim}
\exp(-4\al |t-t'|) \; p(\al,n;k,x,t') \leq p(\al,n;k,x,t) \leq \exp(4\al |t-t'|) \; p(\al,n;k,x,t') \; .
\end{equation}
We continuously use \eqref{eq.imp-estim} to control the asymptotic independence of $p(\al,n;k,x,t)$ of the variable $t$, as $\al \recht 0$.

Also note that
\begin{equation}\label{eq.essential-algebra}
\al(k,\tau_s(x)) = \al(k,x) - R_{ks}(x_s) + R_s(x_s) \; .
\end{equation}
We then find a constant $C_1 \geq 1$ such that
\begin{equation}\label{eq.second-imp-estim}
C_1^{-1} \, p(\al,n;k,x,t) \leq p(\al,n;k,\tau_s(x),t) \leq C_1 \, p(\al,n;k,x,t) \quad\text{for all $\al,n,k,s,x,t$.}
\end{equation}

We have now introduced enough notation to prove claim 2. We denote by $\|\,\cdot\,\|_{\mu_\al}$ the $L^1$-norm w.r.t.\ the probability measure $\mu_\al \times \eta$ on $X \times \R \times Y$. By Lemma \ref{lem.ucp}, the maps
\begin{multline*}
 \Psi_{\al,n} : L^\infty(X \times \R \times Y) \recht L^\infty(X \times \R \times Y) : \\  (\Psi_{\al,n}(H))(x,t,y) = \sum_{k \in G} p(\al,n;k,x,t) \, H(k\cdot (x,t,y))
\end{multline*}
satisfy $\|\Psi_{\al,n}(H)\|_{\mu_\al} \leq \|H\|_{\mu_\al}$ for all $H \in L^\infty(X \times \R \times Y)$.

Also define, for arbitrary $s \in G$, the positive maps
\begin{multline*}
\Theta_{\al,n,s} : L^\infty(X \times \R \times Y) \recht L^\infty(X \times \R \times Y) : \\ (\Theta_{\al,n,s}(H))(x,t,y) = \sum_{k \in G} p(\al,n;k,\tau_s(x),t) \, H(k\cdot x,t+\al(k,\tau_s(x)),k \cdot y) \; .
\end{multline*}
Using \eqref{eq.essential-algebra} and \eqref{eq.second-imp-estim}, every positive $H \in L^\infty(X \times \R \times Y)$ satisfies
\begin{multline*}
\|\Theta_{\al,n,s}(H)\|_{\mu_\al}\\ \leq C_1 \, \sum_{k \in G} \int_{X \times \R \times Y} \; p(\al,n;k,x,t) \; H(k \cdot x, t + \al(k,x) - R_{ks}(x_s) + R_s(x_s),k \cdot y) \; d(\mu_\al \times \eta)(x,t,y) \; .
\end{multline*}
Since $|R_{ks} - R_s| \leq 2 C$, using \eqref{eq.approx-inv} and \eqref{eq.imp-estim}, we then find a constant $C_2>0$ such that for every positive $H \in L^\infty(X \times \R \times Y)$,
\begin{align*}
\|\Theta_{\al,n,s}(H)\|_{\mu_\al} &\leq C_2 \, \sum_{k \in G} \int_{X \times \R \times Y} \; p(\al,n;k,x,t) \; H(k \cdot x, t + \al(k,x),k \cdot y) \; d(\mu_\al \times \eta)(x,t,y) \\ &= C_2 \, \|\Psi_{\al,n}(H)\|_{\mu_\al} \leq C_2 \, \|H\|_{\mu_\al} \; .
\end{align*}

When $\cF \subset G$ is a finite subset and $H \in L^\infty(\{0,1\}^\cF \times \R \times Y)$, we get that
$$H(k\cdot(\tau_s(x),t,y)) = H(k \cdot x,t+\al(k,\tau_s(x)),k \cdot y)$$
for all $k \in G \setminus \cF s^{-1}$.

As in the proof of Lemma \ref{lem.consequence-strong-conservative}, since $\eta_n$ is strongly recurrent for $G \actson (X\times \R,\mu_\al)$, we have for every fixed $k \in G$ and $\al \in (0,\al_0)$ that
$$\lim_{n \recht +\infty} \int_{X \times \R} p(\al,n;k,x,t) \, d\mu_\al(x,t) = 0 \; .$$
Then, using \eqref{eq.second-imp-estim}, we also have that
$$\lim_{n \recht +\infty} \int_{X \times \R} p(\al,n;k,\tau_s(x),t) \, d\mu_\al(x,t) = 0 \; ,$$
for every $s \in G$.

Therefore, as $n \recht +\infty$, in the definition of $\Psi_{\al,n}$ and $\Theta_{\al,n,s}$, we may ignore the terms in the sum with $k \in \cF s^{-1}$. We then conclude that
\begin{equation}\label{eq.conv}
\lim_{n \recht +\infty} \|\Psi_{\al,n}(H) \circ (\tau_s \times \id \times \id) - \Theta_{\al,n,s}(H)\|_{\mu_\al} = 0 \quad\text{for all $\al \in (0,\al_0)$.}
\end{equation}
By the $\|\,\cdot\,\|_{\mu_\al}$-boundedness of $\Psi_{\al,n}$ and $\Theta_{\al,n,s}$, it then follows that \eqref{eq.conv} holds for all $H \in L^\infty(X \times \R \times Y)$. In particular, \eqref{eq.conv} holds for $H = F$.

Finally, define
\begin{multline*}
\Gamma_{\al,n,s} : L^\infty(X \times \R \times Y) \recht L^\infty(X \times \R \times Y) : \\ (\Gamma_{\al,n,s}(H))(x,t,y) = \sum_{k \in G} p(\al,n;k,\tau_s(x),t) \, H(x,t+R_s(x_s)-R_{ks}(x_s), y) \; .
\end{multline*}
By the $G$-invariance of $F$, we have $\Psi_{\al,n}(F) = F$ and $\Theta_{\al,n,s}(F) = \Gamma_{\al,n,s}(F)$. So, we conclude that for every $\al \in (0,\al_0)$ and every $s \in G$,
$$\lim_{n \recht +\infty} \|F \circ (\tau_s \times \id \times \id) - \Gamma_{\al,n,s}(F)\|_{\mu_\al} = 0 \; .$$
Fix any sequence $\al_i \recht 0$ in $(0,\al_0)$. We can then pick $n_i \recht +\infty$, such that
$$\Gamma_{\al_i,n_i,s}(F) \recht F \circ (\tau_s \times \id \times \id) \quad \text{almost everywhere, for all $s \in G$.}$$
Using \eqref{eq.imp-estim} and the fact that $\al_i \recht 0$, it follows that for a.e.\ $(x,y) \in X \times Y$,
\begin{equation}\label{eq.my-step}
\rho_i(s,x) * F(x,\cdot,y) \recht F(\tau_s(x), \cdot,y) \quad\text{a.e.}
\end{equation}
Fix $s \in G$. Fix $(x,y) \in X \times Y$ such that \eqref{eq.my-step} holds for both $(x,y)$ and $(\tau_s(x),y)$, and such that $\per(F(x,\cdot,y)) = \{0\}$. Note that a.e.\ $(x,y) \in X \times Y$ has these properties. Let $\rho_0$ be any weak$^*$ limit point of the sequence of probability measures $\rho_i(s,x)$. We can then take a subsequence $i_j$ such that $\rho_{i_j}(s,x) \recht \rho_0$ weakly$^*$ and also $\rho_{i_j}(s,\tau_s(x)) \recht \rho_1$ weakly$^*$, for some probability measure $\rho_1$. For every $H \in L^\infty(\R)$, the map $\rho \mapsto \rho * H$ is continuous from the space of probability measures on $[-2C,2C]$ equipped with the weak$^*$ topology to $L^\infty(\R)$ equipped with the weak$^*$ topology. By \eqref{eq.my-step}, we get that
$$\rho_0 * F(x,\cdot,y) = F(\tau_s(x),\cdot,y) \quad\text{and}\quad \rho_1 * F(\tau_s(x),\cdot,y) = F(x,\cdot,y) \; .$$
But then $(\rho_1 * \rho_0) * F(x,\cdot,y) = F(x,\cdot,y)$ a.e. Since $\per(F(x,\cdot,y)) = \{0\}$, it follows from the Choquet-Deny theorem (see \cite[Th\'{e}or\`{e}me 1]{CD60}) that $\rho_1 * \rho_0$ is the Dirac measure in $0$. We conclude that $\rho_0$ is a Dirac measure in some point $a \in \R$. By \eqref{eq.my-step}, $F(\tau_s(x),t,y) = F(x,t+a,y)$ for a.e.\ $t \in \R$. Since $\per(F(x,\cdot,y)) = \{0\}$, there is at most one $a \in \R$ satisfying this formula. So we have proved that each weak$^*$ limit point of $\rho_i(s,x)$ is the Dirac measure in the same point. Thus, claim 2 is proven.

Define the probability measures $\zeta_i(s,x)$ on $[-C,C]$ given by
$$\zeta_i(s,x) = \sum_{k \in G} p_i(k,\tau_s(x)) \, \delta(R_{ks}) \; .$$
From claim 2, we get that for all $s \in G$ and a.e.\ $x \in X$, the probability measures $\zeta_i(s,x)$ converge weakly$^*$ to a Dirac measure that we denote by $\delta(\zeta(s,x))$. By \eqref{eq.second-imp-estim}, we have for all $k,g \in G$ and $x \in X$, that $p_i(k,\tau_g(x)) \leq C_1 \, p_i(k,x)$. It then follows that
$$\delta(\zeta(s,\tau_g(x))) \leq C_1 \, \delta(\zeta(s,x)) \; .$$
We conclude that $\zeta(s,\tau_g(x)) = \zeta(s,x)$ for all $g \in G$. This means that $\zeta(s,x)$ is essentially independent of the $x$ variable. We thus define $\zeta_s \in [-C,C]$ such that $\zeta(s,x) = \zeta_s$ for all $s \in G$ and a.e.\ $x \in X$.

To prove claim 1, we have to show that $\zeta_s$ does not depend on $s \in G$. Note that $\zeta_s$ is the unique element of $\R$ satisfying $F(\tau_s(x),t,y) = F(x,t+R_s(x_s)-\zeta_s(x_s),y)$ for a.e.\ $x,t,y$. The uniqueness follows, because $t \mapsto F(x,t,y)$ is not periodic. Moreover, by claim 2, whenever $\eta_n$ is a sequence of probability measures that is strongly recurrent for $G \actson (X \times \R,\mu_\al)$ for all $\al \in (0,\al_0)$, we can choose $\al_i \recht 0$ and $n_i \recht +\infty$ such that $\zeta_i(s,x) \recht \delta(\zeta_s)$ weakly$^*$, for all $s \in G$ and a.e.\ $x \in X$.

{\bf Claim 3.} For every fixed $s \in G$, there exists a sequence of probability measures $\eta_n$ on $G$ that is strongly recurrent for $G \actson (X \times \R,\mu_\al)$ for all $\al \in (0,\al_0)$ and that moreover satisfies $\|s \cdot \eta_n - \eta_n\|_1 \leq 2/3$ for every $n$.

When $G$ is amenable, claim 3 follows immediately from Proposition \ref{prop.characterize-strong-conservative}. Under the second hypothesis, fix $\kappa_1$ with $\delta^{-1} (1-\delta)^{-1} < \kappa_1 < \kappa$ and fix $\al_0 > 0$ so that the conclusion of Proposition \ref{prop.strong-conservative-Bernoulli} holds. Fix $\kappa_2$ such that $\kappa_1 < \kappa_2 < \kappa$. By Proposition \ref{prop.strong-conservative-Bernoulli}, take a sequence of probability measures $\eta_n$ on $G$ such that
$$\sum_{k,g \in G} \eta_n(g)^2 \, \eta_n(gk^{-1}) \, \exp\bigl(\kappa_2 \, \|c_k\|_2^2\bigr) \; \recht \; 0 \; .$$
For every $h \in G$, the probability measure $h \cdot \eta_n$ satisfies
$$\sum_{k,g \in G} (h\cdot\eta_n)(g)^2 \, (h\cdot\eta_n)(gk^{-1}) \, \exp\bigl(\kappa_1 \, \|c_k\|_2^2\bigr) =
\sum_{k,g \in G} \eta_n(g)^2 \, \eta_n(gk^{-1}) \, \exp\bigl(\kappa_1 \, \|c_{h^{-1}kh}\|_2^2\bigr) \; .$$
Since $\|c_{h^{-1} k h}\|_2 \leq \|c_k\|_2 + 2 \|c_h\|_2$ and $\kappa_1 < \kappa_2$, it follows that
$$\sum_{k,g \in G} (h\cdot\eta_n)(g)^2 \, (h\cdot\eta_n)(gk^{-1}) \, \exp\bigl(\kappa_1 \, \|c_k\|_2^2\bigr) \; \recht 0 \; ,$$
for every $h \in G$. We apply this to $h = s$ and $h = s^2$. By Proposition \ref{prop.strong-conservative-Bernoulli}, we conclude that the three sequences $\eta_n$, $s \cdot \eta_n$ and $s^2 \cdot \eta_n$ are strongly recurrent for $G \actson (X \times \R,\mu_\al)$ for all $\al \in (0,\al_0)$. It then follows that also $\eta'_n = (\eta_n + s \cdot \eta_n + s^2 \cdot \eta_n)/3$ satisfies \eqref{eq.eta-n-we-need} and is thus strongly recurrent for $G \actson (X \times \R,\mu_\al)$ for all $\al \in (0,\al_0)$. By construction, $\|s \cdot \eta'_n - \eta'_n\|_1 \leq 2/3$ for every $n$, so that claim 3 is proven.

Fix $s \in G$ and take $\eta_n$ as in claim 3. We now prove that $\zeta_s = \zeta_e$.

Note that
$$p(\al,n;ks^{-1},s \cdot (x,t)) = \sum_{g \in G} (s \cdot \eta_n)(g) \, \frac{\dis \eta_n(gk^{-1}) \, \frac{d\mu_\al(k\cdot(x,t))}{d\mu_\al(x,t)}}{\dis \sum_{h \in G} \eta_n(gh^{-1}) \, \frac{d\mu_\al(h\cdot(x,t))}{d\mu_\al(x,t)}} \; .$$
Using \eqref{eq.imp-estim}, it follows that for a.e.\ $x \in X$,
$$\limsup_{i \recht +\infty} \sum_{k \in G} |p_i(ks^{-1},s \cdot x) - p_i(k,x)| \leq \limsup_{n \recht +\infty} \|s \cdot \eta_n - \eta_n\|_1 \leq \frac{2}{3} \; .$$
But,
$$\sum_{k \in G} p_i(ks^{-1},s \cdot x) \, \delta(R_k) \recht \delta(\zeta_s) \quad\text{and}\quad \sum_{k \in G} p_i(k,x) \, \delta(R_k) \recht \delta(\zeta_e)$$
weakly$^*$, for a.e.\ $x \in X$. Thus, $\|\delta(\zeta_s) - \delta(\zeta_e)\| \leq 2/3$, so that $\zeta_s = \zeta_e$.

Since this holds for all $s \in G$, we find $\rho \in [-C,C]$ such that $\zeta_s = \rho$ for all $s \in G$. So, claim 1 is proven. This concludes the proof of the lemma in case 2.

{\bf Proof in case 3.} Viewing $F$ as a $G$-invariant function for $G \actson X \times \R/p\Z \times Y$ and using that the functions $F(x,\cdot,y)$ have no other periodicity than given by $p \Z$, the proof in case 3 is identical to the proof in case 2.
\end{proof}

\begin{proof}[Proof of Theorem \ref{thm.type-bernoulli} and Remark \ref{rem.stable-type}]
Fix an ergodic pmp action $G \actson (Y,\eta)$. Let $F \in L^\infty(X \times \R \times Y)$ be a $G$-invariant function that generates the fixed point algebra $L^\infty(X \times \R \times Y)^G$. By Lemma \ref{lem.general-automatic-invariance-permutation-group}, the function $F$ is invariant under the Maharam extension of $\cS_G \actson (X,\mu)$.

Denote by $\cL \subset [\delta,1-\delta]$ the closed set of limit values of $\mu_g(0)$ as $g \recht \infty$. If $\cL$ is infinite, $\cL$ admits an accumulation point. So it follows from the second point of Proposition \ref{prop.III-1-permutation-action} that $\cS_G \actson (X,\mu)$ is of type III$_1$. Then, $F$ only depends on the $Y$-variable. Since $G \actson (Y,\eta)$ is ergodic, $F$ is essentially constant. It follows that $G \actson (X,\mu)$ is of stable type III$_1$ whenever $\cL$ is infinite.

Next consider the case where $\cL = \{\lambda_1,\ldots,\lambda_n\}$. Recall that we denote $\gamma(g) = \mu_g(0)$. We can then partition $G$ into $n$ infinite subsets $G = W_1 \sqcup \cdots \sqcup W_n$ such that the function $\zeta : G \recht [\delta,1-\delta]$ defined by $\zeta(g) = \lambda_i$ whenever $g \in W_i$, has the property that $\gamma(g) - \zeta(g) \recht 0$ when $g \recht \infty$. Since $c_g \in \ell^2(G)$, we have in particular that $\lim_{k \recht \infty} c_g(k) = 0$ for every $g \in G$. This implies that each $W_i$ is almost invariant.

First assume that $\gamma - \zeta \not\in \ell^2(G)$. We prove that $G \actson (X,\mu)$ is of stable type III$_1$. Take $i \in \{1,\ldots,n\}$ such that $\sum_{g \in W_i} (\gamma(g) - \lambda_i)^2 = +\infty$. By the first point of Proposition \ref{prop.III-1-permutation-action}, the permutation action
$$\cS_{W_i} \actson \prod_{g \in W_i} (\{0,1\},\mu_g)$$
is of type III$_1$. Since $\cS_{W_i} \subset \cS_G$ and $F$ is $\cS_G$-invariant, it follows in particular that $F$ does not depend on the $\R$-variable. Since $G \actson X \times Y$ is ergodic, we conclude that $F$ is essentially constant, so that $G \actson (X,\mu)$ is of stable type III$_1$.

If $\gamma - \zeta \in \ell^2(G)$, we distinguish the cases $n = 1$ and $n \geq 2$. When $n = 1$, we find that $c$ is a coboundary and $\mu \sim \nu^G$ where $\nu(0) = \lambda_1$, so that $G \actson (X,\mu)$ is of stable type II$_1$.

If $n \geq 2$, we find that $c$ is not a coboundary, but that $c$ is cohomologous to a $1$-cocycle in $\Zai(G,\ell^2(G))$. Replacing $\mu_g(0)$ by $\zeta(g)$, we may assume that $\gamma = \zeta$. Define the subgroup $\Lambda \subset \R$ by \eqref{eq.group-Lambda}. By the second point of Proposition \ref{prop.III-1-permutation-action}, $\exp(\Lambda)$ is a subset of the ratio set of $\cS_G \actson (X,\mu)$. So if $\Lambda \subset \R$ is dense, we get that $\cS_G \actson (X,mu)$ is of type III$_1$ and conclude that $G \actson (X,\mu)$ is of stable type III$_1$.

If $\Lambda = a \Z$ for some $a > 0$, we conclude that $F(x,t+a,y) = F(x,t,y)$ for a.e.\ $(x,t,y) \in X \times \R \times Y$. From the definition of $\Lambda$, it also follows that $\be(\si,x) \in a \Z$ for all $\si \in \cS_G$, $x \in X$. Since $F$ is invariant under the Maharam extension of $\cS_G \actson (X,\mu)$, we conclude that $F$ does not depend on the $X$-variable. Since $\Lambda = a \Z$, it follows from \eqref{eq.essential-algebra} that $\al(g,\tau_s(x)) \in \al(g,x) + a \Z$ for all $g,s \in G$ and a.e.\ $x \in X$. So, modulo $a \Z$, the function $\al(g,x)$ does not depend on $x$. A direct computation then gives that $\al(g,x) \in \Om(g) + a \Z$, where the group homomorphism $\Om$ is defined by \eqref{eq.group-hom-Om}. Altogether we conclude that $L^\infty(X \times \R \times Y)^G$ is precisely given by the functions in $L^\infty(\R/a \Z \times Y)$ that are invariant under the pmp action $G \actson \R/a\Z \times Y$ given by $g \cdot (t,y) = (t + \Om(g),g \cdot y)$.

When $\Om(G) \subset a \Z$, it follows that $L^\infty(X \times \R \times Y)^G = 1 \ot L^\infty(\R/a\Z) \ot 1$ for any choice of ergodic pmp action, so that $G \actson (X,\mu)$ is of stable type III$_\lambda$ with $\lambda = \exp(-a)$.

When $\Om(G) \not\subset a\Z$, we first take $Y$ to be one point. If $a\Z + \Om(G)$ is dense in $\R$, it follows that $G \actson (X,\mu)$ is of type III$_1$. If $a\Z + \Om(G) = b \Z$ for some $b > 0$, it follows that $G \actson (X,\mu)$ is of type III$_\lambda$ with $\lambda = \exp(-b)$. The possible types of $G \actson X \times Y$ that may arise when varying $G \actson (Y,\eta)$ can be found precisely as in \cite[Proposition 7.3]{VW17}.
\end{proof}

\begin{remark}\label{rem.ends-groups}
A countable group $G$ is said to have \emph{more than one end} if there exists an almost invariant subset $W \subset G$ such that both $W$ and $G \setminus W$ are infinite. By Stallings' theorem and its version for groups that are not necessarily finitely generated (see \cite[Theorem IV.6.10]{DD89}), the groups with more than one end are precisely the following groups.
\begin{itemlist}
\item The virtually cyclic groups.
\item The infinite, locally finite groups.
\item The amalgamated free product groups $A *_C B$ with $C$ finite, $C < A$ and $C < B$ proper subgroups and $[A:C]+[B:C] \geq 5$.
\item The HNN extensions $\HNN(A,C,\al)$ with $C$ finite, $C < A$ a proper subgroup and $\al : C \recht A$ an injective group homomorphism.
\end{itemlist}
Recall that $\HNN(A,C,\al)$ is defined as the group generated by $A$ and an element $t$, satisfying the relation $t^{-1} a t = \al(a)$ for all $a \in C$. Also note that if $C$ is finite and $C < A$, $C < B$ both have index $2$, then $A *_C B$ is virtually cyclic. Similarly, when $C$ is finite and $C=A$, the HNN extension $\HNN(A,C,\al)$ is virtually cyclic.
\end{remark}

Whenever $G$ is a countable group, $W \subset G$ is an almost invariant subset and $\lambda \in (0,1)$, the Bernoulli action $G \actson \prod_{g \in G} (\{0,1\},\mu_g)$ with
\begin{equation}\label{eq.candidate-type-III-lambda}
\mu_g(0) = \frac{\lambda}{1+\lambda} \;\;\text{if $g \in W$, and}\;\; \mu_g(0) = \frac{1}{1+\lambda} \;\;\text{if $g \not\in W$,}
\end{equation}
is nonsingular and is a candidate for being of type III$_\lambda$.

However, when $G = \Z$, up to a finite subset, the only almost invariant subsets are $[0,+\infty)$ and $(-\infty,0]$. Then for any $\lambda \in (0,1)$, the above nonsingular Bernoulli action is dissipative. For the same reason, we have to rule out the virtually cyclic groups. And for the ``smallest'' amalgamated free products and HNN extensions, a certain subtlety arises.

Recall that an almost invariant subset $W \subset G$ is said to be nontrivial if both $W$ and $G \setminus W$ are infinite.

\begin{proposition}\label{prop.good-almost-invariant}
For an almost invariant subset $W \subset G$ and a $\kappa > 0$, consider the following two properties.
\begin{equation}\label{eq.properties}
|W \setminus g W| = |g W \setminus W| \quad\text{for all $g \in G$, and}\quad \sum_{g \in G} \exp(-\kappa \, |W \vartriangle g W|) = +\infty \; .
\end{equation}
\begin{enumlist}
\item If $G$ is infinite and locally finite, for every $\kappa > 0$, there exists a nontrivial almost invariant subset $W \subset G$ satisfying \eqref{eq.properties}.
\item If $G = A *_C B$ is an amalgamated free product with $C$ finite, $C < A$ and $C < B$ proper subgroups and $[A:C]+[B:C] \geq 5$, there exists a nontrivial almost invariant subset $W \subset G$ satisfying \eqref{eq.properties} for
    \begin{equation}\label{eq.condition-AFP}
    \kappa \leq \frac{\log([A:C]-1) + \log([B:C]-1)}{2 |C|} \; .
    \end{equation}
\item If $G = \HNN(A,C,\al)$ with $C$ finite, $C < A$ a proper subgroup and $\al : C \recht A$ an injective group homomorphism, there exists a nontrivial almost invariant subset $W \subset G$ satisfying \eqref{eq.properties} for
    \begin{equation}\label{eq.condition-HNN}
    \kappa \leq \frac{\log(2[A:C] - 1)}{2|C|} \; .
    \end{equation}
\end{enumlist}
\end{proposition}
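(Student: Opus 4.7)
The plan splits into the three stated cases.

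\emph{Part 1 (locally finite).} Write $G=\bigcup_n G_n$ as a strictly increasing union of finite subgroups with $G_0 = \{e\}$. Construct $W=\bigsqcup_k W_{n_k}$ along a sparse increasing sequence $n_k$, taking each $W_{n_k}$ to be a nonempty proper union of left $G_{n_k}$-cosets inside $G_{n_k+1}\setminus G_{n_k}$, with $W_n=\emptyset$ otherwise. Both $W$ and $G \setminus W$ are infinite by construction. For any $g\in G_{n_0}$ and any $n_k\geq n_0$, the left $G_{n_k}$-invariance of $W_{n_k}$ gives $gW_{n_k}=W_{n_k}$, so $W\triangle gW\subset G_{n_0}$ is finite. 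Since $g$ acts as a bijection on $G_{n_0}$, one has $|W\cap G_{n_0}|=|gW\cap G_{n_0}|$ and hence $|W\setminus gW|=|gW\setminus W|$. Using the crude bound $|W\triangle gW|\leq 2|W\cap G_{n_0}|$ for $g\in G_{n_0}$,
\[
\sum_{g\in G}\exp(-\kappa|W\triangle gW|)\ \geq\ \sum_{n_0\geq 1}\bigl(|G_{n_0}|-|G_{n_0-1}|\bigr)\,\exp\bigl(-2\kappa\,|W\cap G_{n_0}|\bigr).
\]
Writing $F_k=|W\cap G_{n_k+1}|$, note that $|W\cap G_{n_0}|=F_k$ for all $n_k<n_0\leq n_{k+1}$. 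Inductively picking $n_{k+1}$ large enough that $|G_{n_{k+1}}|\geq\max\bigl(2|G_{n_k}|,\ 2\exp(2\kappa F_k)\bigr)$ makes each dyadic block of the right-hand sum contribute at least~$1$, so the total diverges. Such a choice is possible since the cardinalities $|G_n|$ are unbounded.

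\emph{Parts 2 and 3 (amalgamated free product and HNN extension).} Use the Bass-Serre tree $T$ of the splitting, whose edge set $E$ is identified with the $G$-set $G/C$, so each edge $hC$ comprises exactly $|C|$ elements of $G$. The goal is to construct a subset $E^+\subset E$ such that $|E^+\triangle gE^+|$ is bounded linearly in the tree-distance $d_T(C,gC)$ and $\Omega_{E^+}(g):=|E^+\setminus gE^+|-|gE^+\setminus E^+|$ vanishes for every $g\in G$. Setting $W=\bigsqcup_{hC\in E^+}hC$ then yields $|W\triangle gW|=|C|\cdot|E^+\triangle gE^+|$ and, via $\Omega_{E^+}\equiv 0$, the required symmetry $|W\setminus gW|=|gW\setminus W|$. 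Nontriviality of $W$ and $G\setminus W$ follows from the infiniteness of both halves of~$T$. To enforce $\Omega_{E^+}\equiv 0$, use the valence hypothesis $[A:C]+[B:C]\geq 5$ (respectively the HNN non-degeneracy $C\subsetneq A$, forcing $[A:\alpha(C)]=[A:C]\geq 2$) to prescribe a balanced, $G$-coherent bipartition of the edges incident to each vertex, so that translation by any $g\in G$ produces exact cancellation between ``forward'' and ``backward'' contributions along the unique geodesic from $C$ to $gC$ in $T$. For the divergence of the sum, count edges at tree-distance $n$ from $C$: they grow at rate $(([A:C]-1)([B:C]-1))^{n/2}$ in the amalgamated case and $(2[A:C]-1)^n$ in the HNN case. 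Matching these exponential growth rates against $\exp(-\kappa\,|W\triangle gW|)$ via the linear bound $|W\triangle gW|\leq 2|C|\,d_T(C,gC)$ yields divergence below precisely the thresholds \eqref{eq.condition-AFP} and \eqref{eq.condition-HNN}.

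\textbf{Main obstacle.} The principal difficulty is Parts 2 and 3: producing $E^+$ with $\Omega_{E^+}\equiv 0$. A naive half-tree subset of edges does not satisfy this, because any hyperbolic element of $G$ translates such a half-tree into a proper subset of itself and so yields a nonzero net shift. The remedy is the balanced vertex-by-vertex construction enabled by the valence hypotheses; verifying both the vanishing of $\Omega_{E^+}$ and the exponential lower bound on the number of edges within tree-distance $n$ reduces to a careful tree-combinatorial analysis along the unique geodesic between $C$ and $gC$ in $T$.
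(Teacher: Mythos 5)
Your Part 1 is essentially the paper's own argument and is correct up to one slip: for $gW_{n_k}=W_{n_k}$ to hold for all $g\in G_{n_k}$ you need $W_{n_k}$ to be a union of cosets of the form $G_{n_k}h$ (these are permuted by left multiplication), not of cosets $hG_{n_k}$. With that fix, the symmetry $|W\setminus gW|=|gW\setminus W|$, the bound $|W\vartriangle gW|\le 2|W\cap G_{n_0}|$ for $g\in G_{n_0}$, and the inductive choice of $n_{k+1}$ making each block of the sum contribute at least $1$ all go through, exactly as in the paper's construction $W=\bigcup_{n\in S}(G_n\setminus G_{n-1})$.

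Parts 2 and 3, however, contain a genuine gap: the object on which everything rests, the edge set $E^+$, is never constructed. The instruction to ``prescribe a balanced, $G$-coherent bipartition of the edges incident to each vertex, so that translation by any $g$ produces exact cancellation along the geodesic'' describes what a construction should achieve, not a construction, and you then explicitly defer the verification of both $\Omega_{E^+}\equiv 0$ and the counting estimate to ``a careful tree-combinatorial analysis'' --- which is precisely the content of the proof. As stated the recipe is also troublesome: a balanced bipartition of the $[A:C]$ edges at an $A$-vertex does not exist when $[A:C]$ is odd, and the hypothesis $[A:C]+[B:C]\ge 5$ plays no role in enabling such a bipartition (it only excludes the virtually cyclic case, where the tree is a line and no exponential edge count is available). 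Finally, even taking your announced bounds at face value, in the amalgam case the combination of $|W\vartriangle gW|\le 2|C|\,d_T(C,gC)$ with $(([A:C]-1)([B:C]-1))^{n/2}$ edges at distance $n$ gives divergence only for $\kappa\le\frac{\log([A:C]-1)+\log([B:C]-1)}{4|C|}$, half the threshold \eqref{eq.condition-AFP}; the sharp statement needs $|W\vartriangle gW|$ comparable to $|C|\,d_T$, not $2|C|\,d_T$, and this must be tracked.

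For comparison, the paper's construction is explicit and short: $W$ is the set of $g$ whose reduced word $a_0b_1a_1\cdots b_na_n$ has $n\ge1$ and $a_n=e$ (resp.\ $a_0t^{\eps_1}\cdots t^{\eps_n}a_n$ with $n\ge1$, $a_n=e$). One computes on generators that $c_W(a)=0$ for $a\in A$ and $c_W(b)=1_{bC}-1_C$ for $b\in B$ (resp.\ $c_W(t)=1_{Ct}-1_C$), so for a general reduced word $c_W(g)=1_W-1_{gW}$ telescopes into $n$ pairwise orthogonal differences of indicators of $C$-cosets. This yields simultaneously $\Omega_W\equiv 0$ (each difference has zero sum) and $|W\vartriangle gW|=\|c_W(g)\|_2^2=2n|C|$ (resp.\ $\le 2n|C|$), and an exact enumeration of the reduced words of given length $n$ produces the thresholds \eqref{eq.condition-AFP} and \eqref{eq.condition-HNN}. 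If you wish to keep the Bass--Serre language, $E^+$ is just $\{gC\mid g\in W\}$ for this $W$, but the normal-form computation is still what verifies the two properties in \eqref{eq.properties}; the tree picture alone does not supply it.
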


Note that \eqref{eq.condition-AFP} automatically holds when $A$ or $B$ is infinite. Similarly, \eqref{eq.condition-HNN} automatically holds if $A$ is infinite.

Recall from \eqref{eq.group-hom-Om-W} the group homomorphism $\Om_W : G \recht \Z$ associated to an almost invariant subset $W \subset G$. The first condition in \eqref{eq.properties} means that $\Om_W(g) = 0$ for all $g \in G$.

\begin{proof}
1.\ Fix $\kappa > 0$. We construct an almost invariant subset of $G$ using the method in the proof of \cite[Theorem IV.6.10]{DD89}. Write $G$ as the union of a strictly increasing sequence of finite subgroups $G_n \subset G$. After passing to a subsequence, we may assume that
\begin{equation}\label{eq.many-elements}
|G_n \setminus G_{n-1}| \, \exp(- 2\kappa \, |G_{n-1}| ) \geq 1 \; .
\end{equation}
Now let $S \subset \{1,2,3,\ldots\}$ be any infinite subset with infinite complement. Define
$$
W = \bigcup_{n \in S} (G_n \setminus G_{n-1}) \; .
$$
The subset $W \subset G$ is almost invariant. Indeed, given $g \in G$, we can take $m$ large enough such that $g \in G_m$. Then, $g(G_n \setminus G_{n-1}) = G_n \setminus G_{n-1}$ for all $n \geq m+1$.

Fix $n \in S$ and $g \in G_n \setminus G_{n-1}$. We claim that $|W \setminus g^{-1} W| \leq |G_{n-1}|$. Whenever $m \leq n-1$, we have
$$g (G_m \setminus G_{m-1}) \subset g G_m \subset G_n \setminus G_{n-1} \subset W \; .$$
Whenever $m \geq n+1$, we have $g(G_m \setminus G_{m-1}) = G_m \setminus G_{m-1}$. So we already get that
$$W \setminus g^{-1} W \subset G_n \setminus G_{n-1} \; .$$
If $w \in G_n \setminus G_{n-1}$ and $gw \not\in W$, we must have $gw \in G_{n-1}$, because $gw \in G_n$. So, we have proven that $W \setminus g^{-1} W = g^{-1} G_{n-1}$ and the claim follows.

The claim says that $|gW \setminus W| \leq |G_{n-1}|$ for every $n \in S$ and every $g \in G_n \setminus G_{n-1}$. Then also $g^{-1} \in G_n \setminus G_{n-1}$, so that also $|W \setminus gW| \leq |G_{n-1}|$. We thus conclude that $|W \vartriangle g W| \leq 2 |G_{n-1}|$ for all $n \in S$ and all $g \in G_n \setminus G_{n-1}$. It follows that
\begin{align*}
\sum_{g \in W} \exp(-\kappa \; |W \vartriangle g W|) &= \sum_{n \in S} \sum_{g \in G_n \setminus G_{n-1}} \exp(-\kappa \; |W \vartriangle g W|) \\
&\geq \sum_{n \in S} \; |G_n \setminus G_{n-1}| \; \exp(- 2 \kappa \, |G_{n-1}|) = +\infty \; ,
\end{align*}
by using \eqref{eq.many-elements}. Since $\Om_W : G \recht \Z$ is a group homomorphism and $G$ is locally finite, we have that $\Om_W(g) = 0$ for all $g \in G$. So \eqref{eq.properties} holds.

2.\ Let $G = A *_C B$ be an amalgamated free product as in the formulation of the proposition. A word
\begin{equation}\label{eq.word-AFP}
a_0 b_1 a_1 \cdots a_{n-1} b_n a_n \quad\text{with $a_i \in A$ and $b_i \in B$,}
\end{equation}
is said to be \emph{reduced} if $b_i \in B \setminus C$ for all $i \in \{1,\ldots,n\}$ and $a_i \in A \setminus C$ for all $i \in \{1,\ldots,n-1\}$. Reduced words with $n \geq 1$ are never equal to the neutral element in $G$.

We define $W \subset G$ as the set of elements $g \in G$ that admit a reduced expression as in \eqref{eq.word-AFP} with $n \geq 1$ and $a_n = e$. One checks that $W \subset G$ is almost invariant and that the associated $1$-cocycle $c_W(g) = 1_W - 1_{gW}$ satisfies $c_W(a) = 0$ for all $a \in A$ and $c_W(b) = 1_{bC} - 1_C$ for all $b \in B$. In particular, $\Om_W(g) = 0$ for all $g \in G$.

Whenever $g$ is given by a reduced expression as in \eqref{eq.word-AFP}, we get that
$$c_W(g) = (-1_{a_0C} + 1_{a_0b_1 C}) + \cdots + (-1_{a_0 b_1 \cdots a_{n-1}C} + 1_{a_0 b_1 \cdots a_{n-1} b_n C}) \; .$$
It follows that $|W \vartriangle g W| = \|c_W(g)\|_2^2 = 2n |C|$.

Choose representatives $e \in \cA \subset A$ for $A/C$ and $e \in \cB \subset B$ for $B/C$. For a fixed $n \geq 1$, the elements of $G$ admitting a reduced expression as in \eqref{eq.word-AFP} are exactly enumerated by taking $a_0 \in \cA$, $b_i \in \cB \setminus \{e\}$ for all $i=1,\ldots,n$, $a_i \in \cA \setminus \{e\}$ for all $i=1,\ldots,n-1$ and $a_n \in A$. There are thus exactly
$$[A:C] \, ([A:C]-1)^{n-1} \, ([B:C]-1)^n \, |A|$$
such elements. So when $\kappa$ satisfies \eqref{eq.condition-AFP}, then \eqref{eq.properties} holds.

3.\ Let $G = \HNN(A,C,\al)$ be an HNN extension as in the formulation of the proposition. A word
\begin{equation}\label{eq.word-HNN}
a_0 t^{\eps_1} a_1 \cdots a_{n-1} t^{\eps_n} a_n \quad\text{with $a_i \in A$ and $\eps_i \in \{-1,1\}$,}
\end{equation}
is said to be \emph{reduced} if the following two conditions hold: if $1 \leq i \leq n-1$ and $\eps_i = -1$ and $\eps_{i+1}=1$, then
$a_i \in A \setminus C$; if $1 \leq i \leq n-1$ and $\eps_i = 1$ and $\eps_{i+1}=-1$, then $a_i \in A \setminus \al(C)$. Again, reduced words with $n \geq 1$ are never equal to the neutral element in $G$.

We define $W \subset G$ as the set of elements $g \in G$ that admit a reduced expression as in \eqref{eq.word-HNN} with $n \geq 1$ and $a_n = e$. One checks that $W \subset G$ is almost invariant and that the associated $1$-cocycle $c_W(g) = 1_W - 1_{gW}$ satisfies $c_W(a) = 0$ for all $a \in A$ and $C_W(t) = 1_{Ct} - 1_C$. In particular, $\Om_W(g) = 0$ for all $g \in G$.

Denote $C_1 = C$ and $C_{-1} = \al(C)$. Also denote by $g \cdot \xi$ the left translation by $g \in G$ of a function $\xi : G \recht \R$. Whenever $g$ is given by a reduced expression as in \eqref{eq.word-HNN}, we get that
\begin{equation}\label{eq.expression-cocycle}
\begin{split}
c_W(g) = & -1_{a_0 C_{\eps_1}} + a_0 t^{\eps_1} \cdot \bigl(1_{C_{-\eps_1}}-1_{a_1 C_{\eps_2}}\bigr) + \cdots \\ &+ a_0t^{\eps_1}\cdots a_{n-2} t^{\eps_{n-1}} \cdot \bigl(1_{C_{-\eps_{n-1}}}-1_{a_{n-1} C_{\eps_n}}\bigr) + a_0t^{\eps_1}\cdots a_{n-1} t^{\eps_n} \cdot 1_{C_{-\eps_n}} \; .
\end{split}
\end{equation}
Although an element $g \in G$ has several reduced expressions as in \eqref{eq.word-HNN}, the integer $n$ is uniquely determined by the group element. It follows that the decomposition in \eqref{eq.expression-cocycle} is orthogonal, so that $|W \vartriangle gW| = \|c_W(g)\|_2^2 \leq 2n |C|$.

Choose representatives $e \in \cA_{\eps} \subset A$ for $A/C_\eps$. For a fixed $n \geq 1$, the elements of $G$ admitting a reduced expression as in \eqref{eq.word-HNN} are exactly enumerated by taking any sequence $(\eps_1,\ldots,\eps_n)$ with $\eps_i \in \{-1,1\}$, $a_i \in \cA_{\eps_{i+1}}$ for all $i=0,\ldots,n-1$, with $a_i \neq e$ if $1 \leq i \leq n-1$ and $\eps_i \neq \eps_{i+1}$, and $a_n \in A$. To count the number of such elements, consider $(\eps_1,\ldots,\eps_n)$ as a sequence of $k_1$ times $1$, followed by $k_2$ times $-1$, etc., or as a sequence of $k_1$ times $-1$, followed by $k_2$ times $1$, etc. Write $\rho_1 = [A:C]$ and $\rho_2 = [A:C]-1$. So, the number of such elements equals
\begin{align*}
2 \, \sum_{r=1}^n \, & \, \sum_{\substack{k_1,\ldots,k_r \geq 1 \\ k_1+\cdots+k_r = n}} \; \rho_1^{k_1} \, \rho_2 \, \rho_1^{k_2-1} \, \cdots \, \rho_2 \, \rho_1^{k_r-1} \, |A|
\\ &=2 |A| \sum_{r=1}^n \; \rho_1^{n-r+1} \; \rho_2^{r-1} \;\; \#\{(k_1,\ldots,k_r) \mid k_i \geq 1 \; , \;k_1+\cdots+k_r = n\}
\\ &=2 |A| \sum_{r=1}^n \; \rho_1^{n-r+1} \; \rho_2^{r-1} \; \binom{n-1}{r-1} = 2|A|\rho_1 \, (\rho_1 + \rho_2)^{n-1} \; .
\end{align*}
So, when $\kappa$ satisfies \eqref{eq.condition-HNN}, then \eqref{eq.properties} holds.
\end{proof}

We now prove the following more precise version of Theorem \ref{thm.main-general}. By Remark \ref{rem.ends-groups} and Proposition \ref{prop.good-almost-invariant}, Theorem \ref{thm.main-general} is indeed a consequence of the following result.

\begin{theorem}\label{thm.other-version-main-thm}
Let $G$ be a countable group.
\begin{enumlist}
\item If $H^1(G,\ell^2(G)) \neq \{0\}$, then $G$ admits a nonsingular Bernoulli action $G \actson (X,\mu)$ of stable type III$_1$. If $G$ is nonamenable, $G \actson (X,\mu)$ may be chosen nonamenable in the sense of Zimmer.

\item If $G$ admits a nontrivial almost invariant subset $W \subset G$ and if $\kappa > 0$ such that \eqref{eq.properties} holds, then $G$ admits a nonsingular Bernoulli action of stable type III$_\lambda$ for any $\lambda \in (0,1)$ satisfying
    \begin{equation}\label{eq.condition-lambda}
    4 \frac{(1-\lambda)^2}{\lambda} < \kappa \; .
    \end{equation}
\end{enumlist}
\end{theorem}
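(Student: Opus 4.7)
The plan is to apply Theorem \ref{thm.type-bernoulli} together with the refined classification in Remark \ref{rem.stable-type} to explicit families of marginal measures tailored to each part.

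For part 2, given the nontrivial almost invariant $W \subset G$ and $\kappa > 0$ satisfying \eqref{eq.properties}, and $\lambda \in (0,1)$ satisfying \eqref{eq.condition-lambda}, I would set $\mu_g(0) = a$ for $g \in W$ and $\mu_g(0) = b$ for $g \notin W$, with $a,b \in (0,1)$ chosen so that $|\log(a(1-b)/(b(1-a)))| = |\log\lambda|$. The associated cocycle then equals $c_g = (a-b)(1_W - 1_{gW})$, so that $\|c_g\|_2^2 = (a-b)^2\,|W \vartriangle gW|$. The strong conservativeness criterion of Proposition \ref{prop.strong-conservative-Bernoulli} thus reduces to the existence of some $\mu \leq \kappa$ with $\mu > 4(a-b)^2/(\delta(1-\delta))$, where $\delta = \min\{a,b,1-a,1-b\}$. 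A short computation shows that hypothesis \eqref{eq.condition-lambda} is exactly what permits such a choice of $a,b$. The first half of \eqref{eq.properties}, $|W \setminus gW| = |gW \setminus W|$, gives $\Om_W \equiv 0$ and hence makes the homomorphism $\Om$ of Remark \ref{rem.stable-type} vanish identically. Since the subgroup $\Lambda$ then equals $|\log\lambda|\Z$ and $\Om(G) = \{0\} \subset \Lambda$, case~1 of that remark yields stable type III$_\lambda$.

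For part 1, the case where $G$ is infinite amenable is already covered by \cite{BK18}. Assume $G$ is nonamenable with $H^1(G,\ell^2(G)) \neq \{0\}$ and fix a $1$-cocycle $c_0 \in Z^1(G,\ell^2(G))$ that is not a coboundary. I would build a Bernoulli action by setting $\mu_g(0) = 1/2 + t \phi(g)$, where $\phi : G \to \R$ is a bounded function closely related to $c_0$ (obtained by truncating a formal primitive, so that $\phi - g\cdot\phi$ differs from $(c_0)_g$ by an $\ell^2$-term) and $t > 0$ is a small scaling parameter chosen so that $\mu_g(0)$ stays in a small neighborhood of $1/2$ and the strong conservativeness criterion of Proposition \ref{prop.strong-conservative-Bernoulli} holds. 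The associated cocycle $c$ is then cohomologous to $tc_0$. If $c_0$ can be chosen outside $\Zai(G,\ell^2(G))$ modulo coboundaries, case 3 of Theorem \ref{thm.type-bernoulli} directly produces stable type III$_1$; this already covers groups with only one end, where $\Zai = \{0\}$. Otherwise, $G$ has more than one end and the image of $\Zai$ in $H^1(G,\ell^2(G))$ covers the class of $c_0$; in this sub-case I would add to the previous construction a perturbation involving two almost invariant subsets $W_1, W_2$ (provided by Proposition \ref{prop.good-almost-invariant}) whose marginal log-ratios are rationally independent, making $\Lambda$ dense in $\R$ and again yielding stable type III$_1$ via case 1 of Remark \ref{rem.stable-type}. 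Zimmer nonamenability of the action when $G$ is nonamenable follows from weak mixing (Theorem \ref{thm.general-ergodicity}) together with the standard observation that the orbit equivalence relation dominates that of an infinite pmp Bernoulli action, so Zimmer amenability of the action would force amenability of $G$.

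The main obstacle lies in the second sub-case of part 1: one must simultaneously ensure the conservativeness criterion of Proposition \ref{prop.strong-conservative-Bernoulli}, the density of $\Lambda$ in $\R$, and the inclusion $\Om(G) \subset \Lambda$. This forces a delicate balance between the growth rate of $\|c_g\|_2$, the choice of two almost invariant sets with incommensurable marginal ratios, and the vanishing of the group homomorphisms $\Om_{W_i}$, all of which requires using the full flexibility afforded by Proposition \ref{prop.good-almost-invariant}.
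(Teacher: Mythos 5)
Your part 2 follows the paper's route: constant marginals $a$ on $W$ and $b$ on $G \setminus W$, the divergence criterion of Proposition \ref{prop.strong-conservative-Bernoulli} checked against \eqref{eq.properties}, and the type read off from Remark \ref{rem.stable-type} using that the first half of \eqref{eq.properties} forces $\Om \equiv 0$. Your normalization $|\log(a(1-b)/(b(1-a)))| = |\log\lambda|$ is realized, e.g., by $a = \sqrt{\lambda}/(1+\sqrt{\lambda})$ and $b = 1/(1+\sqrt{\lambda})$, for which $4(a-b)^2/(\delta(1-\delta)) = 4(1-\sqrt{\lambda})^2/\sqrt{\lambda} < 4(1-\lambda)^2/\lambda < \kappa$, so this part is in order. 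Part 1 also follows the paper's case division in outline, but two steps in the nonamenable case are genuinely missing or wrong.

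First, you assert that the scaling parameter $t$ can be chosen so that the criterion $\sum_{g \in G} \exp(-4\kappa\, t^2 \|(c_0)_g\|_2^2) = +\infty$ of Proposition \ref{prop.strong-conservative-Bernoulli} holds, but give no reason; this is the crux for nonamenable $G$ and is not a matter of scaling alone. The paper's argument is that $g \mapsto \|c_g\|_2^2$ is conditionally negative definite, hence $\exp(-s\|c_g\|_2^2)$ is positive definite by Schoenberg's theorem; these functions tend to $1$ pointwise as $s \recht 0$, so by nonamenability of $G$ they cannot all lie in $\ell^2(G)$, which is exactly the required divergence for $s$ small. Without this (or an equivalent) argument the construction does not get off the ground. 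Second, your justification of Zimmer nonamenability is invalid: the action has no equivalent invariant probability measure, so there is no pmp Bernoulli orbit equivalence relation being ``dominated'', and a weakly mixing nonsingular action of a nonamenable group can perfectly well be amenable in the sense of Zimmer (boundary-type actions). The paper instead invokes \cite[Lemma 5.4 and Proposition 5.3]{VW17}, and this is a second place where the smallness of the perturbation parameters is essential. Two lesser points: in the many-ended sub-case, two almost invariant sets only produce a cyclic group $\Lambda$ in \eqref{eq.group-Lambda}, so you need three distinct limit values (the paper partitions $G = W_1 \sqcup W_2 \sqcup W_3$) to make $\Lambda$ dense; and the existence of a bounded $\phi$ with $\phi - g \cdot \phi$ representing the class of $c_0$ up to coboundaries is itself a nontrivial input that the paper takes from \cite{VW17} rather than obtaining by ``truncating a formal primitive''.
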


\begin{proof}
1.\ When $G$ is an infinite amenable group, this was proven in \cite[Corollary 1.4]{BK18}, but can also be deduced as follows. By \cite[Proposition 6.8]{VW17}, we can choose $\mu_g(0) \in [1/3,2/3]$ such that $\lim_{g \recht \infty} \mu_g(0) = 1/2$ and such that $c_g(k) = \mu_k(0) - \mu_{g^{-1} k}(0)$ is a nontrivial $1$-cocycle in $H^1(G,\ell^2(G))$ with $g \mapsto \|c_g\|_2$ growing arbitrarily slowly. By Theorem \ref{thm.type-bernoulli}, we can thus make our choice such that $G \actson (X,\mu)$ is of stable type III$_1$.

If $G$ is nonamenable and $G$ has more than one end, then $G$ has infinitely many ends. We can then partition $G$ into three infinite almost invariant subsets $G = W_1 \sqcup W_2 \sqcup W_3$. Consider the function
$$\vphi_\eps : G \recht \R : \vphi_\eps(g) = \exp\bigl(- \eps \; \bigl( |W_1 \vartriangle g W_1| + |W_2 \vartriangle g W_2| + |W_3 \vartriangle g W_3|\bigr)\bigr) \; .$$
For every $i \in \{1,2,3\}$, $c_{W_i}$ is a $1$-cocycle, so that the function $g \mapsto \|c_{W_i}(g)\|_2^2 = |W_i \vartriangle g W_i|$ is conditionally of negative type. By Schoenberg's theorem (see e.g. \cite[Theorem D.11]{BO08}), for every $\eps > 0$, the functions $g \mapsto \exp\bigl(- \eps \,  |W_i \vartriangle g W_i|\bigr)$ are positive definite, as is their product $\vphi_\eps$.

When $\eps \recht 0$, we have $\vphi_\eps(g) \recht 1$ pointwise. We claim that $\vphi_\eps \not\in \ell^2(G)$ for all $\eps > 0$ small enough. Indeed, otherwise we find a sequence of square summable positive definite functions on $G$ converging to one pointwise. By \cite[Th\'{e}or\`{e}me 17]{God46}, there then also exists a sequence of finitely supported positive definite functions on $G$ converging to one pointwise, contradicting the nonamenability of $G$. So the claim is proven.

For any choice of $0 < \delta_1,\delta_2,\delta_3 < 1/2$, consider the probability measures $\mu_g$ on $\{0,1\}$ given by $\mu_g(0) = 1/2 + \delta_i$ if $g \in W_i$. Since the sets $W_i$ are almost invariant, the Bernoulli action $G \actson (X,\mu) = \prod_{g \in G} (\{0,1\},\mu_g)$ is nonsingular. The associated $1$-cocycle is given by
$$c_g = \sum_{i=1}^3 \delta_i (1_{W_i} - 1_{g W_i}) \quad\text{so that}\quad \|c_g\|_2^2 \leq 3 \sum_{i=1}^3 \delta_i^2 \, |W_i \vartriangle g W_i| \; .$$
Since $\vphi_\eps \not\in \ell^2(G)$ when $\eps > 0$ is small enough, for all $\delta_i$ small enough, the hypotheses of Theorem \ref{thm.type-bernoulli} are satisfied. Combining \cite[Lemma 5.4 and Proposition 5.3]{VW17}, for all $\delta_i$ small enough, the action $G \actson (X,\mu)$ is nonamenable in the sense of Zimmer.

Choosing $\delta_1,\delta_2,\delta_3$ such that
$$\frac{\delta_1}{1-\delta_1} \, \frac{1-\delta_2}{\delta_2} \quad\text{and}\quad \frac{\delta_1}{1-\delta_1} \, \frac{1-\delta_3}{\delta_3}$$
generate a dense subgroup of $\R_*^+$, it follows from Theorem \ref{thm.type-bernoulli} that $G \actson (X,\mu)$ is of stable type III$_1$.

Finally, if $G$ is nonamenable and $G$ has one end, all $1$-cocycles in $\Zai(G,\ell^2(G))$ are coboundary. Since $H^1(G,\ell^2(G)) \neq \{0\}$, after some rescaling, we find a nonconstant function $\gamma : G \recht [0,1]$ such that $c_g = \gamma - g \cdot \gamma$ belongs to $\ell^2(G)$ for every $g \in G$. Given $0 < \eps < 1/2$, define the probability measures $\mu_g(0) = 1/2 + \eps \gamma(g)$. Using Schoenberg's theorem as above, it follows that for $\eps > 0$ small enough, the hypotheses of Theorem \ref{thm.type-bernoulli} are satisfied and the Bernoulli action $G \actson (X,\mu)$ is nonamenable in the sense of Zimmer. By Theorem \ref{thm.type-bernoulli}, the action $G \actson (X,\mu)$ is of stable type III$_1$.

2.\ Define the probability measures $\mu_g$ on $\{0,1\}$ given by \eqref{eq.candidate-type-III-lambda}. If \eqref{eq.condition-lambda} holds, then \eqref{eq.properties} says that the hypotheses of Theorem \ref{thm.type-bernoulli} hold. So, by Theorem \ref{thm.type-bernoulli}, $G \actson (X,\mu)$ is of stable type III$_\lambda$.
\end{proof}

\end{document}